\definecolor{theme}{RGB}{170, 0, 0}
\centering\fontsize{12}{16}\bfseries} 
\DeclareMathOperator{\tw}{tw}
\DeclareMathOperator{\C}{\mathcal{C}}
\DeclareMathOperator{\bigo}{\mathcal{O}}
\renewcommand{\leq}{\leqslant}
\renewcommand{\geq}{\geqslant}
\newtheorem{theorem}{Theorem}
\newtheorem{lemma}[theorem]{Lemma}
\newtheorem*{claim}{Claim}
\newtheorem{corollary}[theorem]{Corollary}
\newtheorem{fact}[theorem]{Fact}
\newcommand{\define}[1]{{\color{theme}\textit{#1}}} 
\title{\bf\boldmath On Universal Graphs for Trees  and Treewidth $k$ Graphs}
\author{Neel Kaul and David R. Wood\footnote{Research of Wood is supported by the Australian Research Council and by NSERC.}}
\affil{School of Mathematics\\Monash University\\Melbourne, Australia\\\medskip
\texttt{nkau0020@student.monash.edu, david.wood@monash.edu}}
\begin{document}

\maketitle

\begin{abstract}
    Let $s(n)$ be the minimum number of edges in a graph that contains every $n$-vertex tree as a subgraph. Chung and Graham [\textit{J.~London Math.\ Soc.\ }1983] claim to prove that $s(n)\leq\bigo(n\log n)$. We point out a mistake in their proof. The previously best known upper bound is $s(n)\leq\bigo(n(\log n)(\log\log n)^{2})$ by Chung, Graham and Pippenger [\textit{Proc.\ Hungarian Coll.\ on Combinatorics} 1976], the proof of which is missing many crucial details. We give a fully self-contained proof of the new and improved upper bound $s(n)\leq\bigo(n(\log n)(\log\log n))$. The best known lower bound is $s(n)\geq\Omega(n\log n)$. 
    
    We generalise these results for graphs of treewidth $k$. For an integer $k\geq 1$, let $s_k(n)$ be the minimum number of edges in a graph that contains every $n$-vertex graph with treewidth $k$ as a subgraph. So $s(n)=s_1(n)$. We show 
    that $\Omega(k n\log n) \leq s_k(n) \leq \bigo(kn(\log n)(\log\log n))$.
\end{abstract}


\section{Introduction}\label{sec: intro}

Consider the following classical `universal graph' problem: given a class of graphs $\mathcal{H}$ and an integer $n$, what is the minimum number of edges in a graph that contains\footnote{A graph $U$ \define{contains} a graph $G$ if a subgraph of $U$ is isomorphic to $G$.} every $n$-vertex graph in $\mathcal{H}$? A particular case of interest is when $\mathcal{H}$ is the class of trees.

Let $s(n)$ be the minimum number of edges in a graph that contains every $n$-vertex tree. The asymptotic behaviour of $s(n)$ is (up until now) largely considered to be well-understood. An $\Omega(n\log n)$ lower bound can by seen by observing that any graph $U$ that contains every $n$-vertex tree must contain, for each $i\in\{1,\ldots,n\}$, the forest with $i$ components, each of which is a star on $\lfloor\frac{n}{i}\rfloor$ vertices. Thus, the degree sequence of $U$ lexicographically dominates $(\lfloor\frac{n}{1}\rfloor-1,\lfloor\frac{n}{2}\rfloor-1,\lfloor\frac{n}{3}\rfloor-1,\ldots)$, implying $|E(U)|\geq\Omega(n \log n)$. 
\citet{GLST23} improved this lower bound by a multiple of 2.

In a series of heavily cited papers by \citet*{chung1978trees1}, \citet*{chung1978trees2}, and \citet*{chung1983universal}, a sequence of upper bounds on $s(n)$ were proved, culminating in a $\bigo(n\log n)$ bound by \citet*{chung1983universal}, asymptotically matching the $\Omega(n \log n)$ lower bound. The first contribution of this paper is to point out a fundamental error in the proof of this upper bound in \cite{chung1983universal}. In \Cref{sec: admissible graphs}, we introduce the necessary definitions from the original paper and explain the mistake in detail. 

The best known upper bound on $s(n)$ prior to the claimed $\bigo(n\log n)$ bound is $s(n)\leq\bigo(n(\log n)(\log\log n)^{2})$, due to \citet*{chung1978trees2}. The proof of this result has three major steps. The first involves showing the existence of a certain type of separator in a tree. The second involves using these separators to recursively construct a graph $G$ that contains every tree of a certain order. The third step involves estimating the number of edges in $G$. The proof given in \cite{chung1978trees2} is missing many crucial details, in particular, details of the first step are left to the reader, and the argument for the second step is incomplete. In 
\Cref{TreeUpperBound}, we build on their methods and present a fully self-contained proof of the new and improved upper bound $s(n)\leq\bigo(n(\log n)(\log\log n))$. This is now the best known upper bound on $s(n)$.

It is an open problem to asymptotically determine $s(n)$. We know
\begin{align*}
    \Omega(n\log n)\leq s(n)\leq\bigo(n(\log n)(\log\log n))\;.
\end{align*}

We generalise these results through the notion of treewidth, which is the standard measure of how similar a graph is to a tree, and is an important parameter in structural and algorithmic graph theory; see \citep{Reed97,Bodlaender98,HW17} for surveys.

A \define{tree-decomposition} of a graph $G$ is a collection of subsets $(B_{x}:x\in V(T))$ of $V(G)$ (called \define{bags}) indexed by the vertices of a tree $T$, such that:
\begin{enumerate}
    \item[(i)]\textit{(vertex-property)} for all $v\in V(G)$, $T[\{x\in V(T):v\in B_{x}\}]$ is a nonempty subtree of $T$, and
    \item[(ii)]\textit{(edge-property)} for all $uv\in E(G)$, there exists $x\in V(T)$ such that $\{u,v\}\subseteq B_{x}$.
\end{enumerate}
The \define{width} of a tree-decomposition $(B_{x}:x\in V(T))$ is $\max_{x\in V(T)}|B_{x}|-1$. The \define{treewidth} of $G$, denoted by $\tw(G)$, is the minimum width of a tree-decomposition of $G$.

For an integer $k\geq 1$, let $s_k(n)$ be the minimum number of edges in a graph that contains every graph with $n$ vertices and treewidth at most $k$. A graph has treewidth 1 if and only if it is a forest, so $s_1(n)=s(n)$. The best previously known upper bound on $s_k(n)$ is $\bigo(k n\log^{2}n)$, as explained by \citet{Joret}. We show the following bounds (where asymptotic notation means that $n\gg k$):
\begin{align*}
\Omega(k n\log n) \leq s_k(n) \leq \bigo(kn(\log n)(\log\log n))\;.
\end{align*}
Both the lower and upper bounds here are the best known (as far as we are aware). These results are proved in \Cref{Treewidthk}. It is an open problem to asymptotically determine $s_k(n)$. 

The proofs of our bounds on $s(n)$ and $s_{k}(n)$ give explicit non-optimised constants.

\section{Admissible graphs}\label{sec: admissible graphs}

The exact assertion that \citet{chung1983universal} claim to show is the inequality:
\begin{align*}
    s(n)\leq\frac{5}{\log 4}n\log n+\bigo(n)\;.
\end{align*}
where $\log$ means $\log_{e}$. In order to explain the mistake, we first present the necessary definitions from their proof, reusing their notation and terminology word-for-word as much as possible. \Cref{fig: binary tree 4 levels,fig: binary tree recurse,fig: admissible tree,fig: subcase b 1,fig: defining G2} are copied directly from \cite{chung1983universal}. Denote by $B(k)$ the \define{binary tree with $k$ levels} (see \Cref{fig: binary tree 4 levels}).

\begin{figure}[!h]
    \centering
    \includegraphics[width=0.7\linewidth]{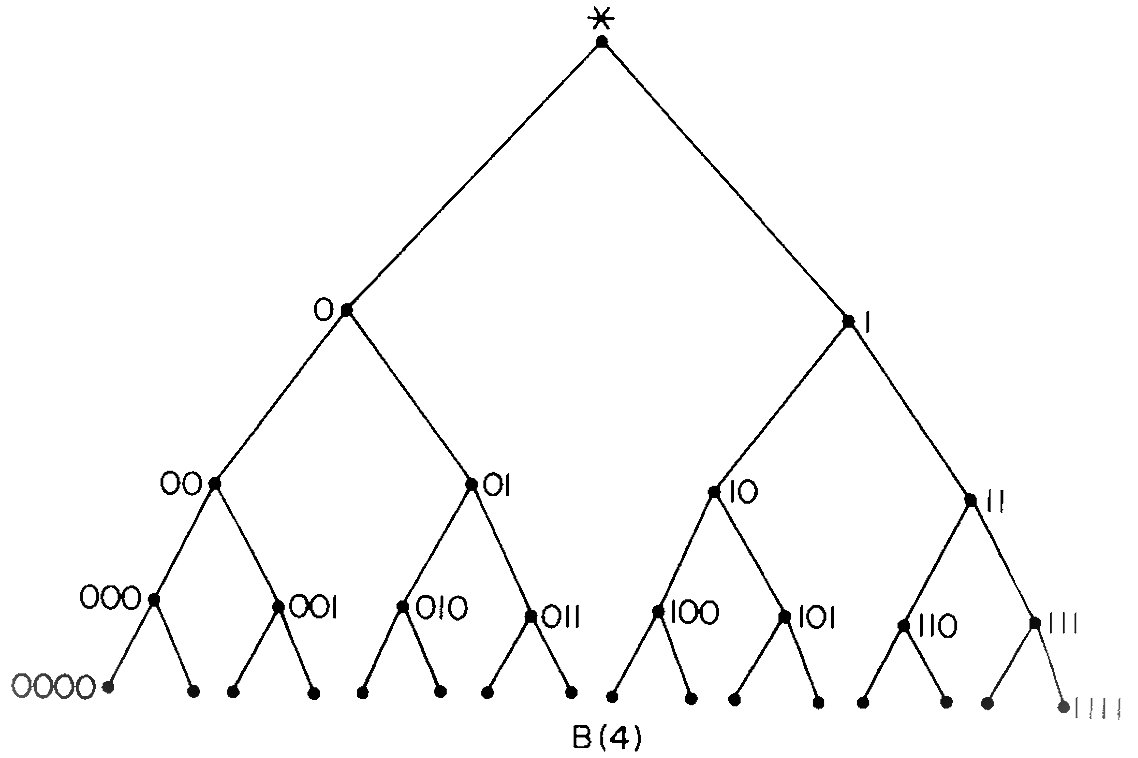}
    \caption{Binary tree with $k$ levels ($k=4$)}
    \label{fig: binary tree 4 levels}
\end{figure}

The vertices of $B(k)$ are the $\{0,1\}$-strings of length at most $k$ (and $*$ is the empty string). Denote the vertex set of $B(k)$ by $V(k)$. For $\alpha\in\{0,1\}$, denote by $B_{\alpha}(k-1)$ the copy of $B(k-1)$ rooted at vertex $\alpha$ (see \Cref{fig: binary tree recurse}).

\begin{figure}[H]
    \centering
    \includegraphics[width=0.5\linewidth]{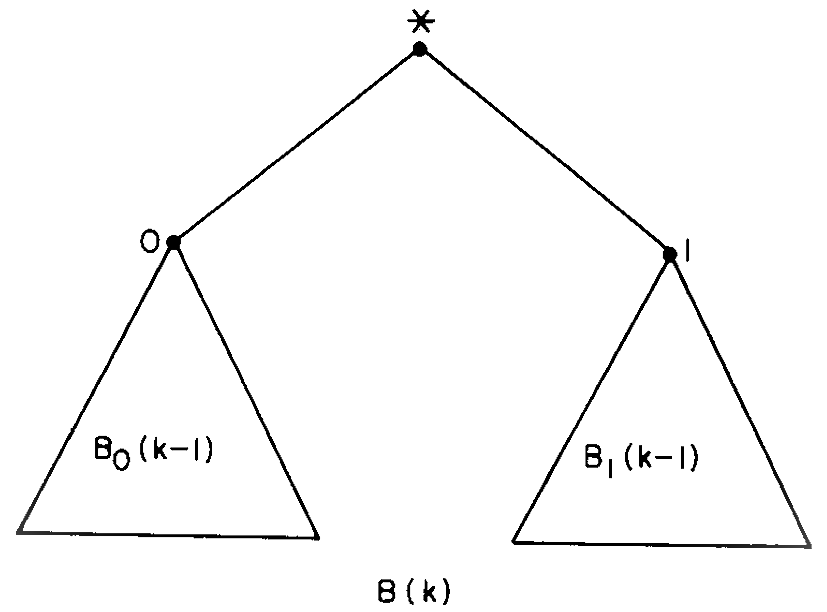}
    \caption{$B_{0}(k-1)$ and $B_{1}(k-1)$}
    \label{fig: binary tree recurse}
\end{figure}

If $\alpha=x_{1}\dots x_{j}\in V(k)$ with $j<k$, then $\alpha0$ and $\alpha1$ are the \define{left} and \define{right sons} of $\alpha$ respectively. Also, $\alpha$ is the \define{father} of $\alpha0$ and $\alpha1$, and $\alpha0$ is called the \define{left-hand brother} of $\alpha1$. A \define{descendant} of $\alpha$ is a vertex $\beta=y_{1}\dots y_{\ell}\in V(k)$ such that $j<\ell\leq k$ and for all $i\in\{1,\ldots,j\}$, $x_{i}=y_{i}$.

A subtree $A$ of $B(k)$ is said to be \define{admissible} if one of the following holds:
\begin{enumerate}
    \item[(i)]
    $V(A)=\varnothing$;
    
    \item[(ii)]
    $V(A)=\{*\}$;
    
    \item[(iii)] 
    $A$ is of the form (a) or (b) shown in \Cref{fig: admissible tree}, where $A'$ is an admissible subtree of $B(k-1)$ identified at its root to the indicated vertex.
\end{enumerate}

\begin{figure}[!t]
    \centering
    \includegraphics[width=0.75\linewidth]{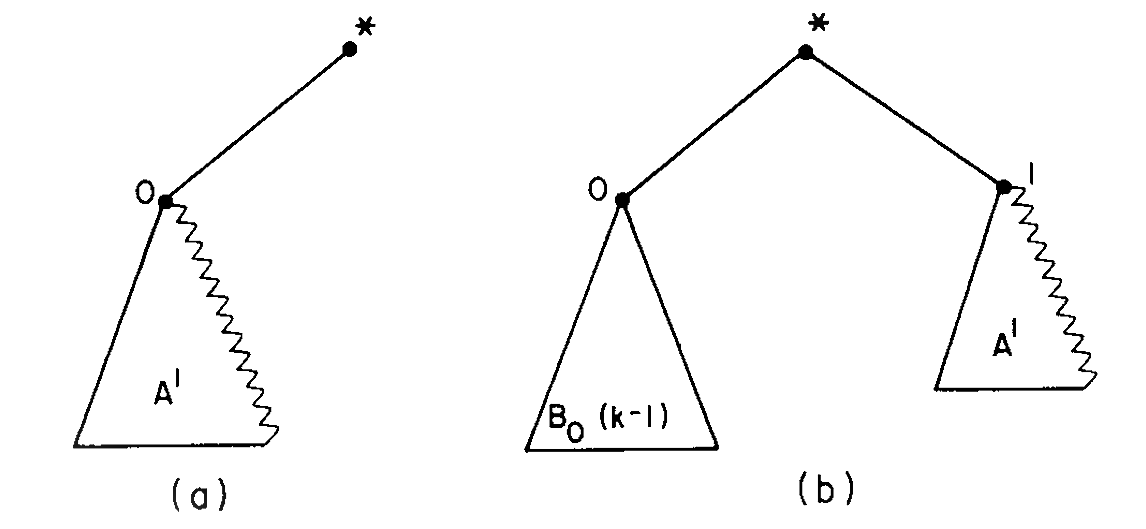}
    \caption{Structure of non-trivial admissible subtrees}
    \label{fig: admissible tree}
\end{figure}

For an admissible subtree $A$ of $B(k)$ and a vertex $v\in V(A)$, let \define{$\nu_{A}(v)$} denote the total number of descendants of $v$ in $V(A)$. Define a new graph $G(k)$ on the vertex set $V(k)$ with the following edges (see \Cref{fig: admissible graph}). For every $\alpha\in V(k)$, add an edge to $G(k)$ between $\alpha$ and:
\begin{enumerate}
    \item[(i)]
    each descendant of $\alpha$;
    
    \item[(ii)]
    the left-hand brother of $\alpha$ (if it exists) and each of its descendants;
    
    \item[(iii)] 
   the left-hand brother of the father of $\alpha$ (if it exists) and each of its descendants.
\end{enumerate}

\begin{figure}[!h]
    \centering
    \begin{tikzpicture}[/tikz/xscale=1, /tikz/yscale=1]
        \begin{scope}
            \draw[very thick] (-1,-0.5) -- (1,-0.5) -- (0,2) -- (-1,-0.5);
            \node (a) at (0,1.2) {};
            \node (b) at (0,0.9) {};
            \node (c) at (0,0.6) {};
            \node (d) at (0,0.3) {};
            \node (e) at (0,0) {};
        
            \node(index) at (0,-1.2) {(i)};
        \end{scope}
        
        \begin{scope}
            \node[draw, shape = circle, fill = black, minimum size = 0.1cm, inner sep=2pt] (alpha) at (0,2) {};
            \node (text) at (0.5,2) {$\alpha$};
        \end{scope}
        
        \begin{scope}[every edge/.style={draw=theme,very thick}]
            \path [-] (alpha) edge[bend right=50] (a);
            \path [-] (alpha) edge[bend right=60] (b);
            \path [-] (alpha) edge[bend right=70] (c);
            \path [-] (alpha) edge[bend right=80] (d);
            \path [-] (alpha) edge[bend right=90] (e);
        \end{scope}
    
        \begin{scope}[shift={(4,0)}]
            \draw[very thick] (-1,-0.5) -- (1,-0.5) -- (0,2) -- (-1,-0.5);
            \node (a) at (0,1.2) {};
            \node (b) at (0,0.9) {};
            \node (c) at (0,0.6) {};
            \node (d) at (0,0.3) {};
            \node (e) at (0,0) {};
        
            \node[draw, shape = circle, fill = black, minimum size = 0.1cm, inner sep=2pt] (beta) at (0,2) {};
        
            \node[draw, shape = circle, fill = black, minimum size = 0.1cm, inner sep=2pt] (star) at (0.5,3) {};
        
            \node(index) at (0,-1.2) {(ii)};
        \end{scope}
        
        \begin{scope}[shift={(4,0)}]
            \node[draw, shape = circle, fill = black, minimum size = 0.1cm, inner sep=2pt] (alpha) at (1,2) {};
            \node (text) at (1.5,2) {$\alpha$};
        \end{scope}
        
        \begin{scope}[every edge/.style={draw,very thick}]
            \path [-] (star) edge (alpha);
            \path [-] (star) edge (beta);
        \end{scope}
        
        \begin{scope}[every edge/.style={draw=theme,very thick}]
            \path [-] (alpha) edge (beta);
            \path [-] (alpha) edge[bend left=20] (a);
            \path [-] (alpha) edge[bend left=30] (b);
            \path [-] (alpha) edge[bend left=40] (c);
            \path [-] (alpha) edge[bend left=50] (d);
            \path [-] (alpha) edge[bend left=60] (e);
        \end{scope}
    
        \begin{scope}[shift={(8,0)}]
            \draw[very thick] (-1,-0.5) -- (1,-0.5) -- (0,2) -- (-1,-0.5);
            \node (a) at (0,1.2) {};
            \node (b) at (0,0.9) {};
            \node (c) at (0,0.6) {};
            \node (d) at (0,0.3) {};
            \node (e) at (0,0) {};
        
            \node[draw, shape = circle, fill = black, minimum size = 0.1cm, inner sep=2pt] (beta) at (0,2) {};
        
            \node[draw, shape = circle, fill = black, minimum size = 0.1cm, inner sep=2pt] (gamma) at (1,2) {};
            
            \node[draw, shape = circle, fill = black, minimum size = 0.1cm, inner sep=2pt] (star) at (0.5,3) {};
        
            \node(index) at (0,-1.2) {(iii)};
        \end{scope}
        
        \begin{scope}[shift={(8,0)}]
            \node[draw, shape = circle, fill = black, minimum size = 0.1cm, inner sep=2pt] (alpha) at (1.5,1) {};
            \node (text) at (2,1) {$\alpha$};
        \end{scope}
        
        \begin{scope}[every edge/.style={draw,very thick}]
            \path [-] (star) edge (alpha);
            \path [-] (star) edge (beta);
        \end{scope}
        
        \begin{scope}[every edge/.style={draw=theme,very thick}]
            \path [-] (alpha) edge (beta);
            \path [-] (alpha) edge[bend right=5] (a);
            \path [-] (alpha) edge[bend left=10] (b);
            \path [-] (alpha) edge[bend left=20] (c);
            \path [-] (alpha) edge[bend left=30] (d);
            \path [-] (alpha) edge[bend left=40] (e);
        \end{scope}
    \end{tikzpicture}
    
    \caption{The three types of edges in $G(k)$}
    \label{fig: admissible graph}
\end{figure}
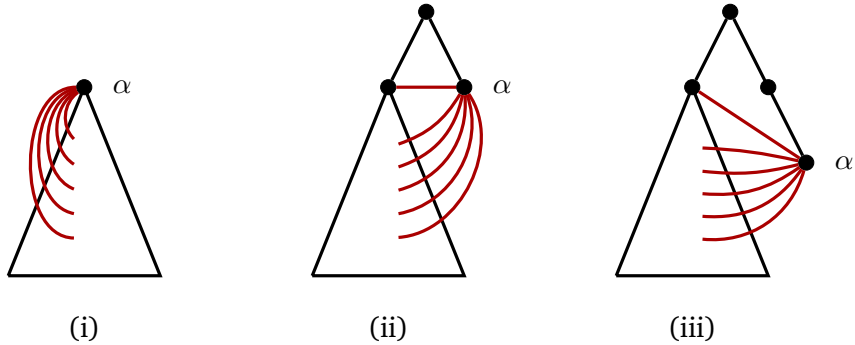

A subgraph $G$ of $G(k)$ is called \define{admissible} if it is an induced subgraph of  $G(k)$, and $V(G)$ is the vertex set of an admissible subtree of $B(k)$. For an admissible subgraph $G$ of $G(k)$ and a vertex $v\in V(G)$, let \define{$\nu_{G}(v)$} denote the total number of descendants of $v$ in $V(G)$. The main goal in \cite{chung1983universal} is to show that for all $n\geq1$, every $n$-vertex admissible graph $G$ contains every $n$-vertex tree. The following fundamental claim in the paper intends to achieve this goal. The proof of this claim is where the mistake lies.

\begin{claim}[{\cite[page 205]{chung1983universal}}]
    For every admissible graph $G$, for every tree $T$ with $|V(T)|\leq|V(G)|$ and for every $v\in V(T)$ there is an embedding\footnote{That is, an injective homomorphism.} $\lambda:V(T)\to V(G)$ such that $G-\lambda(V(T))$ is admissible and
    \begin{align*}
        \nu_{G}(\lambda(v))<|V(T)|\leq\nu_{G}(\lambda(v)^{*})\;,
    \end{align*}
    where $\lambda(v)^{*}$ denotes the father of $\lambda(v)$ in $G$ (if it exists).
\end{claim}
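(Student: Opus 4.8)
The plan is to prove the Claim by strong induction on $|V(T)|$, exploiting the recursive structure of admissible subtrees. For the base cases: if $|V(T)|=0$ take $\lambda$ to be the empty map, so that $G-\lambda(V(T))=G$ is admissible and there is no vertex $v$ to verify; if $|V(T)|=1$, send the unique vertex $v$ to a leaf $\alpha$ of the admissible subtree $A$ underlying $G$ for which $A-\alpha$ is still admissible. Such a leaf always exists --- when $V(A)=\{*\}$ take $\alpha=*$, and otherwise take the appropriate leaf of the innermost copy $A'$ appearing in the recursive description of $A$ --- and then $\nu_G(\alpha)=0<1\le\nu_G(\alpha^{*})$ holds trivially, since $\alpha$ is itself a descendant of its father $\alpha^{*}$ whenever the latter exists.

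For the inductive step, assume $|V(T)|\ge 2$. Root $T$ at $v$; let $c_1,\dots,c_d$ be the children of $v$, let $T_i$ be the subtree of $T$ rooted at $c_i$, and order them so that $|V(T_1)|\ge\dots\ge|V(T_d)|$. Since $|V(T_i)|<|V(T)|$ for every $i$, the induction hypothesis is available for each $T_i$. The first task is to choose $\alpha:=\lambda(v)$: beginning at the root $r$ of $A$, where $\nu_G(r)=|V(G)|-1\ge|V(T)|-1$, repeatedly descend to a child maximising $\nu_G(\cdot)$ and stop at the first vertex $\alpha$ with $\nu_G(\alpha)<|V(T)|$. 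By the stopping rule the father $\alpha^{*}$ of $\alpha$ then satisfies $\nu_G(\alpha^{*})\ge|V(T)|$, which is precisely the required inequality $\nu_G(\alpha)<|V(T)|\le\nu_G(\alpha^{*})$; the only exception is $\alpha=r=*$, which forces $|V(T)|=|V(G)|$ and leaves nothing to verify about a father.

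It remains to embed $T_1,\dots,T_d$ into $G-\alpha$ so that each $c_i$ lands on a neighbour of $\alpha$. By the definition of $G(k)$, the neighbourhood of $\alpha$ splits into three pools: the descendants of $\alpha$; the left-hand brother of $\alpha$ and its descendants; and the left-hand brother of the father of $\alpha$ and its descendants. Because $A$ is admissible, each pool (intersected with $V(G)$) is either a full binary subtree or decomposes canonically along the recursive description of $A$, and in either case is governed by the induction hypothesis. I would allocate the subtrees $T_i$ to the pools greedily in decreasing order of size --- checking, using the choice of $\alpha$ together with $\sum_i|V(T_i)|=|V(T)|-1<|V(G)|$, that each pool can accommodate what it is assigned --- and then apply the induction hypothesis within each pool to produce partial embeddings whose roots are adjacent to $\alpha$ and whose complements within that pool are admissible. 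Adjoining $\alpha$ yields the desired $\lambda$.

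The crux --- and the step I expect to be by far the hardest --- is verifying that $G-\lambda(V(T))$ is admissible: one must show that the unused portions of the three pools, together with the part of $A$ lying outside all of them, form the vertex set of a single admissible subtree of $B(k)$. Admissibility is a rigid global recursion --- informally, along every recursive slice the ``left part'' must be entirely present or entirely absent --- so the partial embeddings supplied by the induction hypothesis cannot be used verbatim; instead one must prescribe in advance, at every level of the recursion and in every pool, exactly how large and of what shape the consumed part is, and then check that all these local prescriptions glue together admissibly. Controlling this interaction between the multiset $\{|V(T_i)|\}$, the position of $\alpha$, and the recursive decomposition of $A$ is the heart of the matter --- and, as the present authors observe, is exactly where the original argument of Chung and Graham goes wrong.
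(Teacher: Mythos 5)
This is the Claim from Chung and Graham that the paper quotes precisely in order to demonstrate that its original proof is broken; the paper offers no proof of the Claim, and the point of Section~2 is that nobody currently has one. Your proposal mirrors the Chung--Graham strategy --- embed the subtrees $T_1,\dots,T_d$ of $T-v$ into the three ``pools'' of neighbours of $\lambda(v)$ prescribed by edge types (i)--(iii) of $G(k)$, invoking the induction hypothesis within each pool --- and then stops at exactly the step that matters: verifying that $G-\lambda(V(T))$ is admissible. You say yourself that the partial embeddings supplied by the induction hypothesis ``cannot be used verbatim,'' and that one must ``prescribe in advance, at every level of the recursion and in every pool, exactly how large and of what shape the consumed part is''; you neither supply that prescription nor verify that the local prescriptions glue into the vertex set of a single admissible subtree of $B(k)$.

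The gap you leave open is not incidental --- it is precisely the mistake the paper identifies. In Subcase~(b)~1.\ of the Chung--Graham induction one is led to regard a certain induced subgraph $G_2$ of $G$ as ``isomorphic to'' an admissible graph in order to invoke the induction hypothesis; but the would-be isomorphism must send vertex $1$ to the root of an admissible tree $A$, and when $2\leq|V(G')|<|V(B(k-2))|$ the graph $G_2$ lacks the type~(iii) edges between a son $\alpha$ of $10$ and vertex $01$ together with all of its descendants, so $G_2$ is \emph{not} admissible. Your greedy-pool scheme faces the same obstruction: the complement returned by the induction hypothesis inside one pool is admissible only relative to that pool's own binary subtree, and stitching the residues of the three pools to the untouched part of $A$ does not, in general, yield a single admissible subtree --- the cross-pool type~(iii) edges needed for admissibility are simply not present in $G$. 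Your outline therefore restates the difficulty rather than resolving it; as the paper stands, the truth of this Claim (and the $\bigo(n\log n)$ bound it was meant to deliver) remains open.
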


As presented in \cite{chung1983universal}, the proof of this claim proceeds by induction on the number of vertices in $G$ and $T$ (with priority given to $|V(G)|$). In the base case, $V(G)=\{*\}$. In the induction step, we are given an admissible graph $G$, a tree $T$ such that $|V(T)|\leq|V(G)|$, and a vertex $v\in V(T)$. Furthermore, $V(G)$ has the form (a) or (b) as shown in \Cref{fig: admissible tree}. Consider Subcase (b) 1. in the middle of page 207. Here, $G$ has the form shown in \Cref{fig: subcase b 1} (where $G'$ is an admissible subgraph of $G(k-2)$ identified at its root to vertex $10$) and $|V(T)|<|V(G)|$. 

\begin{figure}[h!]
    \centering
    \includegraphics[width=0.4\linewidth]{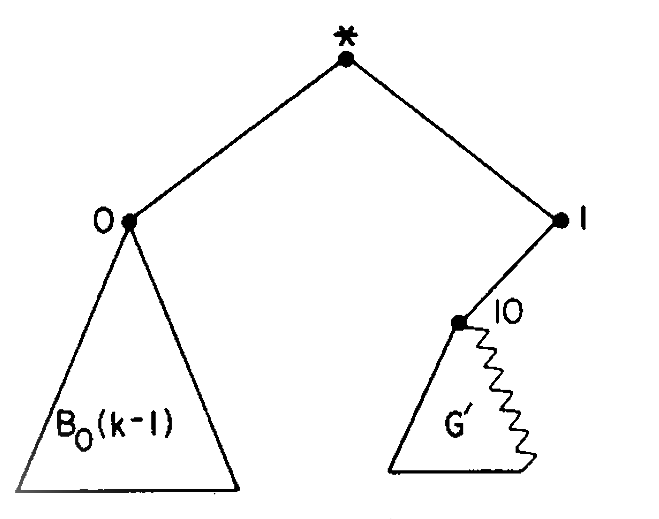}
    \caption{Structure of $G$ in Subcase (b) 1.}
    \label{fig: subcase b 1}
\end{figure}

Moreover, by the way the edges in $G(k)$ are defined, we can consider $G$ to have the form shown in \Cref{fig: defining G2}; let $G_{2}$ to be the subgraph of $G$ induced by the vertices in the circled region.

\begin{figure}[h!]
    \centering
    \includegraphics[width=0.6\linewidth]{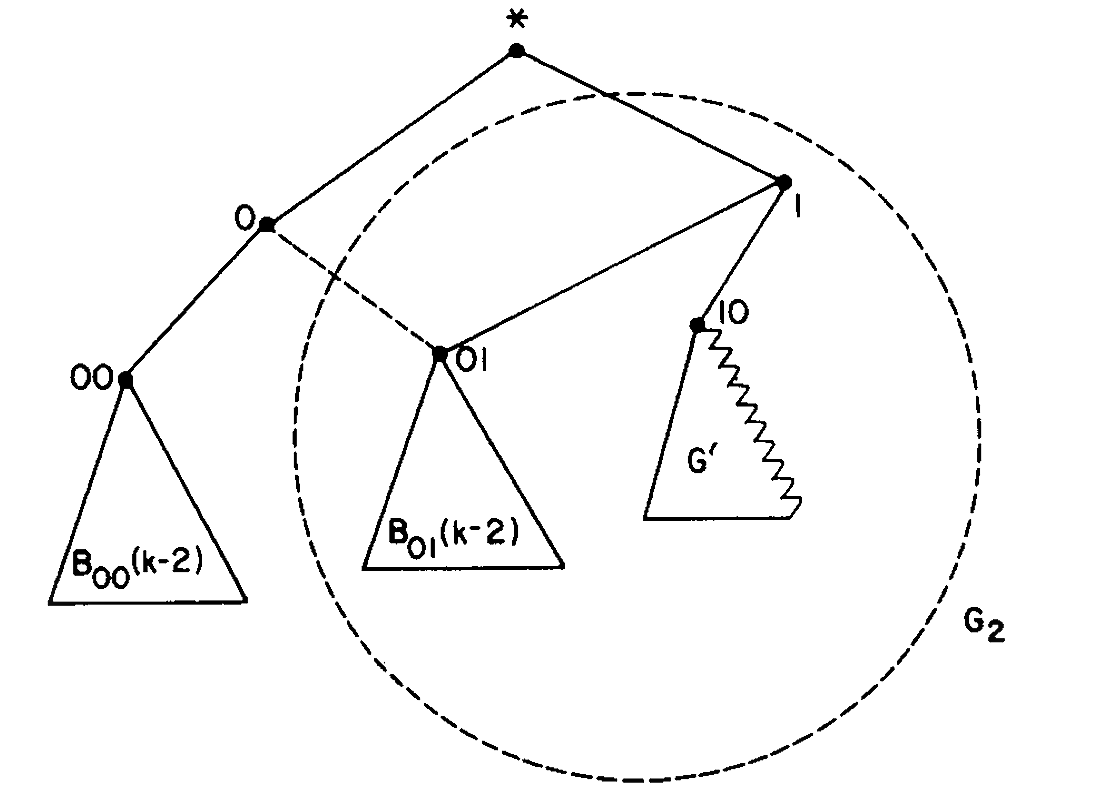}
    \caption{Defining $G_{2}$}
    \label{fig: defining G2}
\end{figure}

\citet[bottom of page 207]{chung1983universal}  claim that $G_{2}$ ``is (isomorphic to) an admissible graph". For the arguments that follow, it is vital that in the interpretation of the phrase ``isomorphic to" one assumes the isomorphism maps vertex $1$ of $G_{2}$ to the root $*$ of some admissible tree $A$. Consider the case when $2\leq|V(G')|<|V(B(k-2))|$. Then the first inequality implies that vertex $10$ has a son $\alpha$ in $G'$. By the structure of non-trivial admissible subtrees, the second inequality implies that $B_{01}(k-2)$ and $G'$ are mapped respectively to the left and right subtrees of the root of $A$. For $G_{2}$ to be admissible, by property (iii) of admissible graphs, $\alpha$ must be adjacent to the left-hand brother of the father of $\alpha$ and all of its descendants; that is, $\alpha$ is adjacent to vertex $01$ and each of its descendants. These edges are not present in $G$, and are therefore not present in $G_{2}$. Thus, we cannot conclude that $G_{2}$ is ``isomorphic to" an admissible graph, and so the proof of the claim does not follow through. This completes the explanation of the mistake.

\section{Upper Bound for Trees}
\label{TreeUpperBound}

This section presents our proof of the bound $s(n)\leq\bigo(n(\log n)(\log\log n))$. 

\subsection{$W$-subtrees of forests}\label{sec: w subtrees of forests}

For a forest $F$ and a set $W\subseteq V(F)$, a \define{$W$-subtree} of $F$ is a maximal subtree of $F$ in which the vertices in $W$ only appear as leaves (see \Cref{fig: w subtrees example}). Here, a \define{leaf} in a forest is a vertex of degree $1$. Note that every edge in $F$ lies in a unique $W$-subtree of $F$. 

\begin{figure}[!h]
    \centering
    \begin{tikzpicture}[/tikz/xscale=0.8, /tikz/yscale=0.8]
    \begin{scope}[every node/.style={circle,very thick,draw}]
        \node (a) at (0,0) {$a$};
        \node[draw=theme] (b) at (1.5,1.5) {$b$};
        \node[draw=theme] (c) at (2.5,-0.5) {$c$};
        \node (d) at (3.7,2) {$d$};
        \node[draw=theme] (e) at (5.7,1.5) {$e$};
        \node[draw=theme] (f) at (4.5,3.4) {$f$};
        \node (g) at (2,3) {$g$};
        \node (h) at (0,4) {$h$};
    \end{scope}
    
    \begin{scope}[every edge/.style={draw=black,very thick}]
        \path [-] (a) edge (b);
        \path [-] (b) edge (c);
        \path [-] (b) edge (d);
        \path [-] (d) edge (e);
        \path [-] (g) edge (h);
    \end{scope}
    
    \begin{scope}
        \node (W) at (2.25,-2.5) {$W=\{b,c,e,f\}$};
        \node (F) at (2.25,-1.7) {$F$};
    \end{scope}
    
    \begin{scope}[shift={(10,0)}, every node/.style={circle,very thick,draw}]
        \node (a) at (0-1,0) {$a$};
        \node[draw=theme] (b1) at (1.5-1,1.5) {$b$};
        \node[draw=theme] (b2) at (1.5,1.5-0.6) {$b$};
        \node[draw=theme] (b3) at (1.5,1.5+0.5) {$b$};
        \node[draw=theme] (c) at (2.5,-0.5-0.5) {$c$};
        \node (d) at (3.7,2+0.5) {$d$};
        \node[draw=theme] (e) at (5.7,1.5+0.5) {$e$};
        \node (g) at (2,3) {$g$};
        \node (h) at (0,4) {$h$};
    \end{scope}
    
    \begin{scope}[every edge/.style={draw=black,very thick}]
        \path [-] (a) edge (b1);
        \path [-] (b2) edge (c);
        \path [-] (b3) edge (d);
        \path [-] (d) edge (e);
        \path [-] (g) edge (h);
    \end{scope}
    
    \begin{scope}[shift={(10,0)}]
        \node (text) at (2.25,-2.2) {$W$-subtrees of $F$};
    \end{scope}
    \end{tikzpicture}

    \caption{A forest $F$ (left) and every $W$-subtree of $F$ (right)}
    \label{fig: w subtrees example}
\end{figure}
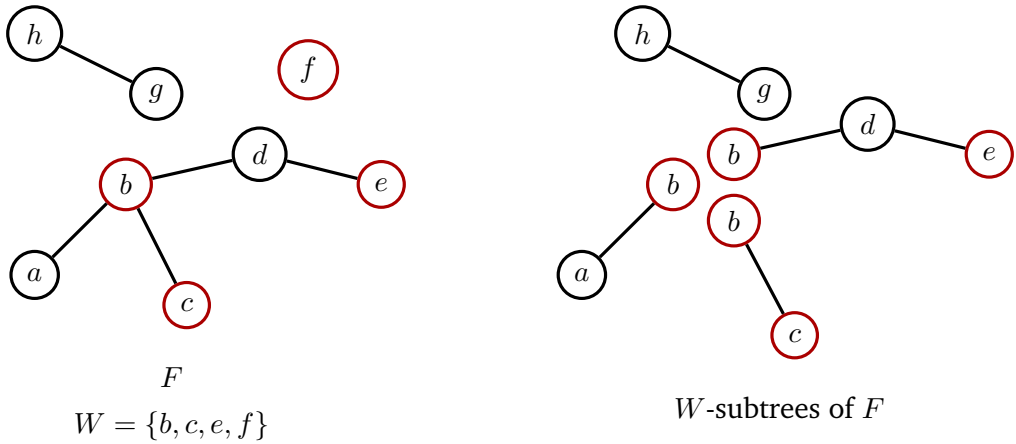

\noindent\citet{chung1978trees2} proved a slight variation of the following result about the existence of $W$-subtrees in large trees; their proof is missing details, so we include a proof for completeness.

\begin{lemma}\label{lem: w subtrees induction}
    For every integer $w\geq0$, for every real number $\alpha\geq3(\frac{3}{2})^{w}$, for every tree $T$ with $|E(T)|>\alpha$, there exists $W\subseteq V(T)$ with $|W|\leq w+1$ such that for some set $\C$ of $W$-subtrees of $T$,
    \begin{align*}
        \alpha<\sum_{T'\in\C}|E(T')|\leq\Big(1+\Big(\frac{2}{3}\Big)^{w}\Big)\alpha\;.
    \end{align*}
\end{lemma}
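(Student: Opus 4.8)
The argument is an induction on $w$. The common gadget is the following: root $T$ at a leaf and let $x$ be a \emph{deepest} vertex whose rooted subtree has more than $\alpha$ edges. Then $x$ is not a leaf, and for each child $y_i$ of $x$ the ``downward branch'' $C_i$ formed by the edge $xy_i$ together with the subtree rooted at $y_i$ has at most $\alpha+1$ edges (as the subtree rooted at $y_i$ has at most $\alpha$ edges), while $\sum_i|E(C_i)|>\alpha$. Each $C_i$ is a $\{x\}$-subtree of $T$, any union of the $C_i$ is a subforest whose components are $\{x\}$-subtrees, and $x$ is a leaf of each $C_i$. For the base case $w=0$ (so $\alpha\geq3$) we need $|W|\le1$: if some $|E(C_i)|>\alpha$ then $\alpha<|E(C_i)|\le\alpha+1\le2\alpha$ and we take $W=\{x\}$, $\C=\{C_i\}$; otherwise every $|E(C_i)|\le\alpha$, and adding $C_1,C_2,\dots$ until the running total first exceeds $\alpha$ lands the total in $(\alpha,2\alpha]$, since the total was at most $\alpha$ before the last addition and the last branch added has at most $\alpha$ edges.

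For the inductive step, fix $w\geq1$ and set $\delta:=(\tfrac23)^w\alpha$; the hypothesis $\alpha\geq3(\tfrac32)^w$ says exactly $\delta\geq3$, and the target window is $(\alpha,\alpha+\delta]$. If $|E(T)|\le\alpha+\delta$, take $W=\varnothing$, $\C=\{T\}$. Otherwise take $x$ as above, order the downward branches so that $|E(C_1)|\ge\cdots\ge|E(C_k)|$, and add them greedily until the running total $\sigma$ first exceeds $\alpha$. If $\sigma\le\alpha+\delta$ we are done with $W=\{x\}$. Otherwise $\sigma>\alpha+\delta$; let $C_j$ be the last branch added, $\sigma_0:=\sigma-|E(C_j)|\le\alpha$, and $\epsilon:=\alpha-\sigma_0\in[0,\alpha)$. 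Since $|E(C_1)|\le\alpha+1<\alpha+\delta<\sigma$ we have $j\geq2$, so $\sigma_0\ge|E(C_j)|$ and hence $2\sigma_0\geq\sigma>\alpha+\delta$, giving $\epsilon<\tfrac12(\alpha-\delta)<\tfrac23\alpha$; moreover $|E(C_j)|=\sigma-\sigma_0>\delta+\epsilon$. It now suffices to find $W'\subseteq V(C_j)$ with $|W'|\le w$ and a set $\C'$ of $W'$-subtrees of $C_j$ with total in $(\epsilon,\epsilon+\delta]$: then $W:=\{x\}\cup W'$ and $\C:=\{C_1,\dots,C_{j-1}\}\cup\C'$ work, because $x$ is a leaf of $C_j$ (so the pieces of $C_j$ remain $W$-subtrees of $T$, with $x\in W$ preventing extension at $x$), and $C_1,\dots,C_{j-1}$ are $\{x\}$-subtrees meeting $C_j$ only at $x\notin W'$.

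This residual step --- realising a total in $(\epsilon,\epsilon+\delta]$ inside the tree $C_j$, which has more than $\epsilon+\delta$ edges, with at most $w$ vertices --- is handled as follows. When $\epsilon+1<\delta$ it costs a single vertex, by running the base-case greedy inside $C_j$ with threshold $\epsilon$ in place of $\alpha$: the relevant branches then have at most $\epsilon+1<\delta$ edges, so the overshoot is below $\delta$; a short computation shows $\epsilon+1<\delta$ always holds when $w\in\{1,2\}$, settling those cases. Otherwise one re-enters the induction on $C_j$ with a first parameter $\alpha_r\geq\max(\epsilon,3(\tfrac32)^{w_r})$ and some $w_r\leq w-1$ chosen so that $(\alpha_r,(1+(\tfrac23)^{w_r})\alpha_r]\subseteq(\epsilon,\epsilon+\delta]$, which yields $\C'$ with $|W'|\le w_r+1\le w$; the bounds $\epsilon<\tfrac23\alpha$ and $\delta\geq3$ guarantee a valid choice of $w_r$ exists. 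A few boundary cases (very small $\epsilon$, or $\alpha$ small relative to $2^w$) are dealt with directly, using the elementary fact --- itself an easy induction on $t$ via the same deepest-heavy-vertex gadget --- that any tree with more than $t$ edges has, for some $W$ with $|W|\le1+\lfloor\log_2 t\rfloor$, a set of $W$-subtrees with exactly $t$ edges in total.

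The only genuinely delicate point is this last piece of arithmetic: one must verify that whenever the greedy process overshoots, the residual window $(\epsilon,\epsilon+\delta]$ is always reachable --- its bottom $\epsilon$ and relative width $\delta/\epsilon$ lie simultaneously in the range covered by an instance of the lemma with strictly smaller first parameter. This is exactly what $\alpha\geq3(\tfrac32)^w$ is calibrated for: the factor $3$ supplies the slack needed to absorb the fact that edge counts are integers while $\alpha$ and $\delta$ need not be. I expect this bookkeeping, rather than any conceptual step, to be the main obstacle.
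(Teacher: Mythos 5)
Your approach is genuinely different from the paper's, and I believe it can be made to work, but the acknowledged ``bookkeeping'' is where the two proofs really diverge and it is not small.

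The paper also inducts on $w$, but the inductive step goes \emph{top--down by subtraction} rather than bottom--up by greedy accumulation. Given $\alpha\geq3(\frac32)^{w+1}$, it first applies the induction hypothesis \emph{at the same $\alpha$} (which is legal since $\alpha\geq3(\frac32)^w$) to get a set $\C(W)$ of $W$-subtrees with total in $(\alpha,(1+(\frac23)^w)\alpha]$; if this already lands in the tighter window it stops, and otherwise it applies the base-case lemma \emph{to the forest $F=\bigcup_{T'\in\C(W)}T'$} at the single fixed scale $\beta=\frac13(\frac23)^w\alpha$ to peel off a set $\C(v)$ of $v$-subtrees with total in $(\beta,2\beta]$, and then outputs the complement $\overline{\C(v)}$. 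The parameter $\beta$ is chosen once and for all from $w$ and $\alpha$, so no data-dependent calibration is needed, and the only auxiliary fact required is the one-vertex base lemma (\Cref{lem: w subtrees base case}), which must be stated for \emph{forests} precisely because $F$ is disconnected.

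Your route instead keeps everything inside trees (so you never need a forest version), but pays for it with a recursion whose target window $(\epsilon,\epsilon+\delta]$ depends on the run of the greedy algorithm. The claim that ``the bounds $\epsilon<\frac23\alpha$ and $\delta\geq3$ guarantee a valid choice of $w_r$'' is, as stated, false: there is no single pair $(w_r,\alpha_r)$ that works in all cases. What actually happens is a three-way split. When $\epsilon+1<\delta$ (which covers $w\in\{1,2\}$), one extra vertex via the same gadget at threshold $\epsilon$ suffices. When $\epsilon\geq3(\frac32)^{w-1}$, the choice $w_r=w-1$, $\alpha_r=\epsilon$ works, using $\epsilon<\frac12\alpha$ to get $(\frac23)^{w-1}\epsilon<\frac34\delta<\delta$. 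In the remaining case $3\leq\delta\leq\epsilon+1$ and $\epsilon<3(\frac32)^{w-1}$ (which forces $w\geq3$), neither of the above applies and you must fall back to your auxiliary fact with $t=\lfloor\epsilon\rfloor+1$, using $t<3(\frac32)^{w-1}+1\leq 2^w$ for $w\geq3$ to bound $|W'|\leq w$. Each of these is individually fine, but the case analysis is real work, it relies on the auxiliary ``exactly $t$ edges with $\leq1+\lfloor\log_2 t\rfloor$ vertices'' fact that itself needs a careful induction (including a $j=1$ subcase where $|E(C_1)|=t+1$), and you would also need to verify in each branch that the pieces coming out of $C_j$ really are $W$-subtrees of $T$ (this hinges on $x$ being a leaf of $C_j$ and on $C_j$ together with $x$ being a full component of $T-x$). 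So: a correct and arguably more self-contained plan, but substantially more delicate than the paper's subtract-from-the-oversolution argument, which you should be aware avoids the window-matching entirely.
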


The proof this result requires the following lemma originally proved by \citet{chung1978trees1}; we include a proof for completeness. A \define{$v$-subtree} is a $\{v\}$-subtree.

\begin{lemma}\label{lem: w subtrees base case}
    For every real number $\alpha\geq1$, for every forest $F$ with $|E(F)|>\alpha$, there exists a vertex $v\in V(F)$ such that for some set $\C$ of $v$-subtrees of $F$,
    \begin{align*}
        \alpha<\sum_{T'\in\C}|E(T')|\leq2\alpha\;.
    \end{align*}
\end{lemma}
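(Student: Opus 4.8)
The plan is to reduce the statement to a one-dimensional greedy selection. Recall from \Cref{sec: w subtrees of forests} that every edge of $F$ lies in a unique $v$-subtree, so the $v$-subtrees of $F$ partition $E(F)$. I would first isolate the following elementary fact: if $b_1,\dots,b_N$ are positive reals with $\sum_{i}b_i>\alpha$ and $b_i\le 2\alpha$ for every $i$, then some subset of them sums to a value in $(\alpha,2\alpha]$. Indeed, if some $b_i>\alpha$ then $\{b_i\}$ already works since $b_i\le 2\alpha$; otherwise every $b_i\le\alpha$, and letting $j$ be minimal with $b_1+\dots+b_j>\alpha$ we have $b_1+\dots+b_{j-1}\le\alpha$, hence $b_1+\dots+b_j\le\alpha+b_j\le 2\alpha$. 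So it suffices to exhibit a vertex $v$ and a set $\mathcal S$ of $v$-subtrees of $F$ with $\sum_{T'\in\mathcal S}|E(T')|>\alpha$ and $|E(T')|\le 2\alpha$ for every $T'\in\mathcal S$; applying the fact to the edge counts of the members of $\mathcal S$ then produces the required family $\C$.

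To construct $v$ and $\mathcal S$ I would split into two cases. If every component of $F$ has at most $\alpha$ edges, take $v$ to be an arbitrary vertex and let $\mathcal S$ consist of all $v$-subtrees: each $v$-subtree lies inside a single component of $F$ and so has at most $\alpha\le 2\alpha$ edges, while $\sum_{T'\in\mathcal S}|E(T')|=|E(F)|>\alpha$. Otherwise some component $C$ has more than $\alpha$ edges; root $C$ at any vertex and, for $x\in V(C)$, let $e(x)$ be the number of edges in the subtree of $C$ rooted at $x$. Choose $v$ to minimise $e(v)$ subject to $e(v)>\alpha$ (possible, since the root of $C$ qualifies). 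Then every child $u$ of $v$ satisfies $e(u)\le\alpha$, as otherwise $u$ would be a better choice ($e(u)<e(v)$). Since $e(v)>\alpha\ge 1$, the vertex $v$ has at least one child; let $u_1,\dots,u_t$ be its children and let $S_i$ be the $v$-subtree consisting of $v$, the edge $vu_i$, and the subtree of $C$ rooted at $u_i$. Then $|E(S_i)|=e(u_i)+1\le\alpha+1\le 2\alpha$ (the only use of the hypothesis $\alpha\ge 1$), and since the $S_i$ pairwise meet only in $v$ and together cover exactly the edges of the subtree rooted at $v$, we get $\sum_{i=1}^{t}|E(S_i)|=e(v)>\alpha$. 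So $\mathcal S=\{S_1,\dots,S_t\}$ has the required properties.

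The one step that deserves a careful (if routine) check is the identification of $S_i$: one must verify that the maximal subtree of $F$ in which $v$ appears only as a leaf and which contains a prescribed edge $vu_i$ is exactly the subtree described, using that the component of $F-v$ containing the child $u_i$ is precisely the rooted subtree at $u_i$, and that adding any further neighbour of $v$ would destroy the leaf condition. I do not anticipate a genuine obstacle here; the argument is an instance of the standard ``find a separating vertex, then greedily accumulate its branches'' technique, with the hypothesis $\alpha\ge 1$ entering only to keep each branch within the factor-$2$ window.
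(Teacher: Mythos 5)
Your proof is correct. The abstraction of the greedy step into a standalone numerical fact is clean and correct, and both cases are handled properly: in the small-component case all $v$-subtrees have at most $\alpha$ edges, and in the heavy-component case the rooted-centroid choice of $v$ forces every downward branch to have at most $\alpha+1\le 2\alpha$ edges while their total exceeds $\alpha$. The identification of each $S_i$ as a genuine $v$-subtree of $F$ (not merely of $C$) holds, since after deleting $v$ the component containing $u_i$ is exactly the rooted subtree at $u_i$, and maximality is immediate.

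Your route, however, is genuinely different from the paper's. The paper locates the separator vertex by a ``walk'' argument: starting from a leaf $x$, it repeatedly moves to a neighbour whenever the current vertex still has an $(\alpha,\cdot)$-heavy subtree, and proves the walk terminates; at the terminal vertex every subtree is light and a minimal greedy accumulation finishes the job. You instead root the heavy component, define $e(x)$ as the number of edges in the rooted subtree at $x$, and take $v$ minimising $e(v)$ over $\{x:e(x)>\alpha\}$, which directly forces every child to satisfy $e(u)\le\alpha$. These are two standard ways to find a centroid-type separator: the paper's walk avoids introducing a rooting but requires a termination argument via a maximal path; your minimisation gives a one-line definition of $v$ and a one-line verification of the key property, at the modest cost of fixing a root. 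You also cleanly factor out the greedy accumulation (``some subset sums into $(\alpha,2\alpha]$'') as a lemma on real numbers, whereas the paper inlines that minimal-prefix argument in two places. Overall your version is a bit shorter and more modular; the paper's version more visibly mirrors the later walk argument used for tree-decompositions in \Cref{lem: z subtrees}, which is presumably why the authors chose it.
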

\begin{proof}
    If every component of $F$ has at most $\alpha$ edges, then pick any vertex $v\in V(F)$, and let $\{T_{1},\ldots,T_{j}\}$ be a minimal set of $v$-subtrees of $F$ such that $|E(T_{1})|+\cdots+|E(T_{j})|>\alpha$. If $|E(T_{1})|+\cdots+|E(T_{j})|>2\alpha$, then the minimality of $\{T_{1},\ldots,T_{j}\}$ implies
    \begin{align*}
        |E(T_{j})|=|E(T_{1})|+\cdots+|E(T_{j})|-(|E(T_{1})|+\cdots+|E(T_{j-1})|)>2\alpha-\alpha=\alpha\;,
    \end{align*}
    which contradicts that $|E(T_{j})|\leq\alpha$. Thus, $\{T_{1},\ldots,T_{j}\}$ is a desired set of $v$-subtrees of $F$.
    
    Now assume some component of $F$ has more than $\alpha$ edges, call it $T$. Let $x$ be a leaf in $T$. We may assume $|E(T)|>2\alpha$, otherwise $\{T\}$ is a desired set of $x$-subtrees of $F$. Then $|E(T-x)|>2\alpha-1\geq\alpha$. Let $y$ be the neighbour of $x$ in $T$. If every $y$-subtree of $T-x$ has at most $\alpha$ edges, then by the same argument given above, a minimal set of $y$-subtrees of $T-x$ whose union has more than $\alpha$ edges will suffice. Now assume some $y$-subtree of $T-x$ has more than $\alpha$ edges. Let $P=(v_{0},v_{1},\ldots,v_{t})$ be a maximal path in $T$ such that $v_{0}=x$, $v_{1}=y$ and for each $i\geq1$ there exists a $v_{i}$-subtree of $T-v_{i-1}$ that contains $v_{i}$ and has more than $\alpha$ edges. Let $T'$ be a $v_{t}$-subtree of $T-v_{t-1}$ that contains $v_{t}$ with $|E(T')|>\alpha$. We may assume $|E(T')|>2\alpha$, otherwise $\{T'\}$ is a desired set of $v_{t}$-subtrees of $F$. Then $|E(T'-v_{t})|>2\alpha-1\geq\alpha$. Let $v_{t+1}$ be the neighbour of $v_{t}$ in $T'$. By the maximality of $P$, every $v_{t+1}$-subtree in $T'-v_{t}$ has at most $\alpha$ edges. By the same argument given above, a minimal set of $v_{t+1}$-subtrees of $T'-v_{t}$ whose union has more than $\alpha$ edges will suffice. This proves the lemma.
\end{proof}

\begin{proof}[Proof of \Cref{lem: w subtrees induction}]
    Proceed by induction on $w$. The $w=0$ case is a direct application of \Cref{lem: w subtrees base case}. Assume the result holds for some $w\geq0$. Let $\alpha\geq3(\frac{3}{2})^{w+1}$ and let $T$ be a tree with $|E(T)|>\alpha$. Since $\alpha\geq3(\frac{3}{2})^{w}$, by the induction hypothesis there exists $W\subseteq V(T)$ with $|W|\leq w+1$ such that for some set $\C(W)$ of $W$-subtrees of $T$, $\alpha<\sum_{T'\in\C(W)}|E(T')|\leq(1+(\frac{2}{3})^{w})\alpha$. We are done if $\sum_{T'\in\C(W)}{|E(T')|}\leq(1+(\frac{2}{3})^{w+1})\alpha$, so assume the contrary. Then
    \begin{align}\label{ineq: 1 lem: w subtrees induction}
        \Big(1+\Big(\frac{2}{3}\Big)^{w+1}\Big)\alpha<\sum_{T'\in\C(W)}|E(T')|\leq\Big(1+\Big(\frac{2}{3}\Big)^{w}\Big)\alpha\;.
    \end{align}
    Let $F:=\bigcup_{T'\in\C(W)}T'$ and $\beta:=\frac{1}{3}(\frac{2}{3})^{w}\alpha\geq1$. Observe that $F$ is a forest. Since the subtrees in $\C(W)$ are edge-disjoint,
    \begin{align*}
        |E(F)|=\sum_{T'\in\C(W)}|E(T')|>\Big(1+\Big(\frac{2}{3}\Big)^{w+1}\Big)\alpha=\alpha+2\beta\geq\beta\;.
    \end{align*}
    Since $\beta\geq1$, \Cref{lem: w subtrees base case} implies that there is a vertex $v\in V(F)$ such that for some set $\C(v)$ of $v$-subtrees of $F$, 
    \begin{align}\label{ineq: 2 lem: w subtrees induction}
        \frac{1}{3}\Big(\frac{2}{3}\Big)^{w}\alpha=\beta<\sum_{T'\in\C(v)}|E(T')|\leq2\beta=\frac{2}{3}\Big(\frac{2}{3}\Big)^{w}\alpha\;.
    \end{align}
    Then \labelcref{ineq: 1 lem: w subtrees induction,ineq: 2 lem: w subtrees induction} together imply
    \begin{align}
        \alpha=\Big(1+\Big(\frac{2}{3}\Big)^{w+1}\Big)\alpha-\frac{2}{3}\Big(\frac{2}{3}\Big)^{w}\alpha&<\sum_{T'\in\C(W)}|E(T')|-\sum_{T'\in\C(v)}|E(T')|\label{ineq: 3 lem: w subtrees induction}\\
        &\leq\Big(1+\Big(\frac{2}{3}\Big)^{w}\Big)\alpha-\frac{1}{3}\Big(\frac{2}{3}\Big)^{w}\alpha\nonumber\\
        &=\Big(1+\Big(\frac{2}{3}\Big)^{w+1}\Big)\alpha\;.\nonumber
    \end{align}
    Let $\overline{\C(v)}$ be the set of all $v$-subtrees of $F$ not in $\C(v)$. Since $\{E(T'):T'\in\C(W)\}$ and $\{E(T'):T'\in\C(v)\}\cup\{E(T'):T'\in\overline{\C(v)}\}$ are both partitions of $E(F)$, 
    \begin{align*}
        |E(F)|=\sum_{T'\in\C(W)}|E(T')|=\sum_{T'\in\C(v)}|E(T')|+\sum_{T'\in\overline{\C(v)}}|E(T')|\;.
    \end{align*}
    Thus, \labelcref{ineq: 3 lem: w subtrees induction} implies
    \begin{align*}
        \alpha<\sum_{T'\in\overline{\C(v)}}|E(T')|\leq\Big(1+\Big(\frac{2}{3}\Big)^{w+1}\Big)\alpha\;.
    \end{align*}
    Hence, $\overline{\C(v)}$ is a desired set of $(W\cup\{v\})$-subtrees of $T$. The result follows by induction.
\end{proof}

\subsection{Useful functions}\label{sec: useful functions}

This section proves a number of elementary facts, which are used extensively in \Cref{sec: construction of Gwp,sec: estimating the edges in Gwp}.
For integers $w,p\geq0$, define
\begin{align*}
    &\alpha(w,p):=\frac{1}{1+(\frac{2}{3})^{w}}\Big(\frac{2+(\frac{2}{3})^{w}}{1+(\frac{2}{3})^{w}}\Big)^{p-1}\;,& &\beta(w):=3\Big(\frac{3}{2}\Big)^{w}\;,\\
    &\gamma(w,p):=\Big(\frac{2+(\frac{2}{3})^{w}}{1+(\frac{2}{3})^{w}}\Big)^{p}\;,\text{ and}& &\delta(w):=\Big(2+\Big(\frac{2}{3}\Big)^{w}\Big)\beta(w)\;.
\end{align*}
It follows immediately from the definition of $\alpha$ and $\gamma$ that
\begin{align}\label{eq: gamma and alpha relations}
    \gamma(w,p)=\Big(2+\Big(\frac{2}{3}\Big)^{w}\Big)\alpha(w,p)\qquad\text{and}\qquad\gamma(w,p-1)=\Big(1+\Big(\frac{2}{3}\Big)^{w}\Big)\alpha(w,p)\;.
\end{align}
For integers $w\geq0$, observe that for all sufficiently large integers $p$, $\alpha(w,p)\geq\frac{1}{2}(\frac{3}{2})^{p-1}\geq\beta(w)$. Define $p_{w}$ to be the smallest integer such that $\alpha(w,p_{w}+1)\geq\beta(w)$. Since $\alpha(w,p)$ is an increasing function of $p$, if $p\geq p_{w}+1$ then $\alpha(w,p)\geq\beta(w)$, and if $p\leq p_{w}$ then $\alpha(w,p)<\beta(w)$.

\begin{fact}\label{fact: pw is linear in w}
     For every integer $w\geq0$, $\log_{2}(\frac{3}{2})\cdot w+\log_{2}(3)\leq p_{w}\leq w+\log_{3/2}(9)$.
\end{fact}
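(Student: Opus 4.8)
The plan is to bound $\alpha(w,p)$ above and below by simple exponential expressions in $p$ with base-dependent constants, and then invert these bounds to pin down the threshold index $p_w$. Write $r = r(w) := (\tfrac23)^w \in (0,1]$, so that
\begin{align*}
    \alpha(w,p) = \frac{1}{1+r}\Big(\frac{2+r}{1+r}\Big)^{p-1}, \qquad \beta(w) = 3\Big(\frac32\Big)^w.
\end{align*}
The ratio $q(w) := \frac{2+r}{1+r}$ is a decreasing function of $r$, hence it satisfies $\tfrac32 \le q(w) \le 2$ for all $w\ge0$ (with $q=\tfrac32$ when $r=1$, i.e.\ $w=0$, and $q\to2$ as $w\to\infty$). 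I would record these two inequalities as the main quantitative input.

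Next I would use the defining property of $p_w$: it is the smallest integer with $\alpha(w,p_w+1)\ge\beta(w)$, and $\alpha(w,p_w) < \beta(w)$. Taking $\log_2$ of $\alpha(w,p)\ge\beta(w)$ and solving for $p$ gives
\begin{align*}
    p \;\ge\; 1 + \frac{\log_2(1+r) + \log_2\beta(w)}{\log_2 q(w)},
\end{align*}
so $p_w$ is essentially this quantity (up to rounding by at most $1$). For the lower bound on $p_w$ I would use $q(w)\le2$ in the denominator and drop the nonnegative term $\log_2(1+r)$ together with rounding effects carefully; since $\log_2\beta(w) = \log_2 3 + w\log_2(3/2)$, this yields $p_w \ge \log_2(3/2)\cdot w + \log_2 3$. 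For the upper bound I would instead use $q(w)\ge\tfrac32$ in the denominator, bound $\log_2(1+r)\le 1$, and absorb the additive constants (the $+1$, the rounding, the $\log_2 2 = 1$, and $\log_2 3$) into the single constant $\log_{3/2}(9)$; the coefficient of $w$ becomes $\frac{\log_2(3/2)}{\log_2(3/2)} = 1$, giving $p_w \le w + \log_{3/2}(9)$.

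The only mildly delicate point is bookkeeping the $\pm1$ from the fact that $p_w$ is defined via an integer threshold rather than the exact real solution, and checking that the slack constants ($\log_2 3$ on the lower side, $\log_{3/2}(9)$ on the upper side) genuinely absorb all the error terms uniformly in $w$ — in particular that $q(w)\in[\tfrac32,2]$ is used on the correct side in each direction. I expect this constant-chasing to be the main (and only real) obstacle; there is no structural difficulty, just a careful monotonicity argument for $q(w)$ and a clean inversion of the exponential inequality.
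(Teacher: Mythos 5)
Your proposal is correct and follows essentially the same route as the paper: both isolate the real threshold where $\alpha(w,\cdot)$ crosses $\beta(w)$, take logarithms, and then bound the numerator via $1\leq 1+(\frac{2}{3})^w\leq 2$ and the denominator via $\frac{3}{2}\leq\frac{2+(\frac{2}{3})^w}{1+(\frac{2}{3})^w}\leq 2$, with the ceiling accounting for the extra $+1$ in the upper bound. The paper writes $p_w=\lceil x\rceil$ where $\alpha(w,x+1)=\beta(w)$ and bounds $x$ directly, which avoids the slight off-by-one imprecision in your remark that ``$p_w$ is essentially $1+\frac{\log_2((1+r)\beta)}{\log_2 q}$''; in fact $p_w+1$ is essentially that quantity, so $p_w\approx\frac{\log_2((1+r)\beta)}{\log_2 q}$, but your subsequent derivation handles the directions of all inequalities correctly and arrives at the stated constants.
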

\begin{proof}
    Observe that $p_{w}=\lceil x\rceil$ where $\alpha(w,x+1)=\beta(w)$. Rearranging $\alpha(w,x+1)=\beta(w)$ for $x$ gives 
    \begin{align*}
        x=\frac{\log\Big((1+(\frac{2}{3})^{w})\cdot3(\frac{3}{2})^{w}\Big)}{\log\Big(\frac{2+(\frac{2}{3})^{w}}{1+(\frac{2}{3})^{w}}\Big)}\;.
    \end{align*}
    Let $a=\log((1+(\frac{2}{3})^{w})\cdot3(\frac{3}{2})^{w})$ and $b=\log\Big(\frac{2+(\frac{2}{3})^{w}}{1+(\frac{2}{3})^{w}}\Big)$. Then, $a>\log(3(\frac{3}{2})^{w})$ and $b\leq\log(2)$, implying $\log_{2}(\frac{3}{2})\cdot w+\log_{2}(3)\leq \frac{a}{b}\leq p_{w}$. Furthermore,
    \begin{align*}
        a\leq\log\Big(\Big(1+\Big(\frac{2}{3}\Big)^{0}\Big)\cdot3\Big(\frac{3}{2}\Big)^{w}\Big)=\log\Big(6\Big(\frac{3}{2}\Big)^{w}\Big)\quad\text{and}\quad b\geq\log\Big(\frac{2+(\frac{2}{3})^{0}}{1+(\frac{2}{3})^{0}}\Big)=\log\Big(\frac{3}{2}\Big)\;,
    \end{align*}
    implying $\frac{a}{b}\leq w+\log_{3/2}(6)$. Therefore, $p_{w}\leq w+\log_{3/2}(6)+1=w+\log_{3/2}(9)$.
\end{proof}

\begin{fact}\label{fact: gamma >= alpha + 3}
    For all integers $w\geq0$ and $p\geq p_{w}+1$, $\gamma(w,p)\geq\alpha(w,p)+3$.
\end{fact}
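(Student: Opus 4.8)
The plan is to reduce everything to the algebraic identities already recorded in \labelcref{eq: gamma and alpha relations} together with the defining property of $p_{w}$. First I would use the first identity in \labelcref{eq: gamma and alpha relations}, namely $\gamma(w,p)=\big(2+(\tfrac{2}{3})^{w}\big)\alpha(w,p)$, to write
\begin{align*}
    \gamma(w,p)-\alpha(w,p)=\Big(2+\Big(\tfrac{2}{3}\Big)^{w}\Big)\alpha(w,p)-\alpha(w,p)=\Big(1+\Big(\tfrac{2}{3}\Big)^{w}\Big)\alpha(w,p)\;.
\end{align*}
Since $(\tfrac{2}{3})^{w}\geq 0$, the factor $1+(\tfrac{2}{3})^{w}$ is at least $1$, so it suffices to show $\alpha(w,p)\geq 3$.

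Next I would invoke the characterisation of $p_{w}$ given just before \Cref{fact: pw is linear in w}: because $p\geq p_{w}+1$, we have $\alpha(w,p)\geq\beta(w)$. Then I would simply note $\beta(w)=3(\tfrac{3}{2})^{w}\geq 3$ for every integer $w\geq 0$, since $(\tfrac{3}{2})^{w}\geq 1$. Chaining these gives $\alpha(w,p)\geq 3$, and hence $\gamma(w,p)-\alpha(w,p)\geq\alpha(w,p)\geq 3$, which is exactly the claim.

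There is essentially no obstacle here; the only thing to be careful about is citing the correct monotonicity fact (that $p\geq p_{w}+1$ forces $\alpha(w,p)\geq\beta(w)$, which follows from $\alpha(w,\cdot)$ being increasing and the definition of $p_{w}$ as the smallest integer with $\alpha(w,p_{w}+1)\geq\beta(w)$), and making sure the direction of the bound $1+(\tfrac{2}{3})^{w}\geq 1$ is used rather than an upper bound on that quantity. (Alternatively, one could observe $\gamma(w,p)-\alpha(w,p)=\big(1+(\tfrac{2}{3})^{w}\big)\alpha(w,p)=\gamma(w,p-1)$ using the second identity in \labelcref{eq: gamma and alpha relations}, and then bound $\gamma(w,p-1)\geq\big(1+(\tfrac{2}{3})^{w}\big)\beta(w)\geq 3$; this is the same argument dressed differently.)
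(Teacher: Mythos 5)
Your proof is correct and follows essentially the same route as the paper: both reduce via the identity $\gamma(w,p)-\alpha(w,p)=\big(1+(\tfrac{2}{3})^{w}\big)\alpha(w,p)$ from \labelcref{eq: gamma and alpha relations} and then use $\alpha(w,p)\geq\beta(w)\geq 3$ (from the definition of $p_w$ and $\beta(w)=3(\tfrac{3}{2})^{w}$) together with $1+(\tfrac{2}{3})^{w}\geq 1$. The only cosmetic difference is that the paper folds these observations into a single displayed chain rather than isolating the sufficiency of $\alpha(w,p)\geq 3$ as an intermediate step.
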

\begin{proof}
    By the definition of $p_{w}$, $\alpha(w,p)\geq\beta(w)\geq3$ for all $w\geq0$ and $p\geq p_{w}+1$. By \labelcref{eq: gamma and alpha relations},
    \begin{align*}
        \gamma(w,p)-\alpha(w,p)=\Big(2+\Big(\frac{2}{3}\Big)^{w}\Big)\alpha(w,p)-\alpha(w,p)=\Big(1+\Big(\frac{2}{3}\Big)^{w}\Big)\alpha(w,p)\geq3\;.
    \end{align*}
    Therefore, $\gamma(w,p)\geq\alpha(w,p)+3$.
\end{proof}

\begin{fact}\label{fact: gamma < delta}
     For every integer $w\geq0$, $\gamma(w,p_{w})<\delta(w)$.
\end{fact}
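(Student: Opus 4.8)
The plan is to reduce the inequality $\gamma(w,p_w)<\delta(w)$ to the defining property of $p_w$, using only the algebraic identity already recorded in \labelcref{eq: gamma and alpha relations}. Recall that $\gamma(w,p)=(2+(\frac{2}{3})^{w})\alpha(w,p)$ and that $\delta(w)=(2+(\frac{2}{3})^{w})\beta(w)$ by definition. Substituting both, the claim becomes equivalent to $\alpha(w,p_w)<\beta(w)$, since the common factor $2+(\frac{2}{3})^{w}$ is positive and may be cancelled without reversing the inequality.

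Next I would invoke the definition of $p_w$ together with the remark immediately preceding \Cref{fact: pw is linear in w}: because $\alpha(w,\cdot)$ is increasing and $p_w$ is the least integer with $\alpha(w,p_w+1)\geq\beta(w)$, we have $\alpha(w,p)<\beta(w)$ for every $p\leq p_w$. Taking $p=p_w$ yields $\alpha(w,p_w)<\beta(w)$, which is exactly the reduced inequality, and the proof is complete.

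I do not expect any genuine obstacle here; the only points requiring a moment's care are that the inequality is strict (which is guaranteed, since the defining property of $p_w$ gives $\alpha(w,p_w)<\beta(w)$ strictly rather than merely $\alpha(w,p_w)\leq\beta(w)$) and that the cancelled factor $2+(\frac{2}{3})^{w}$ is indeed positive for every $w\geq0$, so the direction of the inequality is preserved throughout.
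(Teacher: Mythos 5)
Your proof is correct and follows the same route as the paper: both use the identity $\gamma(w,p)=(2+(\tfrac{2}{3})^{w})\alpha(w,p)$ together with $\delta(w)=(2+(\tfrac{2}{3})^{w})\beta(w)$ and then apply the defining property $\alpha(w,p_w)<\beta(w)$ of $p_w$.
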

\begin{proof}
    By the definition of $p_{w}$, $\alpha(w,p_{w})<\beta(w)$. By \labelcref{eq: gamma and alpha relations},
    \[\gamma(w,p_{w})=\Big(2+\Big(\frac{2}{3}\Big)^{w}\Big)\alpha(w,p_{w})<\Big(2+\Big(\frac{2}{3}\Big)^{w}\Big)\beta(w)=\delta(w)\;.\qedhere\]
\end{proof}

\begin{fact}\label{fact: delta upper bound}
     For every integer $w\geq0$, $\lceil \delta(w)\rceil\leq12(\frac{3}{2})^{w}$.
\end{fact}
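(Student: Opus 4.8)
The plan is to reduce $\delta(w)$ to a transparent closed form and then bound its ceiling crudely. Substituting $\beta(w)=3(\tfrac{3}{2})^{w}$ into $\delta(w)=(2+(\tfrac{2}{3})^{w})\beta(w)$, the cross term $(\tfrac{2}{3})^{w}(\tfrac{3}{2})^{w}$ equals $1$, so $\delta(w)=6(\tfrac{3}{2})^{w}+3$. This is the only computation of substance; everything afterwards is elementary bookkeeping with the ceiling function.

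Next I would peel off the integer summand and apply $\lceil x\rceil\leq x+1$: since $3$ is an integer,
\[
\lceil\delta(w)\rceil=\big\lceil 6(\tfrac{3}{2})^{w}\big\rceil+3\leq 6(\tfrac{3}{2})^{w}+4.
\]
Finally, I would use that $(\tfrac{3}{2})^{w}\geq 1$ for every integer $w\geq 0$, so $6(\tfrac{3}{2})^{w}\geq 6\geq 4$, whence $6(\tfrac{3}{2})^{w}+4\leq 12(\tfrac{3}{2})^{w}$; chaining the two bounds gives $\lceil\delta(w)\rceil\leq 12(\tfrac{3}{2})^{w}$.

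There is no real obstacle here: the inequality is slack and the constant $12$ is generous. The only points needing a little care are extracting the integer $3$ from inside the ceiling and checking that the leftover ``$+4$'' is absorbed, both of which are immediate from $(\tfrac{3}{2})^{w}\geq 1$.
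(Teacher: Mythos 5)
Your proof is correct, and it takes a slightly different computational route from the paper. The paper never computes $\delta(w)$ exactly: it first bounds $2+(\tfrac{2}{3})^{w}\leq 3$, getting $\lceil\delta(w)\rceil\leq\lceil 3\beta(w)\rceil$, and then uses $\lceil 3\beta(w)\rceil\leq 4\beta(w)$ (valid since $\beta(w)\geq 1$) to reach $12(\tfrac{3}{2})^{w}$. You instead observe the exact identity $\delta(w)=6(\tfrac{3}{2})^{w}+3$, extract the integer $3$ from the ceiling, and absorb the remaining $+4$ using $(\tfrac{3}{2})^{w}\geq 1$. Your closed form makes it transparent why the constant $12$ is so generous (asymptotically $\delta(w)\approx 6(\tfrac{3}{2})^{w}$), whereas the paper's two-step crude bound is a little quicker to write but hides this. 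Both are fully rigorous and of comparable length.
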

\begin{proof}
    By the definition of $\delta(w)$ and $\beta(w)$,
    \[\lceil \delta(w)\rceil=\Big\lceil\Big(2+\Big(\frac{2}{3}\Big)^{w}\Big)\beta(w)\Big\rceil\leq\lceil3\beta(w)\rceil\leq4\beta(w)=12\Big(\frac{3}{2}\Big)^{w}\;.\qedhere\]
\end{proof}

\begin{fact}\label{fact: exponential of pw}
     For every integer $w\geq0$, $2^{-p_{w}}\leq\frac{1}{3}(\frac{2}{3})^{w}$.
\end{fact}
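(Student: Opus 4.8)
The plan is to derive this directly from the lower bound on $p_w$ already established in \Cref{fact: pw is linear in w}, namely $p_w \geq \log_2(\tfrac{3}{2})\cdot w + \log_2(3)$. Since $t \mapsto 2^{-t}$ is decreasing, this lower bound on $p_w$ immediately yields an upper bound on $2^{-p_w}$.

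Concretely, I would write
\begin{align*}
    2^{-p_w} \leq 2^{-(\log_2(3/2)\cdot w + \log_2 3)} = \big(2^{\log_2(3/2)}\big)^{-w}\cdot 2^{-\log_2 3} = \Big(\frac{3}{2}\Big)^{-w}\cdot\frac{1}{3} = \frac{1}{3}\Big(\frac{2}{3}\Big)^{w}\;,
\end{align*}
which is exactly the claimed inequality. The only thing to be careful about is the direction of the inequality when exponentiating (using that $2^{-t}$ is decreasing in $t$), and the elementary identities $2^{\log_2(3/2)} = \tfrac{3}{2}$ and $2^{-\log_2 3} = \tfrac13$; there is no real obstacle here, as the substantive work was done in \Cref{fact: pw is linear in w}.

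Alternatively, if one prefers a self-contained argument not routed through \Cref{fact: pw is linear in w}, one could recall that $p_w$ is the smallest integer with $\alpha(w, p_w + 1) \geq \beta(w)$, hence $\alpha(w, p_w + 1) \geq \beta(w) = 3(\tfrac32)^w$; then bound $\alpha(w, p_w+1) \leq \tfrac12(\tfrac32)^{p_w}$ using $\tfrac{2+(2/3)^w}{1+(2/3)^w} \leq \tfrac32$ and $\tfrac{1}{1+(2/3)^w} \leq 1$ (this is the estimate noted just before the definition of $p_w$), giving $\tfrac12(\tfrac32)^{p_w} \geq 3(\tfrac32)^w$, i.e.\ $(\tfrac32)^{p_w} \geq 6(\tfrac32)^w \geq 2^{\,?}\cdots$ — but this is messier than simply quoting \Cref{fact: pw is linear in w}, so I would go with the first approach.
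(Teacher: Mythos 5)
Your first argument is exactly the paper's proof: quote the lower bound $p_{w}\geq\log_{2}(\tfrac{3}{2})\,w+\log_{2}(3)$ from \Cref{fact: pw is linear in w} and exponentiate, using that $t\mapsto 2^{-t}$ is decreasing. The alternative sketch you consider and discard is not needed; the main route is correct and identical to the paper's.
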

\begin{proof}
    By \Cref{fact: pw is linear in w}, $\log_{2}(\frac{3}{2})\cdot w+\log_{2}(3)\leq p_{w}$. Therefore,
    \[2^{-p_{w}}\leq2^{-\log_{2}(\frac{3}{2})\cdot w-\log_{2}(3)}=\frac{1}{3}\Big(\frac{2}{3}\Big)^{w}\;.\qedhere\]
\end{proof}

\subsection{The universal graph $G_{w,p}$}\label{sec: construction of Gwp}

For integers $w\geq0$ and $p\geq p_{w}$, define the graph $G_{w,p}$ recursively. Let $G_{w,p_{w}}:=K_{\lceil \delta(w)\rceil}$ and for $p\geq p_{w}+1$, let $G_{w,p}$ be the graph formed by adding all the possible edges between a complete graph $K_{w+1}$ and two disjoint copies of $G_{w,p-1}$, as shown in \Cref{fig: constructing Gwp}.

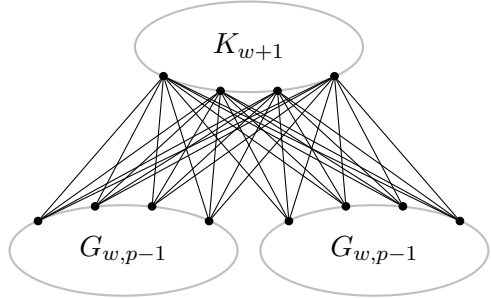
\begin{figure}[!h]
    \centering
    \begin{tikzpicture}[/tikz/xscale=1.5, /tikz/yscale=1.5]

        \begin{scope}[every node/.style={ellipse, minimum width=3cm, minimum height=1.2cm, thick,draw, lightgray}]
            \node (complete) at (0,1.5) {\textcolor{black}{$K_{w+1}$}};
            \node (G1) at (-1.1,-0.3) {\textcolor{black}{$G_{w,p-1}$}};
            \node (G2) at (1.1,-0.3) {\textcolor{black}{$G_{w,p-1}$}};
        \end{scope}
        
        \begin{scope}[every node/.style={draw, shape = circle, fill = black, minimum size = 0.1cm, inner sep=1pt}]
            \node (u1) at (-0.75,1.24) {};
            \node (u2) at (-0.25,1.11) {};
            \node (u3) at (0.25,1.11) {};
            \node (u4) at (0.75,1.24) {};
        \end{scope}
    
        \begin{scope}[every node/.style={draw, shape = circle, fill = black, minimum size = 0.1cm, inner sep=1pt}, shift={(-1.1,-0.98-0.3)}]
            \node (v1) at (-0.75,1.24) {};
            \node (v2) at (-0.25,1.37) {};
            \node (v3) at (0.25,1.37) {};
            \node (v4) at (0.75,1.24) {};
        \end{scope}
    
        \begin{scope}[every node/.style={draw, shape = circle, fill = black, minimum size = 0.1cm, inner sep=1pt}, shift={(1.1,-0.98-0.3)}]
            \node (w1) at (-0.75,1.24) {};
            \node (w2) at (-0.25,1.37) {};
            \node (w3) at (0.25,1.37) {};
            \node (w4) at (0.75,1.24) {};
        \end{scope}
    
        \begin{scope}[every edge/.style={draw}]
            \foreach \i in {1,2,3,4} {
                \foreach \j in {1,2,3,4} {
                    \path [-] (u\i) edge (v\j);
                    \path [-] (u\i) edge (w\j);
                }
            }
        \end{scope}
    \end{tikzpicture}

    \caption{Constructing $G_{w,p}$}
    \label{fig: constructing Gwp}
\end{figure}

\begin{theorem}\label{thm: construction proof for Gwp}
     For all integers $w\geq0$ and $p\geq p_{w}$, $G_{w,p}$ contains every tree with at most $\gamma(w,p)$ edges.
\end{theorem}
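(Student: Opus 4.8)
The plan is to prove the theorem by induction on $p\ge p_w$. The base case $p=p_w$ is immediate: here $G_{w,p_w}=K_{\lceil\delta(w)\rceil}$ is complete, and by \Cref{fact: gamma < delta} (which gives $\gamma(w,p_w)<\delta(w)$) every tree with at most $\gamma(w,p_w)$ edges has at most $\lfloor\gamma(w,p_w)\rfloor+1\le\lceil\delta(w)\rceil$ vertices, hence is a subgraph of $K_{\lceil\delta(w)\rceil}$.

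For the inductive step, fix $p\ge p_w+1$, assume the statement for $p-1$, and recall that $G_{w,p}$ consists of a clique $Z$ on $w+1$ vertices completely joined to two disjoint copies $H_1,H_2$ of $G_{w,p-1}$. Let $T$ be a tree with $|E(T)|\le\gamma(w,p)$. If $|E(T)|\le\gamma(w,p-1)$ then $T$ embeds into $H_1$ by the inductive hypothesis, so assume $|E(T)|>\gamma(w,p-1)$. Since $p\ge p_w+1$ implies $\alpha(w,p)\ge\beta(w)$, and $|E(T)|>\gamma(w,p-1)=(1+(\frac{2}{3})^w)\alpha(w,p)\ge\alpha(w,p)$ by \eqref{eq: gamma and alpha relations}, I can apply \Cref{lem: w subtrees induction} with $\alpha:=\alpha(w,p)$: this yields a set $W\subseteq V(T)$ with $|W|\le w+1$ and a set $\mathcal{C}$ of $W$-subtrees of $T$ with $\alpha(w,p)<\sum_{T'\in\mathcal{C}}|E(T')|\le\gamma(w,p-1)$. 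Let $\mathcal{D}$ be the set of all other $W$-subtrees of $T$, and set $T_1:=\bigcup_{T'\in\mathcal{C}}T'$ and $T_2:=\bigcup_{T'\in\mathcal{D}}T'$.

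The embedding $\lambda$ of $T$ into $G_{w,p}$ will send $W$ injectively into $Z$ (possible as $|W|\le w+1=|V(Z)|$), the non-$W$ vertices of $T_1$ into $H_1$, and the non-$W$ vertices of $T_2$ into $H_2$. Since every edge of $T$ lies in a unique $W$-subtree, $E(T_1)$ and $E(T_2)$ partition $E(T)$; and since two distinct $W$-subtrees can meet only in $W$ (an easy consequence of maximality), $V(T_1)\cap V(T_2)\subseteq W$ while $V(T_1)\cup V(T_2)=V(T)$. Hence the three target vertex sets are pairwise disjoint, $\lambda$ is injective, and every edge of $T$ has either both endpoints in $W$ (mapped inside the clique $Z$), exactly one endpoint in $W$ (mapped across the complete join from $Z$ to the relevant $H_i$), or both endpoints outside $W$ in a common $T_i$. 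It therefore remains to embed each forest $T_i-W$ into $H_i\cong G_{w,p-1}$.

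The one genuine calculation, and the step I expect to be the main obstacle, is bounding the \emph{number of vertices} of $T_i-W$: this forest can have many more components than $T_i$ (take $T$ to be a star), so the naive edge bound does not suffice on its own. The resolution is that $|E(T)|>\gamma(w,p-1)>0$ forces $W\neq\varnothing$, and then maximality forces every $W$-subtree to contain a vertex of $W$; hence every component of $T_i$ contains a vertex of $W\cap V(T_i)$, and distinct components contain distinct ones, so $T_i$ has at most $|W\cap V(T_i)|$ components. As $T_i$ is a forest, $|V(T_i-W)|=|V(T_i)|-|W\cap V(T_i)|=|E(T_i)|+(\text{components of }T_i)-|W\cap V(T_i)|\le|E(T_i)|$. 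Since $|E(T_1)|=\sum_{T'\in\mathcal{C}}|E(T')|\le\gamma(w,p-1)$ and $|E(T_2)|=|E(T)|-|E(T_1)|<\gamma(w,p)-\alpha(w,p)=\gamma(w,p-1)$ by \eqref{eq: gamma and alpha relations}, each $T_i-W$ is a forest on at most $\gamma(w,p-1)$ vertices, hence a subgraph of a tree with at most $\gamma(w,p-1)$ edges, which embeds into $H_i$ by the inductive hypothesis. Assembling the three partial embeddings yields $\lambda$ and completes the induction. Apart from this count, the remaining things to check are a few degenerate configurations ($T$ a single vertex, $W=\varnothing$, all of $W$ being leaves of $T$, or $\mathcal{D}=\varnothing$), each of which is either trivial or excluded once $|E(T)|>\gamma(w,p-1)$.
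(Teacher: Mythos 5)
Your proof is correct and follows essentially the same approach as the paper: induction on $p$ with base case $K_{\lceil\delta(w)\rceil}$, and the inductive step invoking \Cref{lem: w subtrees induction} with $\alpha=\alpha(w,p)$ to split $T$ into two pieces routed to the two copies of $G_{w,p-1}$, with $W$ sent to $K_{w+1}$. Two small refinements over the paper's write-up: your case split on whether $|E(T)|\leq\gamma(w,p-1)$ cleanly avoids the paper's step of padding $T$ with extra vertices and edges so that $\alpha(w,p)+1\leq|E(T)|$, and you spell out the justification for the vertex count $|V(T_i)\setminus W|\leq|E(T_i)|$ (each component of a union of $W$-subtrees of a connected $T$ with $W\neq\varnothing$ contains a $W$-vertex), whereas the paper presents the corresponding inequality ``$|E(T_i)|\leq|E(F_i)|$'' as a bare observation.
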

\begin{proof}
    Proceed by induction on $p\geq p_{w}$ with $w$ fixed. If $p=p_{w}$, then \Cref{fact: gamma < delta} says $\gamma(w,p_{w})<\delta(w)$. It follows that any tree on at most $\gamma(w,p_{w})$ edges has at most $\lceil \delta(w)\rceil$ vertices, and is therefore contained in $G_{w,p_{w}}=K_{\lceil \delta(w)\rceil}$. Now assume $p\geq p_{w}+1$ and the claim holds for all lower values of $p$. Let $T$ be a tree on at most $\gamma(w,p)$ edges. \Cref{fact: gamma >= alpha + 3} says $\gamma(w,p)\geq\alpha(w,p)+3$, so we can add vertices and edges to $T$ if necessary (without creating cycles and maintaining connectivity) so that $\alpha(w,p)+1\leq|E(T)|\leq\gamma(w,p)$. By the definition of $p_{w}$, we have $\alpha(w,p)\geq\beta(w)=3(\frac{3}{2})^{w}$. By \Cref{lem: w subtrees induction}, there exists a set $W\subseteq V(T)$ with $|W|\leq w+1$ such that some set $\C$ of $W$-subtrees of $T$ satisfies:
    \begin{align*}
        \alpha(w,p)<\sum_{T'\in\C}|E(T')|\leq\Big(1+\Big(\frac{2}{3}\Big)^{w}\Big)\alpha(w,p)=\gamma(w,p-1)\;,
    \end{align*}
    where the last equality follows from \labelcref{eq: gamma and alpha relations}. Let $F_{1}$ be the union of the $W$-subtrees in $\C$, and let $F_{2}$ be the union of the $W$-subtrees not in $\C$. Since the subtrees in $\C$ are edge-disjoint, the above inequality implies $\alpha(w,p)<|E(F_{1})|\leq\gamma(w,p-1)$. Furthermore, since $\{E(F_{1}),E(F_{2})\}$ is a partition of $E(T)$, 
    \begin{align*}
        |E(F_{2})|=|E(T)|-|E(F_{1})|\leq\gamma(w,p)-\alpha(w,p)&=\Big(2+\Big(\frac{2}{3}\Big)^{w}\Big)\;\alpha(w,p)-\alpha(w,p)\\
        &=\gamma(w,p-1)\;,
    \end{align*}
    where the last two equalities follow from \labelcref{eq: gamma and alpha relations}. For each $i\in\{1,2\}$ let $T_{i}$ be the tree obtained from $F_{i}-W$ by adding edges if necessary to join up components. Observe that $|E(T_{i})|\leq|E(F_{i})|\leq\gamma(w,p-1)$. By the induction hypothesis, $T_{i}$ is contained in $G_{w,p-1}$. Map $T_{i}$ to the $i$-th copy of $G_{w,p-1}$ in $G_{w,p}$. Map $W$ to the $K_{w+1}$ in $G_{w,p}$. Since the $K_{w+1}$ is complete to each copy of $G_{w,p-1}$ in $G_{w,p}$, this shows that $T$ is contained in $G_{w,p}$. The claim holds by induction. 
\end{proof}

\label{sec: estimating the edges in Gwp}

We now estimate $|V(G_{w,p})|$ and $|E(G_{w,p})|$. Let $m:=p-p_{w}$.

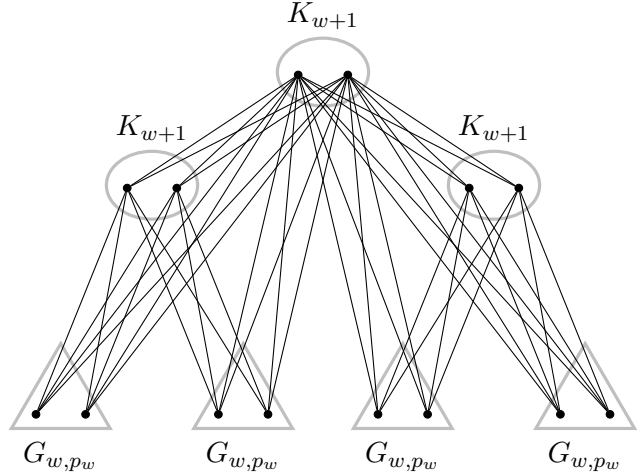
\begin{figure}[!h]
    \centering
    \begin{tikzpicture}[/tikz/xscale=1.5, /tikz/yscale=1.5]
    
        \begin{scope}[every node/.style={fill=black, shape = circle, minimum size = 0.1cm, inner sep=1pt, draw},shift={(0,3)}]
            \node (u2) at (-0.87/4,-0.5/4) {};
            \node (u3) at (0.87/4,-0.5/4) {};
        \end{scope}
    
        \begin{scope}[every node/.style={fill=black, shape = circle, minimum size = 0.1cm, inner sep=1pt, draw},shift={(-1.5,2)}]
            \node (v2) at (-0.87/4,-0.5/4) {};
            \node (v3) at (0.87/4,-0.5/4) {};
        \end{scope}
    
        \begin{scope}[every node/.style={fill=black, shape = circle, minimum size = 0.1cm, inner sep=1pt, draw},shift={(1.5,2)}]
            \node (w2) at (-0.87/4,-0.5/4) {};
            \node (w3) at (0.87/4,-0.5/4) {};
        \end{scope}
    
        \begin{scope}[every node/.style={fill=black, shape = circle, minimum size = 0.1cm, inner sep=1pt, draw},shift={(0.7,0)}]
            \node (x2) at (-0.87/4,-0.5/4) {};
            \node (x3) at (0.87/4,-0.5/4) {};
            \node (x5) at (-0.87/4+1.6,-0.5/4) {};
            \node (x6) at (0.87/4+1.6,-0.5/4) {};
        \end{scope}
    
        \begin{scope}[every node/.style={fill=black, shape = circle, minimum size = 0.1cm, inner sep=1pt, draw},shift={(-0.7,0)}]
            \node (y2) at (-0.87/4,-0.5/4) {};
            \node (y3) at (0.87/4,-0.5/4) {};
            \node (y5) at (-0.87/4-1.6,-0.5/4) {};
            \node (y6) at (0.87/4-1.6,-0.5/4) {};
        \end{scope}
    
        \begin{scope}[every node/.style={ellipse, minimum width=1.2cm, minimum height=0.9cm,very thick,draw, lightgray},shift={(0,-0.1)}]
            \node (K1) at (0,3) {};
        \end{scope}
    
        \begin{scope}[shift={(0,-0.2)}]
            \node (nameK1) at (0,3.6) {$K_{w+1}$};
        \end{scope}
    
        \begin{scope}[every node/.style={ellipse, minimum width=1.2cm, minimum height=0.9cm, very thick,draw, lightgray},shift={(0,-0.1)}]
            \node (K2) at (-1.5,2) {};
            \node (K3) at (1.5,2) {};
        \end{scope}
    
        \begin{scope}[shift={(0,-0.2)}]
            \node (nameK2) at (-1.5,2.6) {$K_{w+1}$};
            \node (nameK3) at (1.5,2.6) {$K_{w+1}$};
        \end{scope}
        
        \begin{scope}[every node/.style={regular polygon, regular polygon sides=3, minimum size=1.5cm, very thick,draw, lightgray}]
            \node (G1) at (-2.3,0) {};
            \node (G2) at (-0.7,0) {};
            \node (G3) at (0.7,0) {};
            \node (G4) at (2.3,0) {};
        \end{scope}
    
        \begin{scope}[shift={(0,0.1)}]
            \node (nameG1) at (-2.3,-0.6) {$G_{w,p_{w}}$};
            \node (nameG2) at (-0.7,-0.6) {$G_{w,p_{w}}$};
            \node (nameG3) at (0.7,-0.6) {$G_{w,p_{w}}$};
            \node (nameG4) at (2.3,-0.6) {$G_{w,p_{w}}$};
        \end{scope}

        \begin{scope}[every edge/.style={draw}]
            \foreach \i in {2,3} {
                \foreach \j in {2,3} {
                    \path [-] (u\i) edge (v\j);
                    \path [-] (u\i) edge (w\j);
                }
                \foreach \j in {2,3,5,6} {
                    \path [-] (u\i) edge (x\j);
                    \path [-] (v\i) edge (y\j);
                    \path [-] (u\i) edge (y\j);
                    \path [-] (w\i) edge (x\j);
                }
            }
        \end{scope}
    \end{tikzpicture}
    
    \caption{$G_{w,p}$ when $m=2$}
    \label{fig: example of Gwp}
\end{figure}

\begin{lemma}\label{lem: general estimate for vertices in Gwp}
    For all sufficiently large integers $w\geq0$, if $p\geq p_{w}$, then $|V(G_{w,p})|\leq5\cdot2^{p}$.
\end{lemma}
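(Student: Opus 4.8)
The plan is to unroll the recursion that defines $G_{w,p}$. Writing $m:=p-p_w$, the construction gives $|V(G_{w,p_w})| = |V(K_{\lceil\delta(w)\rceil})| = \lceil\delta(w)\rceil$, and for $p\geq p_w+1$ the graph $G_{w,p}$ consists of a $K_{w+1}$ together with two disjoint copies of $G_{w,p-1}$, so $|V(G_{w,p})| = (w+1) + 2\,|V(G_{w,p-1})|$. A routine induction on $m$ then yields
\[
|V(G_{w,p})| = 2^m\lceil\delta(w)\rceil + (w+1)(2^m-1) \leq 2^m\big(\lceil\delta(w)\rceil + w + 1\big),
\]
where the last step just uses $2^m-1 \leq 2^m$ to absorb the additive terms accumulated over the $m$ levels of the recursion.

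Next I would bound the bracketed quantity independently of $p$, using the estimates from \Cref{sec: useful functions}. By \Cref{fact: delta upper bound}, $\lceil\delta(w)\rceil \leq 12(\tfrac32)^w$; and by \Cref{fact: pw is linear in w}, $p_w \geq \log_2(\tfrac32)\cdot w + \log_2 3$, so $2^{p_w} \geq 3(\tfrac32)^w$ and hence $5\cdot 2^{p_w} \geq 15(\tfrac32)^w$. Thus it suffices to verify $12(\tfrac32)^w + w + 1 \leq 15(\tfrac32)^w$, i.e.\ $w+1 \leq 3(\tfrac32)^w$, which holds for all sufficiently large $w$. Combining the two displays gives
\[
|V(G_{w,p})| \leq 2^m\big(\lceil\delta(w)\rceil + w+1\big) \leq 2^m\cdot 5\cdot 2^{p_w} = 5\cdot 2^p,
\]
as required.

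There is no real obstacle here: the only content is solving a linear recurrence and substituting the bounds on $\delta(w)$ and $p_w$. The single point that needs a little care is that the additive $(w+1)$ contributions from the $K_{w+1}$ glued in at each of the $m$ levels do not spoil the estimate; the crude absorption $(w+1)(2^m-1)\leq (w+1)2^m$ followed by the comparison of $\lceil\delta(w)\rceil + w + 1$ against $5\cdot 2^{p_w}$ handles this, and it is precisely the need for $w+1 \leq 3(\tfrac32)^w$ — an exponential-versus-linear comparison — that makes the `sufficiently large $w$' hypothesis convenient.
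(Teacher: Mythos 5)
Your proof is correct and follows essentially the same route as the paper: both count the $2^m$ copies of $G_{w,p_w}$ and $2^m-1$ copies of $K_{w+1}$, then apply \Cref{fact: delta upper bound} and the lower bound on $p_w$ (equivalently \Cref{fact: exponential of pwk}'s tree analogue, \Cref{fact: exponential of pw}) together with an exponential-versus-linear absorption of the $w+1$ term. The only cosmetic difference is that you compare directly against $5\cdot 2^{p_w}$, whereas the paper first collects everything into $13(\tfrac32)^w\cdot 2^m$ and then converts to $\tfrac{13}{3}\cdot 2^p$.
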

\begin{proof}
    Since $G_{w,p}$ contains $2^{m}-1$ copies of $K_{w+1}$ and $2^{m}$ copies of $G_{w,p_{w}}$ (see \Cref{fig: example of Gwp}),
    \begin{align*}
        |V(G_{w,p})|\leq(2^{m}-1)\cdot|V(K_{w+1})|+2^{m}\cdot|V(G_{w,p_{w}})|<2^{m}(w+1)+2^{m}\lceil\delta(w)\rceil\;.
    \end{align*}
    When $w$ is sufficiently large, $w+1\leq(\frac{3}{2})^{w}$. By \Cref{fact: delta upper bound}, $\lceil\delta(w)\rceil\leq12(\frac{3}{2})^{w}$. Therefore,
    \begin{align*}
        |V(G_{w,p})|\leq2^{m}\cdot\Big(\frac{3}{2}\Big)^{w}+2^{m}\cdot12\Big(\frac{3}{2}\Big)^{w}=2^{m}\cdot13\Big(\frac{3}{2}\Big)^{w}\;.
    \end{align*}
    It follows from \Cref{fact: exponential of pw} that $2^{m}\leq2^{p}\frac{1}{3}(\frac{2}{3})^{w}$. Thus, $|V(G_{w,p})|\leq\frac{13}{3}\cdot2^{p}\leq5\cdot2^{p}$ as desired.
\end{proof}

For the remainder of this section, let $w:=\lceil\log_{3/2}p\rceil$ and consider $p$ large. Note that $p\approx(\frac{3}{2})^{w}$ and $p_{w}\leq w+\log_{3/2}(9)$ by \Cref{fact: pw is linear in w}, thus $p\geq p_{w}$ for all large enough $p$.

\begin{lemma}\label{lem: estimating edges in Gwp}
    $|E(G_{w,p})|\leq5\cdot 2^{p}p\log_{3/2}p+\bigo(2^{p}p)$.
\end{lemma}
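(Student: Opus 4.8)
The plan is to solve a recursion for the number of edges. Write $e_p := |E(G_{w,p})|$ and $n_p := |V(G_{w,p})|$. From the recursive construction of $G_{w,p}$ (two disjoint copies of $G_{w,p-1}$, together with a $K_{w+1}$ that is complete to each copy, and with no edges between the two copies) we get, for all $p\geq p_w+1$,
\[
    e_p = 2e_{p-1} + 2(w+1)n_{p-1} + \binom{w+1}{2}\;.
\]
Unrolling this down to the base case $p=p_w$, where $e_{p_w}=\binom{\lceil\delta(w)\rceil}{2}$, gives
\[
    e_p = 2^{p-p_w}e_{p_w} + \sum_{j=p_w}^{p-1} 2^{p-1-j}\Bigl(2(w+1)n_j + \binom{w+1}{2}\Bigr)\;.
\]

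Next I would bound the three contributions. By \Cref{lem: general estimate for vertices in Gwp}, $n_j\leq 5\cdot 2^j$ for all $j\geq p_w$ (applicable since $w$ is large), so $\sum_{j=p_w}^{p-1}2^{p-1-j}\cdot 2(w+1)n_j \leq 5(w+1)(p-p_w)\,2^p$; this is the dominant term. Since $\sum_{j=p_w}^{p-1}2^{p-1-j}<2^{p-p_w}$, the $\binom{w+1}{2}$ part contributes at most $\binom{w+1}{2}2^{p-p_w}$. For the initial term, combining $e_{p_w}=\binom{\lceil\delta(w)\rceil}{2}\leq\tfrac12\lceil\delta(w)\rceil^2$ with \Cref{fact: delta upper bound} ($\lceil\delta(w)\rceil\leq 12(\tfrac32)^w$) and \Cref{fact: exponential of pw} ($2^{-p_w}\leq\tfrac13(\tfrac23)^w$) yields $2^{p-p_w}e_{p_w}\leq 24\bigl(\tfrac32\bigr)^w 2^p$, and similarly $\binom{w+1}{2}2^{p-p_w}\leq\tfrac16(w+1)^2\,2^p$. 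Altogether,
\[
    e_p \leq \Bigl(5(w+1)(p-p_w) + 24\bigl(\tfrac32\bigr)^w + \tfrac16(w+1)^2\Bigr)2^p\;.
\]

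Finally, I would substitute $w=\lceil\log_{3/2}p\rceil$, so that $w+1\leq\log_{3/2}p+2$ and $(\tfrac32)^w\leq\tfrac32 p$. Then $5(w+1)(p-p_w)\leq 5p\log_{3/2}p+\bigo(p)$, $24(\tfrac32)^w=\bigo(p)$, and $(w+1)^2=\bigo((\log p)^2)=\bigo(p)$, which together give $e_p\leq 5\cdot 2^p p\log_{3/2}p + \bigo(2^p p)$, as required.

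I do not anticipate a serious obstacle; the argument is essentially bookkeeping on top of \Cref{lem: general estimate for vertices in Gwp} and the facts of \Cref{sec: useful functions}. The two points needing care are checking that \Cref{lem: general estimate for vertices in Gwp} applies throughout the sum (it does, since $j\geq p_w$ and $w\to\infty$ as $p\to\infty$), and confirming that every error term is genuinely $\bigo(2^p p)$ — in particular the term $24(\tfrac32)^w 2^p$, which is precisely why the choice $w=\lceil\log_{3/2}p\rceil$ (making $(\tfrac32)^w=\Theta(p)$) is the right one.
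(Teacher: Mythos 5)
Your proposal is correct and takes essentially the same approach as the paper: you phrase the edge count as a recurrence and unroll it, whereas the paper tallies the three contributions (the $\binom{w+1}{2}$ edges inside each of the $2^m-1$ copies of $K_{w+1}$, the $\binom{\lceil\delta(w)\rceil}{2}$ edges inside each of the $2^m$ copies of $G_{w,p_w}$, and the downward edges $\sum_{i=1}^m 2^i(w+1)|V(G_{w,p-i})|$) directly, but these give identical terms. Both then invoke \Cref{lem: general estimate for vertices in Gwp}, \Cref{fact: delta upper bound}, and \Cref{fact: exponential of pw} to reduce everything to $5\cdot 2^p p w + \bigo(2^p p)$ and substitute $w=\lceil\log_{3/2}p\rceil$.
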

\begin{proof}
    Each of the $2^{m}-1$ copies of $K_{w+1}$ in $G_{w,p}$ contains $\binom{w+1}{2}$ edges. Each of the $2^{m}$ copies of $G_{w,p_{w}}$ in $G_{w,p}$ contains $\binom{\lceil\delta(w)\rceil}{2} \leq 12^{2}(\frac{3}{2})^{2w}$ edges by \Cref{fact: delta upper bound}. The contribution of the `downward' edges from each $K_{w+1}$ (see \Cref{fig: example of Gwp}) is the final term in:
    \begin{align*}
        |E(G_{w,p})|\leq(2^{m}-1)\binom{w+1}{2}+2^{m}\cdot12^{2}\Big(\frac{3}{2}\Big)^{2w}+\sum_{i=1}^{m}2^{i}(w+1)|V(G_{w,p-i})|\;.
    \end{align*}
    By \Cref{lem: general estimate for vertices in Gwp} it follows that, for each $i\in\{1,\ldots,m\}$, $2^{i}|V(G_{w,p-i})|\leq5\cdot2^{p}$. Therefore, 
    \begin{align*}
        |E(G_{w,p})|<2^{m}\binom{w+1}{2}+2^{m}\cdot12^{2}\Big(\frac{3}{2}\Big)^{2w}+5\cdot 2^{p}m(w+1)\;.
    \end{align*}
    Since $m\leq p$ and $2^{m}\leq2^{p}\frac{1}{3}(\frac{2}{3})^{w}$ by \Cref{fact: exponential of pw},
    \begin{align*}
        |E(G_{w,p})|\leq2^{p}\frac{1}{3}\Big(\frac{2}{3}\Big)^{w}\binom{w+1}{2}+12^{2}\cdot2^{p}\frac{1}{3}\Big(\frac{3}{2}\Big)^{w}+5\cdot 2^{p}p(w+1)\;.
    \end{align*}
    Since $w=\lceil\log_{3/2}p\rceil$, $5\cdot 2^{p}pw$ is the dominating function in the above inequality. The lower order terms are $\bigo(2^{p}p)$. It follows that
    \begin{align*}
        |E(G_{w,p})|\leq5\cdot 2^{p}pw+\bigo(2^{p}p)\leq5\cdot 2^{p}p\log_{3/2}p+\bigo(2^{p}p)
    \end{align*}
    as claimed.
\end{proof}

For the remainder of this section, let $p:=\lceil\log_{2}(2n)\rceil$ and consider $n$ large.

\begin{theorem}\label{thm: estimating vertices and edges in Gwp in terms of n}
The graph $G_{w,p}$ satisfies $|V(G_{w,p})|\leq20n$ and $|E(G_{w,p})|\leq20n(\log_{2} n)(\log_{3/2}\log_{2} n)+\bigo(n\log n)$. Moreover, $G_{w,p}$ contains every $n$-vertex tree.
\end{theorem}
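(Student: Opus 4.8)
The plan is to instantiate the bounds already proved at the chosen values $w=\lceil\log_{3/2}p\rceil$ and $p=\lceil\log_2(2n)\rceil$, after first checking that those statements apply. Since $n$ is large, $p$ is large, hence $w$ is large; and since $p_w\leq w+\log_{3/2}(9)=\bigo(\log p)$ by \Cref{fact: pw is linear in w}, we have $p\geq p_w$ once $n$ is large enough. So $G_{w,p}$ is defined and \Cref{thm: construction proof for Gwp}, \Cref{lem: general estimate for vertices in Gwp} and \Cref{lem: estimating edges in Gwp} all apply. From $p=\lceil\log_2(2n)\rceil$ we have $2n\leq 2^p<4n$, so \Cref{lem: general estimate for vertices in Gwp} gives $|V(G_{w,p})|\leq 5\cdot 2^p<20n$.

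For the edge count, \Cref{lem: estimating edges in Gwp} gives
\begin{align*}
  |E(G_{w,p})|\leq 5\cdot 2^p\,p\log_{3/2}p+\bigo(2^p p).
\end{align*}
Substituting $2^p<4n$, using $p\leq\log_2 n+2$, and expanding $p\log_{3/2}p=(\log_2 n)(\log_{3/2}\log_2 n)+\bigo(\log\log n)$ turns this into $|E(G_{w,p})|\leq 20n(\log_2 n)(\log_{3/2}\log_2 n)+\bigo(n\log n)$, with the $\bigo(2^p p)=\bigo(n\log n)$ term absorbing all lower-order pieces. This is pure asymptotic bookkeeping.

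It remains to show $G_{w,p}$ contains every $n$-vertex tree. By \Cref{thm: construction proof for Gwp} it suffices to prove $\gamma(w,p)\geq n-1$, since an $n$-vertex tree has $n-1$ edges. Put $t:=(\tfrac23)^w$; since $w\geq\log_{3/2}p$ we have $(\tfrac32)^w\geq p$, so $t\leq\tfrac1p$. Then $\frac{2+t}{1+t}=2-\frac{t}{1+t}\geq 2-t\geq 2-\tfrac1p$, whence
\begin{align*}
  \gamma(w,p)=\Big(\frac{2+t}{1+t}\Big)^p\geq 2^p\Big(1-\frac{1}{2p}\Big)^p.
\end{align*}
For $p$ large, $\big(1-\tfrac{1}{2p}\big)^p\geq\tfrac12$, since this quantity tends to $e^{-1/2}>\tfrac12$ as $p\to\infty$. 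Therefore $\gamma(w,p)\geq 2^{p-1}\geq n\geq n-1$, as needed.

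The one step that is not routine is this last one: we need the base $\frac{2+(2/3)^w}{1+(2/3)^w}$ to be close enough to $2$ that its $p$-th power still beats $n$, which is precisely why $w$ is chosen of order $\log_{3/2}p$ (so that $(2/3)^w=\bigo(1/p)$) rather than a constant. I do not anticipate difficulty elsewhere: the rest is substitution into the earlier lemmas plus asymptotic estimates.
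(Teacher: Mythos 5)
Your proof is correct and takes essentially the same route as the paper: instantiate the earlier vertex and edge lemmas at $p=\lceil\log_2(2n)\rceil$ and $w=\lceil\log_{3/2}p\rceil$, then establish $\gamma(w,p)\geq n$ by bounding the base $\frac{2+(2/3)^w}{1+(2/3)^w}$ below by $2-(2/3)^w\geq 2-1/p$ and showing $(1-\tfrac{1}{2p})^p\geq\tfrac12$. The only small difference is that the paper gets this last inequality from Bernoulli's inequality (which works for all $p$), whereas you appeal to the limit $e^{-1/2}>\tfrac12$, which is equally valid here since $n$, and hence $p$, is assumed large.
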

\begin{proof}
    By \Cref{lem: general estimate for vertices in Gwp}, $|V(G_{w,p})|\leq5\cdot2^{\lceil\log_{2}(2n)\rceil}\leq20n$. By \Cref{lem: estimating edges in Gwp},
    \begin{align*}
        |E(G_{w,p})|&\leq5\cdot2^{\lceil\log_{2}(2n)\rceil}\cdot\lceil\log_{2}(2n)\rceil\cdot\log_{3/2}\lceil\log_{2}(2n)\rceil+\bigo(2^{\lceil\log_{2}(2n)\rceil}\cdot\lceil\log_{2}(2n)\rceil)\\
        &\leq20n(\log_{2} n)(\log_{3/2}\log_{2} n)+\bigo(n\log n)\;.
    \end{align*}
    We now show $\gamma(w,p)\geq n$. We have
    \begin{align*}
        2+\Big(\frac{2}{3}\Big)^{w}>2+\Big(\frac{2}{3}\Big)^{w}-\Big(\frac{2}{3}\Big)^{2w}=\Big(2-\Big(\frac{2}{3}\Big)^{w}\Big)\Big(1+\Big(\frac{2}{3}\Big)^{w}\Big)\;,
    \end{align*}
    implying $\frac{2+(\frac{2}{3})^{w}}{1+(\frac{2}{3})^{w}}\geq2-(\frac{2}{3})^{w}$. Furthermore,
    \begin{align*}
        \Big(1-\frac{1}{2}\Big(\frac{2}{3}\Big)^{w}\Big)^{p}\geq1-\frac{p}{2}\Big(\frac{2}{3}\Big)^{w}\geq\frac{1}{2}\;,
    \end{align*}
    where the first inequality follows from Bernoulli's inequality, and the second inequality follows since $w=\lceil\log_{3/2}p\rceil$. Multiplying both extremes of the above inequality by $2^{p}$ implies $(2-(\frac{2}{3})^{w})^{p}\geq2^{p-1}$. Hence, 
    \begin{align*}
        \gamma(w,p)=\Big(\frac{2+(\frac{2}{3})^{w}}{1+(\frac{2}{3})^{w}}\Big)^{p}\geq\Big(2-\Big(\frac{2}{3}\Big)^{w}\Big)^{p}\geq 2^{p-1}\geq n\;.
    \end{align*}
    Therefore, \Cref{thm: construction proof for Gwp} implies that $G_{w,p}$ contains every $n$-vertex tree.
\end{proof}

\begin{corollary}
    $s(n)\leq\bigo(n(\log n)(\log\log n))$.
\end{corollary}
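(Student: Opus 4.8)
The plan is to derive the corollary directly from \Cref{thm: estimating vertices and edges in Gwp in terms of n}. That theorem, with the parameter choices $w:=\lceil\log_{3/2}p\rceil$ and $p:=\lceil\log_{2}(2n)\rceil$ already fixed in the preceding paragraphs, exhibits a single graph $G_{w,p}$ that contains every $n$-vertex tree and satisfies
\[
|E(G_{w,p})|\leq 20n(\log_{2}n)(\log_{3/2}\log_{2}n)+\bigo(n\log n).
\]
First I would observe that, by definition, $s(n)$ is at most the number of edges in any graph that contains every $n$-vertex tree; taking this graph to be $G_{w,p}$ gives $s(n)\leq|E(G_{w,p})|$ for all sufficiently large $n$.

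Next I would translate the explicit bound into asymptotic notation: $\log_{2}n=\Theta(\log n)$ and $\log_{3/2}\log_{2}n=\Theta(\log\log n)$, since these logarithms differ only by constant factors from $\log$ and $\log\log$; hence the leading term is $\bigo(n(\log n)(\log\log n))$, and the error term $\bigo(n\log n)$ is dominated by it. Therefore $s(n)\leq\bigo(n(\log n)(\log\log n))$ for all large $n$, and the finitely many small values of $n$ are absorbed into the implied constant since $s(n)$ is finite for every fixed $n$ (for instance $s(n)\leq\binom{n}{2}$).

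I do not anticipate any real obstacle: the corollary is purely a repackaging of \Cref{thm: estimating vertices and edges in Gwp in terms of n} in $\bigo$-notation, and all the substantive work — constructing $G_{w,p}$, proving it is universal for $n$-vertex trees via \Cref{thm: construction proof for Gwp}, and bounding its order and size via \Cref{lem: general estimate for vertices in Gwp} and \Cref{lem: estimating edges in Gwp} — has already been carried out. The only point warranting a moment's care is checking that the parameter choices are admissible (namely that $p\geq p_{w}$ for large $n$), which is precisely the remark made just before \Cref{lem: estimating edges in Gwp}.
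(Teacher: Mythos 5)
Your proposal is correct and is exactly how the paper obtains the corollary: it is an immediate consequence of \Cref{thm: estimating vertices and edges in Gwp in terms of n}, with the translation to $\bigo$-notation absorbing the constant factors from the logarithm bases and the lower-order $\bigo(n\log n)$ term.
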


\section{Treewidth $k$ Graphs}
\label{Treewidthk}

In this section we show that
\begin{align*}
    \Omega(kn\log n)\leq s_{k}(n)\leq\bigo(kn(\log n)(\log\log n))\;.
\end{align*}
For the upper bound, the proof is very similar to the proof for trees, with some minor changes. In particular, the proof for trees is in terms of the number of edges, whereas the proof for treewidth $k$ graphs solely uses the number of vertices. In the proof for trees, the separator in the base case has one vertex, whereas in the $k=1$ case of the treewidth $k$ proof, the analogous separator has two vertices.

The proof of the lower bound follows an argument similar to that used by \citet{GLST23}.

\subsection{Lower bound}

\begin{theorem}\label{thm: lower bound sk(n)}
    $s_k(n)\geq\Omega(kn\log n)$.
\end{theorem}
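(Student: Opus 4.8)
The plan is to generalise the degree-sequence argument sketched in the introduction, but with a family of hard instances designed so that the factor $k$ and the factor $\log n$ multiply rather than merely add. Fix a graph $U$ that contains every $n$-vertex graph of treewidth at most $k$. For each integer $i$ with $1\le i\le\lfloor n/k\rfloor$, set $m_i:=\lfloor n/i\rfloor-k\ge 0$, let $H_i$ be the graph obtained by joining a clique $K_k$ to an edgeless graph $\overline{K_{m_i}}$ on $m_i$ vertices (so $|V(H_i)|=\lfloor n/i\rfloor$), and let $G_i$ be the disjoint union of $i$ copies of $H_i$ together with $n-i\lfloor n/i\rfloor\ge 0$ isolated vertices, so that $|V(G_i)|=n$. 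Note that the $k=1$ case recovers exactly the disjoint union of $i$ stars used in the introduction.

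First I would verify that $\tw(G_i)\le k$. The graph $H_i$ has a tree-decomposition whose host tree is a path with $m_i$ bags, where the $j$-th bag consists of the $k$ vertices of the $K_k$ together with the $j$-th vertex of $\overline{K_{m_i}}$; each bag has size $k+1$, so $\tw(H_i)\le k$. Since treewidth is unaffected by disjoint unions and by adding isolated vertices, $\tw(G_i)\le k$, and hence $U$ contains $G_i$ as a subgraph. In $G_i$, each of the $ik$ vertices lying in a copy of the $K_k$ has degree $(k-1)+m_i=\lfloor n/i\rfloor-1$, so any embedding of $G_i$ into $U$ produces $ik$ distinct vertices of $U$ each of degree at least $\lfloor n/i\rfloor-1$.

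Writing $d_1\ge d_2\ge\cdots$ for the degree sequence of $U$, the previous step gives $d_{ik}\ge\lfloor n/i\rfloor-1$ for every $i\in\{1,\dots,\lfloor n/k\rfloor\}$ (equivalently, the degree sequence of $U$ lexicographically dominates the sequence whose first $k$ terms are at least $\lfloor n/1\rfloor-1$, whose next $k$ terms are at least $\lfloor n/2\rfloor-1$, and so on). Since the index sets $\{(i-1)k+1,\dots,ik\}$ for $i=1,\dots,\lfloor n/k\rfloor$ are disjoint subsets of $\{1,\dots,n\}$ and the sequence is non-increasing,
\begin{align*}
    2\,|E(U)| = \sum_{j} d_j \;\ge\; \sum_{i=1}^{\lfloor n/k\rfloor} k\, d_{ik} \;\ge\; k\sum_{i=1}^{\lfloor n/k\rfloor}\Big(\lfloor n/i\rfloor-1\Big) \;\ge\; k\,\Big(n\ln\lfloor n/k\rfloor - 2\lfloor n/k\rfloor\Big).
\end{align*}
Because $n\gg k$ we have $\ln\lfloor n/k\rfloor=\Omega(\log n)$ while $2\lfloor n/k\rfloor=\bigo(n)$, so the right-hand side is $\Omega(kn\log n)$, giving $s_k(n)=|E(U)|\ge\Omega(kn\log n)$.

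The treewidth computation and the harmonic-sum estimate are both routine, so the real content is the choice of the family $\{G_i\}$; this is also where the proof genuinely improves on naively rescaling the tree argument. The key point is that one copy of $K_k$ joined to an independent set of size $m$ simultaneously provides $k$ vertices of degree about $m$ while occupying only about $m$ vertices, so $i$ disjoint copies supply $ik$ high-degree vertices within the vertex budget $n$; feeding this into the sum over $i$ is exactly what makes $k$ and $\log n$ multiply. The one mild technical care needed is ensuring $m_i=\lfloor n/i\rfloor-k\ge 0$, which is why $i$ is restricted to $\{1,\dots,\lfloor n/k\rfloor\}$, and this restriction loses nothing precisely because $n\gg k$.
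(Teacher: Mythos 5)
Your proof is correct, but it takes a genuinely different route from the paper's. Both proofs use essentially the same hard family (disjoint copies of a $k$-clique attached to a large independent set, giving treewidth exactly $k$ via the same one-bag-per-outer-vertex path decomposition), so the real difference is in how the edges of $U$ are counted. The paper follows the marking argument of GLST23: it fixes an embedding $H_j$ of $j\cdot K_{k,\lfloor n/j\rfloor-k}$ for each $j$, iteratively selects $k$ fresh ``inner'' vertices $X_j$ disjoint from $X_1\cup\cdots\cup X_{j-1}$, marks the incident edges, and carefully lower-bounds the number of newly marked edges at each step by $k(\frac{n}{j}-kj-1)$. That $-kj$ term forces the paper to truncate the sum at $j\leq t(n)\approx\sqrt{n/k}$ to keep the summand non-negative. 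You instead generalise the degree-sequence argument sketched in the introduction: each embedding of $G_i$ certifies that $d_{ik}\geq\lfloor n/i\rfloor-1$ in the sorted degree sequence of $U$, and summing $k\,d_{ik}$ over disjoint blocks of indices yields $2|E(U)|\geq k\sum_{i\leq n/k}(\lfloor n/i\rfloor-1)$. This avoids the marking bookkeeping entirely, needs no $t(n)$-style cutoff (the sum naturally runs to $\lfloor n/k\rfloor$), and is arguably the more transparent generalisation of the tree case; the paper's marking argument, on the other hand, is the one that GLST23 push further to optimise the leading constant, so it is the better starting point if one cares about constants rather than just the $\Omega(kn\log n)$ order of growth. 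One small wording quibble: as in the paper's introduction, what you call ``lexicographic'' dominance of the degree sequence is really coordinatewise dominance; the argument itself is fine.
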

\begin{proof}
    Let $t(n):=\sqrt{\frac{n}{k}+\frac{1}{4k^{2}}}-\frac{1}{2k}$. Assume $n$ is sufficiently large (relative to $k$) so that $2\leq\frac{1}{2}\sqrt{\frac{n}{k}}+1\leq t(n)\leq\frac{n}{k+1}$. The function $t(n)$ is chosen so $\frac{n}{j}-kj-1$ is a non-negative decreasing function of $j$ in the interval $[1,t(n)]$.
    
    Let $U$ be a graph that contains every graph with $n$ vertices and treewidth at most $k$. We show that $|E(U)|\geq\Omega(kn\log n)$.

    For each $j\in\{1,\dots,\lfloor t(n)\rfloor\}$, let $S_{j}$ be the complete bipartite graph $K_{k,\lfloor\frac{n}{j}\rfloor-k}$; the vertices in the part of size $k$ are called \define{inner} vertices. We write $j\cdot S_{j}$ for the disjoint union of $j$ copies of $S_{j}$. Since $|V(j\cdot S_{j})|\leq n$ and $\tw(j\cdot S_{j})\leq k$, $U$ contains $j\cdot S_{j}$. Let $H_{j}$ be a copy of $j\cdot S_{j}$ in $U$. Note that $H_{j}$ has $kj$ inner vertices.
    
    Mark some of the edges of $U$ as follows. Let $X_{1}$ be the set of inner vertices of $H_{1}$. Mark every edge of $H_{1}$ with an end vertex in $X_{1}$; there are $k(\lfloor\frac{n}{1}\rfloor-k)$ such edges. Apply the following algorithm: for $j=2,\ldots,\lfloor t(n)\rfloor$, let $X_{j}$ be a set of $k$ inner vertices of $H_{j}$ such that $(X_{1}\cup\dots\cup X_{j-1})\cap X_{j}=\varnothing$. Mark every edge of $H_{j}$ with an end vertex in $X_{j}$.

    At the end of step $j$, for each $x\in X_{j}$, the number of new marked edges in $U$ incident to $x$ is at least $\lfloor\frac{n}{j}\rfloor-k$ minus the number of vertices in $X_{1}\cup\dots\cup X_{j-1}$ that are in the same component of $H_{j}$ as $x$. Since $|X_{1}\cup\dots\cup X_{j-1}|=k(j-1)$ and $|X_{j}|=k$, it follows that at the end of step $j$, the number of new marked edges in $U$ is at least $k(\lfloor\frac{n}{j}\rfloor-k)-k^{2}(j-1)\geq k(\frac{n}{j}-kj-1)$. Thus, at the end of the algorithm, the total number of marked edges in $U$ is at least
    \begin{align*}
        \sum_{j=1}^{\lfloor t(n)\rfloor}k\Big(\frac{n}{j}-kj-1\Big)\geq k\int_{1}^{\frac{1}{2}\sqrt{\frac{n}{k}}}\Big(\frac{n}{j}-kj-1\Big)\;dj
        &=k\bigg[n\log j-\frac{k}{2}j^{2}-j\bigg]_{j=1}^{\frac{1}{2}\sqrt{\frac{n}{k}}}\\
        &\geq kn\log\Big(\frac{1}{2}\sqrt{\frac{n}{k}}\Big)-k\bigg[\frac{k}{2}j^{2}+j\bigg]_{j=1}^{\frac{1}{2}\sqrt{\frac{n}{k}}}\\
        &\geq\frac{1}{2}kn\log n-\bigo(kn\log k)-\bigo(kn)\\
        &\geq\Omega(kn\log n)\;.\qedhere
    \end{align*}
\end{proof}

\subsection{Normal tree-decompositions}

A tree-decomposition with width $k$ is \define{normal} if every bag has size $k+1$, and the intersection of any two adjacent bags has size $k$. Every graph has a tree-decomposition of minimum width that is normal \cite{HW17}. In this section we prove \Cref{lem: z subtrees}; a result relating to normal tree-decompositions which is used in \Cref{sec: w components of graphs}. We first introduce some new notation.

Given a tree-decomposition $(B_{x}:x\in V(T))$ of a graph $G$ and a vertex $z$ of $T$, for any subtree $T'$ of $T$ define \define{$G(T',z):=(\bigcup_{x\in V(T')}B_{x})\setminus B_{z}$}. Then for any real number $\alpha\geq1$, $T'$ is said to be \define{$(\alpha,z)$-light} if $|G(T',z)|\leq\alpha$, and \define{$(\alpha,z)$-heavy} if $|G(T',z)|>\alpha$.

Recall that a $z$-subtree of $T$ is a maximal subtree of $T$ in which $z$ only appears as a leaf. Note that if $\mathcal{F}$ is the set of all $z$-subtrees of $T$, then $\{G(F,z):F\in\mathcal{F}\}$ partitions $V(G)\setminus B_{z}$.

\begin{lemma}\label{lem: z subtrees}
    For every integer $k\geq1$, for every real number $\alpha\geq1$, for every graph $G$ with $|V(G)|>\alpha+k+1$ and treewidth $k$, for every normal tree-decomposition $(B_{x}:x\in V(T))$ of $G$ with width $k$, there exists a vertex $z\in V(T)$ such that for some set $\C$ of $z$-subtrees of $T$,
    \begin{align*}
        \alpha<\sum_{T'\in\C}|G(T',z)|\leq2\alpha\;.
    \end{align*}
\end{lemma}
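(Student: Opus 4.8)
The plan is to follow the template of \Cref{lem: w subtrees base case}, with $|G(T',z)|$ playing the role of the edge-count $|E(T')|$, but organised as an extremal argument rather than the path-chasing used there. Call a pair $(z,C)$ a \emph{big pair} if $z\in V(T)$, $C$ is a component of $T-z$, and $|G(\{z\}\cup C,z)|>\alpha$; here $\{z\}\cup C$ is precisely a $z$-subtree of $T$ on at least two nodes. The point of this bookkeeping is that the recursion will stay visibly inside the set of $z$-subtrees of $T$. First I would note that big pairs exist: since $|V(G)|>\alpha+k+1>k+1$, the tree $T$ has at least two nodes, so for any leaf $z$ (so that $T-z$ is connected) we get $|G(\{z\}\cup(T-z),z)|=|V(G)\setminus B_z|=|V(G)|-(k+1)>\alpha$, using that every bag has size $k+1$.

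Then I would take a big pair $(z,C)$ with $|G(\{z\}\cup C,z)|$ minimum, set $S:=\{z\}\cup C$, and split into two cases. If $|G(S,z)|\leq2\alpha$ we are finished with $\C=\{S\}$. If $|G(S,z)|>2\alpha$, let $z'$ be the unique neighbour of $z$ in $C$ and let $C_1,\dots,C_d$ be the components of $C-z'$; a short check shows these are exactly the components of $T-z'$ other than the one containing $z$, so each $\{z'\}\cup C_i$ is a $z'$-subtree of $T$, and together with $\{z,z'\}$ they are all the $z'$-subtrees of $S$. By the partition observation stated just before the lemma, applied inside $S$, the weights $|G(\cdot,z')|$ of the $z'$-subtrees of $S$ sum to $|(\bigcup_{x\in S}B_x)\setminus B_{z'}|$, and this equals $|G(S,z)|$ since $B_z$ and $B_{z'}$ are both $(k+1)$-subsets of $\bigcup_{x\in S}B_x$. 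Normality gives $|G(\{z,z'\},z')|=|B_z\setminus B_{z'}|=1$, so $\sum_{i=1}^{d}|G(\{z'\}\cup C_i,z')|=|G(S,z)|-1>2\alpha-1\geq\alpha$; in particular $d\geq1$. If some $|G(\{z'\}\cup C_i,z')|>\alpha$, then $(z',C_i)$ is a big pair of smaller weight, contradicting the choice of $(z,C)$; hence every $\{z'\}\cup C_i$ is $(\alpha,z')$-light, their weights total more than $\alpha$, and a minimal subfamily $\C$ of $\{\{z'\}\cup C_i:1\leq i\leq d\}$ with $\sum_{T'\in\C}|G(T',z')|>\alpha$ automatically satisfies $\sum_{T'\in\C}|G(T',z')|\leq2\alpha$, since deleting any one member of $\C$ drops the sum to at most $\alpha$ and that member weighs at most $\alpha$.

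The routine but fiddly computation is the identity that removing a leaf $z$ from a subtree $S$ and measuring weight relative to the neighbour $z'$ rather than to $z$ changes the count by exactly $1$ — the analogue of $|E(T-x)|=|E(T)|-1$ — which is where normality, through $|B_z\cap B_{z'}|=k$, is genuinely used. The step I expect to need the most care is checking that the subtrees $\{z'\}\cup C_i$ handed back by the induction really are $z$-subtrees of $T$ rather than merely of $S$; the ``$(\text{node},\text{component})$'' formulation above is designed precisely to make this automatic, and it is the main reason I would avoid a verbatim translation of the path-chasing in \Cref{lem: w subtrees base case}, which would instead force one to reverify maximality after each pruning step.
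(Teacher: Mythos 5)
Your proof is correct, and the underlying descent idea is the same as the paper's, but the organisation is genuinely different: the paper builds an explicit maximal path $(v_0,\dots,v_t)$ along which each $v_i$-subtree of $T-v_{i-1}$ stays $(\alpha,v_i)$-heavy, then applies the greedy-minimal-family argument at the terminus, whereas you skip the path entirely by taking a weight-minimising ``big pair'' $(z,C)$ and deriving a contradiction from any heavy $\{z'\}\cup C_i$. This buys you two things. First, the identity $\sum_{F}|G(F,z')|=\bigl|\bigl(\bigcup_{x\in S}B_x\bigr)\setminus B_{z'}\bigr|=|G(S,z)|$, which holds because $B_z$ and $B_{z'}$ are both $(k+1)$-subsets of $\bigcup_{x\in S}B_x$, replaces the paper's repeated recomputation of $\bigl|\bigl(\bigcup_{x\in V(T')}B_x\bigr)\setminus\{u\}\bigr|$ at each step of the path. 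Second, the $(\text{node},\text{component})$ bookkeeping makes it immediate that the subtrees returned are $z'$-subtrees of the original $T$ and not merely of the pruned tree, which the paper leaves implicit (its final set is a priori a family of $v_{t+1}$-subtrees of $T'-v_t$, and one must observe these coincide with $v_{t+1}$-subtrees of $T$). Your worry about $d\geq 1$ is handled correctly: the sum $|G(S,z)|-1>2\alpha-1\geq 1$ forces at least one $C_i$, and the degenerate two-node case is excluded since normality gives $|V(G)|=|V(T)|+k$, so $|V(G)|>\alpha+k+1$ forces $|V(T)|\geq 3$.
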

\begin{proof}
    Let $y$ be a leaf in $T$. Since $|G(T,y)|=|V(G)|-(k+1)>\alpha$, $T$ is $(\alpha,y)$-heavy. We may assume $T$ is $(2\alpha,y)$-heavy, otherwise $\{T\}$ is a desired set of $y$-subtrees of $T$. Let $z$ be the neighbour of $y$ in $T$ and let $v\in B_{y}\setminus B_{z}$. Note that $v$ does not appear in a bag of $T-y$. Therefore,
    \begin{align*}
        \Big|\Big(\bigcup_{x\in V(T)}B_{x}\Big)\setminus\{v\}\Big|=|G(T,y)|+k>2\alpha+k\geq\alpha+k+1\;.
    \end{align*}
    Suppose that every $z$-subtree of $T-y$ is $(\alpha,z)$-light. Then let $\{T_{1},\ldots,T_{j}\}$ be a minimal set of $z$-subtrees of $T-y$ such that $|G(T_{1},z)|+\cdots+|G(T_{j},z)|>\alpha$ (this is well-defined since the union of all the $z$-subtrees of $T-y$ is $(\alpha,z)$-heavy). If $|G(T_{1},z)|+\cdots+|G(T_{j},z)|>2\alpha$, then the minimality of $\{T_{1},\ldots,T_{j}\}$ implies
    \begin{align*}
        |G(T_{j},z)|=|G(T_{1},z)|+\cdots+|G(T_{j},z)|-(|G(T_{1},z)|+\cdots+|G(T_{j-1},z)|)>2\alpha-\alpha=\alpha\;,
    \end{align*}
    contradicting that $T_{j}$ is $(\alpha,z)$-light. Thus, $\{T_{1},\ldots,T_{j}\}$ is a desired set of $z$-subtrees of $T$.

    Now assume some $z$-subtree of $T-y$ is $(\alpha,z)$-heavy. Let $P=(v_{0},v_{1},\dots,v_{t})$ be a maximal path in $T$ such that $v_{0}=y$, $v_{1}=z$ and for each $i\geq1$ there exists an $(\alpha,v_{i})$-heavy $v_{i}$-subtree of $T-v_{i-1}$ that contains $v_{i}$. Let $T'$ be an $(\alpha,v_{t})$-heavy $v_{t}$-subtree of $T-v_{t-1}$ that contains $v_{t}$. We may assume $T'$ is $(2\alpha,v_{t})$-heavy, otherwise $\{T'\}$ is a desired set of $v_{t}$-subtrees of $T$. Let $v_{t+1}$ be the neighbour of $v_{t}$ in $T'$, and let $u\in B_{v_{t}}\setminus B_{v_{t+1}}$. Note that $u$ does not appear in a bag of $T'-v_{t}$. Therefore, 
    \begin{align*}
        \Big|\Big(\bigcup_{x\in V(T')}B_{x}\Big)\setminus\{u\}\Big|=|G(T',v_{t})|+k>2\alpha+k\geq\alpha+k+1\;.
    \end{align*}
    By the maximality of $P$, every $v_{t+1}$-subtree of $T'-v_{t}$ is $(\alpha,v_{t+1})$-light. By the same argument given above, a minimal set of $v_{t+1}$-subtrees of $T'-v_{t}$ whose union is $(\alpha,v_{t+1})$-heavy will suffice. This proves the lemma.
\end{proof}

\subsection{$W$-components of graphs}\label{sec: w components of graphs}

In this section, we prove a result analogous to \Cref{lem: w subtrees induction} but for graphs with bounded treewidth. To do this, we first introduce a modified notion of $W$-subtrees from \Cref{sec: w subtrees of forests} that generalises to arbitrary graphs. For a graph $G$ and a set $W\subseteq V(G)$, a \define{$W$-component} of $G$ is a connected component of $G-W$.

\begin{lemma}\label{lem: w components induction}
    For all integers $k\geq1$ and $w\geq0$, for every real number $\alpha\geq(2k+4)(\frac{3}{2})^{w}$, for every graph $G$ with $|V(G)|>\alpha+k+1$ and $\tw(G)\leq k$, there exists $W\subseteq V(G)$ with $|W|\leq(k+1)(w+1)$ such that for some set $\mathcal{S}$ of $W$-components of $G$,
    \begin{align*}
        \alpha<\sum_{C\in\mathcal{S}}|V(C)|\leq\Big(1+\Big(\frac{2}{3}\Big)^{w}\Big)\alpha\;.
    \end{align*}
\end{lemma}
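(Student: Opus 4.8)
The plan is to mimic the proof of \Cref{lem: w subtrees induction}, replacing the base case \Cref{lem: w subtrees base case} with \Cref{lem: z subtrees}, and to induct on $w$. For the base case $w=0$: given $G$ with $|V(G)|>\alpha+k+1$ and $\tw(G)\le k$, fix a normal tree-decomposition $(B_x:x\in V(T))$ of width $k$ (which exists by the cited fact from \cite{HW17}). Apply \Cref{lem: z subtrees} to get a vertex $z\in V(T)$ and a set $\C$ of $z$-subtrees with $\alpha<\sum_{T'\in\C}|G(T',z)|\le 2\alpha$. Set $W:=B_z$, so $|W|=k+1=(k+1)(0+1)$. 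The key observation is that if $T_1,T_2$ are distinct $z$-subtrees of $T$, then $G(T_1,z)$ and $G(T_2,z)$ lie in different $W$-components of $G$: any vertex appearing in bags of two different $z$-subtrees must (by the vertex-property) appear in $B_z=W$, so each $G(T',z)$ is a union of vertex sets of $W$-components, and distinct $z$-subtrees contribute to disjoint families of $W$-components. Hence there is a set $\mathcal S$ of $W$-components with $\sum_{C\in\mathcal S}|V(C)|=\sum_{T'\in\C}|G(T',z)|$, giving the $w=0$ bound (using $(1+(\tfrac23)^0)\alpha=2\alpha$).

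For the induction step, assume the result for $w$ and let $\alpha\ge(2k+4)(\tfrac32)^{w+1}$, with $G$ satisfying $|V(G)|>\alpha+k+1$ and $\tw(G)\le k$. Since $\alpha\ge(2k+4)(\tfrac32)^{w}$, the induction hypothesis yields $W_0\subseteq V(G)$ with $|W_0|\le(k+1)(w+1)$ and a set $\mathcal S_0$ of $W_0$-components with $\alpha<\sum_{C\in\mathcal S_0}|V(C)|\le(1+(\tfrac23)^w)\alpha$. If the sum is already $\le(1+(\tfrac23)^{w+1})\alpha$ we are done, so assume otherwise. Let $H$ be the disjoint union of the components in $\mathcal S_0$; then $H$ is an induced subgraph of $G$, so $\tw(H)\le k$, and $|V(H)|>(1+(\tfrac23)^{w+1})\alpha=\alpha+2\beta$ where $\beta:=\tfrac13(\tfrac23)^w\alpha$. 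One checks $\beta\ge \tfrac13(\tfrac23)^w\cdot(2k+4)(\tfrac32)^{w+1} = k+2 \ge 1$, and more importantly $|V(H)|>\alpha+2\beta\ge\beta+k+1$ (since $\alpha>\beta+k+1$, as $\alpha-\beta=(1-\tfrac13(\tfrac23)^w)\alpha\ge\tfrac23\alpha\ge\tfrac23(2k+4)>k+1$). So \Cref{lem: z subtrees}-based reasoning applies: but I should instead apply the $w=0$ case of this very lemma to $H$ with parameter $\beta$ — wait, the $w=0$ case requires $|V(H)|>\beta+k+1$, which holds — to obtain $W_1\subseteq V(H)$ with $|W_1|\le k+1$ and a set of $W_1$-components of $H$ with total size in $(\beta,2\beta]$.

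Now set $W:=W_0\cup W_1$, so $|W|\le(k+1)(w+1)+(k+1)=(k+1)(w+2)$ as required. The $W_1$-components of $H$ are exactly the $W$-components of $G$ contained in components of $\mathcal S_0$ (since removing $W_0$ from $G$ first isolates $H$, then removing $W_1$ inside $H$ is the same as removing $W$ from that part of $G$); meanwhile the $W$-components of $G$ inside components of $\mathcal S_0$ partition $V(H)\setminus W_1$, up to the vertices of $W_1$ themselves, so I must be slightly careful that $|V(H)|=\sum|V(C)|$ over $W_1$-components plus $|W_1|$, whereas the inductive sums are over vertex sets not subtracting $W$; I will phrase everything in terms of $W$-components of $G$ directly, noting $\bigcup$ of $W$-components inside $\mathcal S_0$-components has vertex set $V(H)\setminus W_1$. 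Let $\overline{\mathcal C}$ be the set of all $W$-components of $G$ lying inside some component of $\mathcal S_0$ but \emph{not} in the chosen subfamily with total size $\le 2\beta$. Exactly as in \Cref{lem: w subtrees induction}, combining $\alpha<\sum_{\mathcal S_0}|V(C)|\le(1+(\tfrac23)^w)\alpha$ with the fact that the two subfamilies of $W$-components partition $V(H)\setminus W_1$ and the chosen one has size in $(\beta,2\beta]$, we get $\sum_{C\in\overline{\mathcal C}}|V(C)| = |V(H)\setminus W_1| - (\text{size in }(\beta,2\beta])$, which lands in $(\alpha,(1+(\tfrac23)^{w+1})\alpha]$ after using $|V(H)\setminus W_1|=\sum_{\mathcal S_0}|V(C)|-|W_1|$ and tracking the $|W_1|\le k+1$ correction — here I expect the main obstacle: the $|W_1|$-sized discrepancy between ``vertex count of $H$'' and ``sum over $W_1$-components'' must be absorbed, and I anticipate the constant $(2k+4)$ in the hypothesis (versus $3$ in the tree case) is precisely what is needed to give enough slack for this $O(k)$ error, so the arithmetic must be redone with this correction term rather than copied verbatim from \Cref{lem: w subtrees induction}. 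The result then follows by induction.
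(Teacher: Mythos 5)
Your outline follows the same route as the paper, and you correctly identify the key obstacle (the $|W_1|$-sized discrepancy coming from $V(H)$ being partitioned into $W_1$, the chosen $W_1$-components, and the leftover $W_1$-components). However, you explicitly leave the crucial arithmetic unfinished, and with the value of $\beta$ you actually write down — namely $\beta=\tfrac13(\tfrac23)^w\alpha=\tfrac12(\tfrac23)^{w+1}\alpha$, copied from the tree case — the lower bound genuinely fails. Tracing it through: $\sum_{C\in\overline{\mathcal S(X)}}|V(C)|=|V(H)|-|W_1|-\sum_{C\in\mathcal S(X)}|V(C)|>(1+(\tfrac23)^{w+1})\alpha-(k+1)-2\beta=\alpha-(k+1)$, which is strictly weaker than the required $>\alpha$. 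So "the arithmetic must be redone" is not a side remark here; it is the missing step that makes or breaks the lemma.

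The paper's fix is to define $\beta:=(\tfrac23)^{w+1}\tfrac{\alpha}{2}-k-1$, i.e.\ to shift the tree-case value of $\beta$ down by exactly $k+1$. Then the lower bound becomes $>(1+(\tfrac23)^{w+1})\alpha-(k+1)-2\beta=\alpha+k+1>\alpha$, and the upper bound computation still lands at $(1+(\tfrac23)^{w+1})\alpha$. Your intuition that the constant $2k+4$ in the hypothesis is what provides the needed slack is close but not quite right: the role of $\alpha\ge(2k+4)(\tfrac32)^{w+1}$ is to guarantee that this shifted $\beta$ satisfies $\beta\ge1$ (indeed it gives $\beta\ge1$ with equality in the extremal case), so that \Cref{lem: w components basis} is applicable to $H$ at all; it is not extra slack in the lower-bound inequality itself. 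A smaller point: in your base case you assert a normal tree-decomposition of width $k$, but if $\tw(G)<k$ the minimum-width normal tree-decomposition has width $\tw(G)$; either invoke the paper's trick of adding edges so that the treewidth becomes exactly $k$, or note that a bag of size $\tw(G)+1\le k+1$ still satisfies the required bound on $|W|$.
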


The proof of this result requires the following lemma analogous to \Cref{lem: w subtrees base case}.

\begin{lemma}\label{lem: w components basis}
    For every integer $k\geq1$, for every real number $\alpha\geq1$, for every graph $G$ with $|V(G)|>\alpha+k+1$ and $\tw(G)\leq k$, there exists $W\subseteq V(G)$ with $|W|=k+1$ such that for some set $\mathcal{S}$ of $W$-components of $G$,
    \begin{align*}
        \alpha<\sum_{C\in\mathcal{S}}|V(C)|\leq2\alpha\;.
    \end{align*}
\end{lemma}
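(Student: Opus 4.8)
The plan is to mimic the proof of \Cref{lem: w subtrees base case}, but with ``$v$-subtrees'' replaced by ``$z$-subtrees of a normal tree-decomposition'', using \Cref{lem: z subtrees} as the analogue of the ``minimal set'' trick and the final path argument. Fix a normal tree-decomposition $(B_x : x \in V(T))$ of $G$ with width $k$; this exists since $\tw(G) \le k$ and every graph of minimum treewidth has a normal tree-decomposition of that width. Recall that for the set $\mathcal F$ of all $z$-subtrees of $T$, the sets $\{G(F,z) : F \in \mathcal F\}$ partition $V(G) \setminus B_z$, so choosing a set $\C$ of $z$-subtrees and setting $W := B_z$ gives $|W| = k+1$ and $\sum_{T' \in \C} |G(T',z)| = |\bigcup_{T' \in \C} V(C_{T'})|$ for the corresponding $W$-components $C_{T'}$ of $G$; this is exactly the quantity we need to sandwich between $\alpha$ and $2\alpha$.

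First I would invoke \Cref{lem: z subtrees}: since $|V(G)| > \alpha + k + 1$ and $\alpha \ge 1$, that lemma directly supplies a vertex $z \in V(T)$ and a set $\C$ of $z$-subtrees of $T$ with $\alpha < \sum_{T' \in \C} |G(T',z)| \le 2\alpha$. Setting $W := B_z$ (a bag of size $k+1$) and letting $\mathcal S$ be the set of $W$-components $C$ of $G$ with $V(C) \subseteq G(T',z)$ for some $T' \in \C$, the edge-property and the fact that the $G(T',z)$ partition $V(G)\setminus B_z$ ensure each such $C$ is a union of the connected pieces inside a single $G(T',z)$; more precisely each $G(T',z)$ is itself a disjoint union of $W$-components, so $\mathcal S$ is a well-defined set of $W$-components and $\sum_{C \in \mathcal S} |V(C)| = \sum_{T' \in \C} |G(T',z)|$, which lies in $(\alpha, 2\alpha]$ as required.

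So in fact the heavy lifting has already been done inside \Cref{lem: z subtrees}, and the main obstacle --- such as it is --- is the bookkeeping of translating the tree-decomposition language ($z$-subtrees, $G(T',z)$) back into the graph language ($W$-components, $|V(C)|$), making sure that a union of $z$-subtrees corresponds cleanly to a union of $W$-components with matching vertex count. The one subtle point to check carefully is that distinct $z$-subtrees $T'$ may still have $G(T',z)$'s that touch the same $W$-component: this cannot happen because any edge of $G$ between two vertices both outside $B_z$ lies in a bag $B_x$, and since $z$ only appears as a leaf of each $z$-subtree, such an $x$ belongs to exactly one $z$-subtree, so each connected component of $G - B_z$ sits entirely within one $G(T',z)$. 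Once this is noted, the bound follows immediately, and no real calculation is needed beyond citing \Cref{lem: z subtrees}.
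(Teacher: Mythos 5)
Your argument follows the same route as the paper --- invoke \Cref{lem: z subtrees}, set $W := B_z$, and translate the pieces $G(T',z)$ into $W$-components --- and your second paragraph, checking that each component of $G - B_z$ is confined to a single $G(T',z)$, is exactly the right thing to verify and is argued correctly. However, there is a genuine gap at the very first step: you assert that ``a normal tree-decomposition of $G$ with width $k$ exists since $\tw(G)\le k$ and every graph of minimum treewidth has a normal tree-decomposition of that width.'' That justification is inconsistent with itself. If $\tw(G)=\ell<k$, the normal tree-decomposition has width $\ell$, not $k$: every bag has size $\ell+1<k+1$. You therefore cannot invoke \Cref{lem: z subtrees} directly (its hypothesis requires treewidth exactly $k$), and even if you ran the argument at width $\ell$, you would produce $|W|=\ell+1<k+1$, which does not satisfy the lemma as stated. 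This is not a harmless slack: in the proof of \Cref{lem: w components induction}, the identity $|V(H)|=(k+1)+\sum_{C\in\mathcal{S}(X)}|V(C)|+\sum_{C\in\overline{\mathcal{S}(X)}}|V(C)|$ uses $|X|=k+1$ exactly, and the upper bound on $\sum_{C\in\overline{\mathcal{S}(X)}}|V(C)|$ fails if $|X|<k+1$.

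The fix is the paper's padding step: first pass to a supergraph $G'\supseteq G$ on the same vertex set obtained by adding edges until $\tw(G')=k$, take a normal tree-decomposition of $G'$ of width $k$, and apply \Cref{lem: z subtrees} to $G'$. Then set $W:=B_z$ (a set of $k+1$ vertices) and let $\mathcal{S}$ be the set of components of $G[\bigcup_{T'\in\C}G(T',z)]$. Since $G\subseteq G'$, each $W$-component of $G$ is contained in a $W$-component of $G'$, hence in a single $G(T',z)$, and the vertex counts still add up to $\sum_{T'\in\C}|G(T',z)|\in(\alpha,2\alpha]$. With this one adjustment your proof matches the paper's.
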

\begin{proof}
    Let $G'$ be the graph obtained from $G$ by adding edges if necessary such that $\tw(G')=k$. Let $(B_{x}:x\in V(T))$ be a normal tree-decomposition of $G'$ with width $k$. By \Cref{lem: z subtrees}, there is a vertex $z\in V(T)$ such that for some set $\C$ of $z$-subtrees of $T$, $\alpha<\sum_{T'\in\C}|G(T',z)|\leq2\alpha$. Let $W:=B_{z}$ and let $\mathcal{S}$ be the set of components of $G[\bigcup_{T'\in\C}G(T',z)]$. The proof is completed by observing that $|W|=k+1$ and $\alpha<\sum_{C\in\mathcal{S}}|V(C)|\leq2\alpha$.
\end{proof}

\begin{proof}[Proof of \Cref{lem: w components induction}]
    Proceed by induction on $w$ with $k$ fixed. The $w=0$ case is a direct application of \Cref{lem: w components basis}. Assume the result holds for some $w\geq0$. Let $\alpha\geq(2k+4)(\frac{3}{2})^{w+1}$ and let $G$ be a graph with $|V(G)|>\alpha+k+1$ and $\tw(G)\leq k$. Since $\alpha\geq(2k+4)(\frac{3}{2})^{w}$, by the induction hypothesis there exists $W\subseteq V(G)$ with $|W|\leq(k+1)(w+1)$ such that for some set $\mathcal{S}(W)$ of $W$-components of $G$, $\alpha<\sum_{C\in\mathcal{S}(W)}|V(C)|\leq(1+(\frac{2}{3})^{w})\alpha$. We are done if $\sum_{C\in\mathcal{S}(W)}|V(C)|\leq(1+(\frac{2}{3})^{w+1})\alpha$, so assume the contrary. Then
    \begin{align}\label{lem: w components induction: ineq 1}
        \Big(1+\Big(\frac{2}{3}\Big)^{w+1}\Big)\alpha<\sum_{C\in\mathcal{S}(W)}|V(C)|\leq\Big(1+\Big(\frac{2}{3}\Big)^{w}\Big)\alpha\;.
    \end{align}
    Define the graph $H:=\bigcup_{C\in\mathcal{S}(W)}C$ and let $\beta:=(\frac{2}{3})^{w+1}\frac{\alpha}{2}-k-1\geq1$. Then $\tw(H)\leq\tw(G)\leq k$. Since the graphs in $\mathcal{S}(W)$ are vertex-disjoint,
    \begin{align*}
        |V(H)|=\sum_{C\in\mathcal{S}(W)}|V(C)|>\alpha+\Big(\frac{2}{3}\Big)^{w+1}\alpha=\alpha+2(\beta+k+1)>\beta+k+1\;.
    \end{align*}
    Since $\beta\geq1$, \Cref{lem: w components basis} implies that there exists $X\subseteq V(H)$ with $|X|=k+1$ such that for some set $\mathcal{S}(X)$ of $X$-components of $H$,
    \begin{align}\label{lem: w components induction: ineq 2}
        \Big(\frac{2}{3}\Big)^{w+1}\frac{\alpha}{2}-k-1=\beta<\sum_{C\in\mathcal{S}(X)}|V(C)|\leq2\beta=\Big(\frac{2}{3}\Big)^{w+1}\alpha-2k-2\;.
    \end{align}
    Then \labelcref{lem: w components induction: ineq 1,lem: w components induction: ineq 2} together imply
    \begin{align}
        \alpha<\alpha+(k+1)&=\Big(1+\Big(\frac{2}{3}\Big)^{w+1}\Big)\alpha-(k+1)-\Big(\Big(\frac{2}{3}\Big)^{w+1}\alpha-2k-2\Big)\nonumber\\
        &<\sum_{C\in\mathcal{S}(W)}|V(C)|-(k+1)-\sum_{C\in\mathcal{S}(X)}|V(C)|\label{lem: w components induction: ineq 3}\\
        &\leq\Big(1+\Big(\frac{2}{3}\Big)^{w}\Big)\alpha-(k+1)-\Big(\Big(\frac{2}{3}\Big)^{w+1}\frac{\alpha}{2}-k-1\Big)\nonumber\\
        &=\alpha+\Big(\frac{2}{3}\Big)^{w}\alpha-\Big(\frac{2}{3}\Big)^{w+1}\frac{\alpha}{2}\nonumber\\
        &=\Big(1+\Big(\frac{2}{3}\Big)^{w+1}\Big)\alpha\;.\nonumber
    \end{align}
    Let $\overline{\mathcal{S}(X)}$ be the set of all $X$-components of $H$ not in $\mathcal{S}(X)$. Since $\{V(C):C\in\mathcal{S}(W)\}$ and $\{X\}\cup\{V(C):C\in\mathcal{S}(X)\}\cup\{V(C):C\in\overline{\mathcal{S}(X)}\}$ are both partitions of $V(H)$,
    \begin{align*}
        |V(H)|=\sum_{C\in\mathcal{S}(W)}|V(C)|=(k+1)+\sum_{C\in\mathcal{S}(X)}|V(C)|+\sum_{C\in\overline{\mathcal{S}(X)}}|V(C)|\;.
    \end{align*}
    Therefore, \labelcref{lem: w components induction: ineq 3} implies
    \begin{align*}
        \alpha<\sum_{C\in\overline{\mathcal{S}(X)}}|V(C)|\leq\Big(1+\Big(\frac{2}{3}\Big)^{w+1}\Big)\alpha\;.
    \end{align*}
    Thus, $\overline{\mathcal{S}(X)}$ is a desired set of $(W\cup X)$-components of $G$. The result follows by induction.
\end{proof}

\subsection{Useful functions}

This section proves a number of elementary facts that are direct extensions of the analogous observations in \Cref{sec: useful functions}. For integers $w,p\geq0$ define $\alpha(w,p)$ and $\gamma(w,p)$ as in \Cref{sec: useful functions}. For integers $w,p\geq0$ and $k\geq1$, define
\begin{align*}
    \beta_{k}(w):=(2k+4)\Big(\frac{3}{2}\Big)^{w}\qquad\text{and}\qquad\delta_{k}(w):=\Big(2+\Big(\frac{2}{3}\Big)^{w}\Big)\beta_{k}(w)\;.
\end{align*}
While the $k=1$ case corresponds to trees, $\beta_{1}(w)$ and $\delta_{1}(w)$ are slightly larger than $\beta(w)$ and $\delta(w)$ respectively. This is because of the $(k+1)(w+1)$ term in \Cref{lem: w components induction}.

For integers $w\geq0$ and $k\geq1$, observe that for all sufficiently large integers $p$, $\alpha(w,p)\geq\frac{1}{2}(\frac{3}{2})^{p-1}\geq\beta_{k}(w)$. Define $p_{w,k}$ to be the smallest integer such that $\alpha(w,p_{w,k}+1)\geq\beta_{k}(w)$. Since $\alpha(w,p)$ is an increasing function of $p$, if $p\geq p_{w,k}+1$ then $\alpha(w,p)\geq\beta_{k}(w)$, and if $p\leq p_{w,k}$ then $\alpha(w,p)<\beta_{k}(w)$.

Let $k$ be a fixed positive integer for the remainder of this section.

\begin{fact}\label{fact: pwk is linear in w}
     For every integer $w\geq0$, $\log_{2}(\frac{3}{2})\cdot w+\log_{2}(2k+4)\leq p_{w,k}\leq w+\log_{3/2}(6k+12)$.
\end{fact}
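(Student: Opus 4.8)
The plan is to mirror the proof of \Cref{fact: pw is linear in w} almost verbatim, replacing $\beta(w)=3(\tfrac32)^{w}$ by $\beta_{k}(w)=(2k+4)(\tfrac32)^{w}$ throughout. First I would observe that, since $\alpha(w,p)$ is increasing in $p$ (as already noted in \Cref{sec: useful functions}) and $p_{w,k}$ is by definition the least integer with $\alpha(w,p_{w,k}+1)\geq\beta_{k}(w)$, we have $p_{w,k}=\lceil x\rceil$, where $x$ is the unique real number satisfying $\alpha(w,x+1)=\beta_{k}(w)$. Substituting the formula $\alpha(w,x+1)=\frac{1}{1+(2/3)^{w}}\big(\frac{2+(2/3)^{w}}{1+(2/3)^{w}}\big)^{x}$ and solving for $x$ gives
\[
x=\frac{a}{b},\qquad a:=\log\Big(\Big(1+\Big(\tfrac23\Big)^{w}\Big)(2k+4)\Big(\tfrac32\Big)^{w}\Big),\qquad b:=\log\Big(\frac{2+(2/3)^{w}}{1+(2/3)^{w}}\Big).
\]

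For the lower bound I would use $1+(\tfrac23)^{w}>1$ to get $a>\log\big((2k+4)(\tfrac32)^{w}\big)$, and $\tfrac{2+(2/3)^{w}}{1+(2/3)^{w}}\leq 2$ (equivalently $0\leq(\tfrac23)^{w}$) to get $b\leq\log 2$; hence $p_{w,k}\geq x=\tfrac ab\geq\log_{2}(\tfrac32)\cdot w+\log_{2}(2k+4)$. For the upper bound I would use $1+(\tfrac23)^{w}\leq 2$ to get $a\leq\log\big((4k+8)(\tfrac32)^{w}\big)$, and the fact that $t\mapsto\frac{2+t}{1+t}=1+\frac1{1+t}$ is decreasing on $(0,1]$, so that with $t=(\tfrac23)^{w}\leq 1$ we obtain $b\geq\log\tfrac32$. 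This yields $x=\tfrac ab\leq w+\log_{3/2}(4k+8)$, and therefore $p_{w,k}=\lceil x\rceil\leq x+1\leq w+\log_{3/2}(4k+8)+\log_{3/2}(\tfrac32)=w+\log_{3/2}(6k+12)$, since $6k+12=\tfrac32(4k+8)$.

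I expect no genuine obstacle here: the argument is routine and the only monotonicity facts it relies on (that $\alpha(w,\cdot)$ is increasing and that $\frac{2+t}{1+t}\in[\tfrac32,2)$ for $t\in(0,1]$) are elementary and already used in \Cref{sec: useful functions}. The only point requiring care is bookkeeping of constants — $(2k+4)$ takes the role of $3$, $(4k+8)$ the role of $6$, and the extra $+1$ from the ceiling is absorbed via $\log_{3/2}(6k+12)=\log_{3/2}(4k+8)+1$, exactly paralleling the passage from $\log_{3/2}(6)$ to $\log_{3/2}(9)$ in \Cref{fact: pw is linear in w}.
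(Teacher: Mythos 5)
Your proposal is correct and is essentially identical to the paper's proof: both set up $p_{w,k}=\lceil x\rceil$ with $x=a/b$ for the same $a$ and $b$, bound the lower and upper ends using $1<1+(\tfrac23)^{w}\leq 2$ and $\tfrac32\leq\tfrac{2+(2/3)^{w}}{1+(2/3)^{w}}\leq 2$, and then absorb the $+1$ from the ceiling via $\log_{3/2}(4k+8)+1=\log_{3/2}(6k+12)$. The only difference is that you spell out the monotonicity of $t\mapsto\tfrac{2+t}{1+t}$, which the paper leaves implicit.
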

\begin{proof}
    Observe that $p_{w,k}=\lceil x\rceil$ where $\alpha(w,x+1)=\beta_{k}(w)$. Rearranging $\alpha(w,x+1)=\beta_{k}(w)$ for $x$ gives 
    \begin{align*}
        x=\frac{\log\Big((1+(\frac{2}{3})^{w})\cdot(2k+4)(\frac{3}{2})^{w}\Big)}{\log\Big(\frac{2+(\frac{2}{3})^{w}}{1+(\frac{2}{3})^{w}}\Big)}\;.
    \end{align*}
    Let $a=\log((1+(\frac{2}{3})^{w})\cdot(2k+4)(\frac{3}{2})^{w})$ and $b=\log\Big(\frac{2+(\frac{2}{3})^{w}}{1+(\frac{2}{3})^{w}}\Big)$. Then, $a>\log((2k+4)(\frac{3}{2})^{w})$ and $b\leq\log(2)$, implying $\log_{2}(\frac{3}{2})\cdot w+\log_{2}(2k+4)\leq \frac{a}{b}\leq p_{w,k}$. Furthermore,
    \begin{align*}
        a&\leq\log\Big(\Big(1+\Big(\frac{2}{3}\Big)^{0}\Big)\cdot(2k+4)\Big(\frac{3}{2}\Big)^{w}\Big)=\log\Big((4k+8)\Big(\frac{3}{2}\Big)^{w}\Big)\quad\text{and}\\
        b&\geq\log\Big(\frac{2+(\frac{2}{3})^{0}}{1+(\frac{2}{3})^{0}}\Big)=\log\Big(\frac{3}{2}\Big)\;,
    \end{align*}
    implying $\frac{a}{b}\leq w+\log_{3/2}(4k+8)$. Thus, $p_{w,k}\leq w+\log_{3/2}(4k+8)+1=w+\log_{3/2}(6k+12)$.
\end{proof}

\begin{fact}\label{fact: gamma >= alpha + k + 4}
    For all integers $w\geq0$ and $p\geq p_{w,k}+1$, $\gamma(w,p)\geq\alpha(w,p)+k+4$.
\end{fact}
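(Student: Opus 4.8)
The plan is to mirror the proof of \Cref{fact: gamma >= alpha + 3} verbatim, replacing the bound $\beta(w)\geq 3$ with the analogous bound $\beta_k(w)\geq k+4$. First I would invoke the definition of $p_{w,k}$: since $p\geq p_{w,k}+1$, we have $\alpha(w,p)\geq\beta_k(w)=(2k+4)(\frac{3}{2})^w\geq 2k+4\geq k+4$ for all $w\geq 0$ and all $k\geq 1$. Then I would use the second identity in \labelcref{eq: gamma and alpha relations}, namely $\gamma(w,p)=(2+(\frac{2}{3})^w)\alpha(w,p)$, to compute
\begin{align*}
    \gamma(w,p)-\alpha(w,p)=\Big(2+\Big(\frac{2}{3}\Big)^w\Big)\alpha(w,p)-\alpha(w,p)=\Big(1+\Big(\frac{2}{3}\Big)^w\Big)\alpha(w,p)\geq\alpha(w,p)\geq k+4\;.
\end{align*}
Rearranging gives $\gamma(w,p)\geq\alpha(w,p)+k+4$, as required.

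There is essentially no obstacle here: the only thing to be slightly careful about is which constant lower bound one squeezes out of $\beta_k(w)$. Since the factor $1+(\frac{2}{3})^w$ lies in $(1,2]$, one only recovers $\gamma(w,p)-\alpha(w,p)\geq\alpha(w,p)$, so the needed slack $k+4$ must already be present in $\alpha(w,p)$ itself; fortunately $\beta_k(w)\geq 2k+4\geq k+4$ provides exactly this (indeed with room to spare). One could alternatively note $\beta_k(w)=(2k+4)(\frac{3}{2})^w\geq 2k+4$ since $(\frac{3}{2})^w\geq 1$, and $2k+4\geq k+4$ since $k\geq 1$. This makes the whole argument a two-line computation analogous to \Cref{fact: gamma >= alpha + 3}, and no new ideas are needed.
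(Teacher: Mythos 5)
Your proof is correct and follows essentially the same route as the paper: both use $\alpha(w,p)\geq\beta_k(w)$ from the definition of $p_{w,k}$, the identity $\gamma(w,p)=(2+(\frac{2}{3})^w)\alpha(w,p)$ from \labelcref{eq: gamma and alpha relations}, and the bound $\beta_k(w)\geq 2k+4\geq k+4$. (Minor slip: the identity you quote is the \emph{first}, not the second, in \labelcref{eq: gamma and alpha relations}, but this does not affect the argument.)
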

\begin{proof}
    By the definition of $p_{w,k}$, $\alpha(w,p)\geq\beta_{k}(w)$ for all $w\geq0$ and $p\geq p_{w,k}+1$. By \labelcref{eq: gamma and alpha relations},
    \begin{align*}
        \gamma(w,p)-\alpha(w,p)=\Big(2+\Big(\frac{2}{3}\Big)^{w}\Big)\alpha(w,p)-\alpha(w,p)&=\Big(1+\Big(\frac{2}{3}\Big)^{w}\Big)\alpha(w,p)\\
        &\geq\Big(1+\Big(\frac{2}{3}\Big)^{w}\Big)\beta_{k}(w)\\
        &>\beta_{k}(w)\;.
    \end{align*}
    Since $\beta_{k}(w)\geq2k+4$, $\gamma(w,p)-\alpha(w,p)\geq k+4$.
\end{proof}

\begin{fact}\label{fact: gamma < deltak}
     For every integer $w\geq0$, $\gamma(w,p_{w,k})<\delta_{k}(w)$.
\end{fact}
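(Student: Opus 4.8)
The plan is to follow the proof of \Cref{fact: gamma < delta} essentially verbatim, with $\beta$, $\delta$, $p_w$ replaced by $\beta_k$, $\delta_k$, $p_{w,k}$. First I would use that $p_{w,k}$ is defined as the smallest integer with $\alpha(w,p_{w,k}+1)\geq\beta_k(w)$; since $\alpha(w,p)$ is increasing in $p$, minimality forces $\alpha(w,p_{w,k})<\beta_k(w)$. This is exactly the consequence recorded immediately after the definition of $p_{w,k}$ in the preceding subsection, so no extra work is needed to establish it.

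Next I would invoke the left-hand identity in \labelcref{eq: gamma and alpha relations}, namely $\gamma(w,p)=(2+(\frac{2}{3})^{w})\,\alpha(w,p)$, specialised to $p=p_{w,k}$. Since the factor $2+(\frac{2}{3})^{w}$ is strictly positive, multiplying the strict inequality $\alpha(w,p_{w,k})<\beta_k(w)$ by it preserves the direction, yielding $\gamma(w,p_{w,k})=(2+(\frac{2}{3})^{w})\,\alpha(w,p_{w,k})<(2+(\frac{2}{3})^{w})\,\beta_k(w)$. Finally, the right-hand side is exactly $\delta_k(w)$ by the definition $\delta_k(w)=(2+(\frac{2}{3})^{w})\,\beta_k(w)$, so $\gamma(w,p_{w,k})<\delta_k(w)$, which is the claim.

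I do not expect any real obstacle here: the only points requiring a moment's care are that the multiplying factor is strictly positive (so the strict inequality carries over) and that $\alpha(w,\cdot)$ is monotone increasing — both already established in the text. Structurally the argument is identical to that of \Cref{fact: gamma < delta}; the sole change, reflecting the $(k+1)(w+1)$ bound in \Cref{lem: w components induction}, is that $\beta_k(w)=(2k+4)(\frac{3}{2})^{w}$ plays the role of $\beta(w)=3(\frac{3}{2})^{w}$, and since every step used is uniform in whatever constant sits inside $\beta$, this substitution is harmless.
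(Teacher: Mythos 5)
Your proof is correct and is essentially identical to the paper's: both use $\alpha(w,p_{w,k})<\beta_k(w)$ (a direct consequence of the minimality in the definition of $p_{w,k}$), then multiply by $2+(\tfrac{2}{3})^{w}$ via the identity $\gamma(w,p)=(2+(\tfrac{2}{3})^{w})\alpha(w,p)$ from \labelcref{eq: gamma and alpha relations} to obtain $\gamma(w,p_{w,k})<\delta_k(w)$.
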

\begin{proof}
    By the definition of $p_{w,k}$, $\alpha(w,p_{w,k})<\beta_{k}(w)$. By \labelcref{eq: gamma and alpha relations},
    \[\gamma(w,p_{w,k})=\Big(2+\Big(\frac{2}{3}\Big)^{w}\Big)\alpha(w,p_{w,k})<\Big(2+\Big(\frac{2}{3}\Big)^{w}\Big)\beta_{k}(w)=\delta_{k}(w)\;.\qedhere\]
\end{proof}

\begin{fact}\label{fact: deltak upper bound}
     For every integer $w\geq0$, $\lceil \delta_{k}(w)\rceil\leq(8k+16)(\frac{3}{2})^{w}$.
\end{fact}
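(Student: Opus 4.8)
The plan is to mirror the proof of \Cref{fact: delta upper bound} almost verbatim, now tracking the extra factor coming from the $2k+4$ in $\beta_k$. First I would unfold the definitions: by definition $\delta_k(w)=(2+(\frac{2}{3})^w)\beta_k(w)$, and since $0<(\frac{2}{3})^w\leq 1$ for all $w\geq 0$, we have $2+(\frac{2}{3})^w\leq 3$, hence $\delta_k(w)\leq 3\beta_k(w)$.

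Next I would take the ceiling. Since $\beta_k(w)=(2k+4)(\frac{3}{2})^w\geq 6$ (as $k\geq 1$ and $w\geq 0$), the quantity $3\beta_k(w)$ is at least $18$, so passing to the ceiling costs at most a factor of $\tfrac{4}{3}$: explicitly $\lceil 3\beta_k(w)\rceil\leq 3\beta_k(w)+1\leq 4\beta_k(w)$ because $\beta_k(w)\geq 1$. Chaining these gives
\begin{align*}
    \lceil\delta_k(w)\rceil=\Big\lceil\Big(2+\Big(\tfrac{2}{3}\Big)^w\Big)\beta_k(w)\Big\rceil\leq\lceil 3\beta_k(w)\rceil\leq 4\beta_k(w)=4(2k+4)\Big(\tfrac{3}{2}\Big)^w=(8k+16)\Big(\tfrac{3}{2}\Big)^w\;.
\end{align*}

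There is essentially no obstacle here; the only thing to be careful about is that the ceiling of a product is bounded by $4\beta_k(w)$ rather than something like $3\beta_k(w)+1$, which requires $\beta_k(w)\geq 1$ — and this holds trivially since $\beta_k(w)\geq 6$. So the structure is: bound $2+(\frac23)^w$ by $3$, bound $\lceil 3\beta_k(w)\rceil$ by $4\beta_k(w)$ using $\beta_k(w)\geq 1$, and substitute the definition of $\beta_k$. This is the exact analogue of \Cref{fact: delta upper bound} with $3$ replaced by $2k+4$ throughout, so I would present it as a two-line computation with a \qedhere at the end of the display.
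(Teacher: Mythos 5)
Your proof is correct and follows exactly the same two-step bound as the paper: bound $2+(\frac{2}{3})^w$ by $3$, then absorb the ceiling using $\lceil 3\beta_k(w)\rceil\leq 4\beta_k(w)$. The extra remark that $\beta_k(w)\geq 1$ justifies the ceiling step is a small added clarification, but the argument is identical to the paper's.
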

\begin{proof}
    By the definition of $\delta_{k}(w)$ and $\beta_{k}(w)$,
    \[\lceil \delta_{k}(w)\rceil=\Big\lceil\Big(2+\Big(\frac{2}{3}\Big)^{w}\Big)\beta_{k}(w)\Big\rceil\leq\lceil3\beta_{k}(w)\rceil\leq4\beta_{k}(w)=(8k+16)\Big(\frac{3}{2}\Big)^{w}\;.\qedhere\]
\end{proof}

\begin{fact}\label{fact: exponential of pwk}
     For every integer $w\geq0$, $2^{-p_{w,k}}\leq\frac{1}{2k+4}(\frac{2}{3})^{w}$.
\end{fact}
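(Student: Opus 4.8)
The plan is to mimic exactly the proof of \Cref{fact: exponential of pw}, using the new lower bound on $p_{w,k}$ supplied by \Cref{fact: pwk is linear in w} in place of the one from \Cref{fact: pw is linear in w}. First I would invoke \Cref{fact: pwk is linear in w} to get $\log_{2}(\tfrac{3}{2})\cdot w + \log_{2}(2k+4) \leq p_{w,k}$. Since $2^{-x}$ is a decreasing function of $x$, this gives $2^{-p_{w,k}} \leq 2^{-\log_{2}(3/2)\cdot w - \log_{2}(2k+4)}$. Then I would simplify the right-hand side: $2^{-\log_{2}(3/2)\cdot w} = (\tfrac{3}{2})^{-w} = (\tfrac{2}{3})^{w}$ and $2^{-\log_{2}(2k+4)} = \tfrac{1}{2k+4}$, so the product is $\tfrac{1}{2k+4}(\tfrac{2}{3})^{w}$, which is exactly the claimed bound.

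There is essentially no obstacle here; the only thing to be mildly careful about is that $\log$ in the paper means $\log_{e}$, so in the displayed chain of inequalities I should keep the base-$2$ logarithms explicit (as $\log_{2}$), exactly as in the proof of \Cref{fact: exponential of pw}, rather than writing a bare $\log$. I would present the argument as a single short display:
\begin{align*}
    2^{-p_{w,k}}\leq 2^{-\log_{2}(\frac{3}{2})\cdot w-\log_{2}(2k+4)}=\frac{1}{2k+4}\Big(\frac{2}{3}\Big)^{w}\;.
\end{align*}
This completes the proof.
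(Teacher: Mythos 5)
Your proposal is correct and matches the paper's proof essentially verbatim: invoke \Cref{fact: pwk is linear in w} for the lower bound on $p_{w,k}$, apply the decreasing function $2^{-x}$, and simplify. Nothing to add.
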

\begin{proof}
    By \Cref{fact: pwk is linear in w}, $\log_{2}(\frac{3}{2})\cdot w+\log_{2}(2k+4)\leq p_{w,k}$. Therefore,
    \[2^{-p_{w,k}}\leq2^{-\log_{2}(\frac{3}{2})\cdot w-\log_{2}(2k+4)}=\frac{1}{2k+4}\Big(\frac{2}{3}\Big)^{w}\;.\qedhere\]
\end{proof}

\subsection{The universal graph $G^{k}_{w,p}$}

For integers $w\geq0$, $k\geq1$ and $p\geq p_{w,k}$, define the graph $G^{k}_{w,p}$ recursively. Let $G^{k}_{w,p_{w,k}}:=K_{\lceil \delta_{k}(w)\rceil}$ and for $p\geq p_{w,k}+1$, let $G^{k}_{w,p}$ be the graph formed by adding all the possible edges between a complete graph $K_{(k+1)(w+1)}$ and two disjoint copies of $G^{k}_{w,p-1}$, as shown in \Cref{fig: constructing Gkwp}.

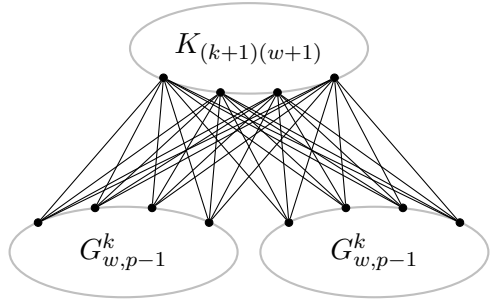
\begin{figure}[!h]
    \centering
    \begin{tikzpicture}[/tikz/xscale=1.5, /tikz/yscale=1.5]

        \begin{scope}[every node/.style={ellipse, minimum width=3cm, minimum height=1.2cm, thick,draw, lightgray}]
            \node (complete) at (0,1.5) {\textcolor{black}{$K_{(k+1)(w+1)}$}};
            \node (G1) at (-1.1,-0.3) {\textcolor{black}{$G^{k}_{w,p-1}$}};
            \node (G2) at (1.1,-0.3) {\textcolor{black}{$G^{k}_{w,p-1}$}};
        \end{scope}
        
        \begin{scope}[every node/.style={draw, shape = circle, fill = black, minimum size = 0.1cm, inner sep=1pt}]
            \node (u1) at (-0.75,1.24) {};
            \node (u2) at (-0.25,1.11) {};
            \node (u3) at (0.25,1.11) {};
            \node (u4) at (0.75,1.24) {};
        \end{scope}
    
        \begin{scope}[every node/.style={draw, shape = circle, fill = black, minimum size = 0.1cm, inner sep=1pt}, shift={(-1.1,-0.98-0.3)}]
            \node (v1) at (-0.75,1.24) {};
            \node (v2) at (-0.25,1.37) {};
            \node (v3) at (0.25,1.37) {};
            \node (v4) at (0.75,1.24) {};
        \end{scope}
    
        \begin{scope}[every node/.style={draw, shape = circle, fill = black, minimum size = 0.1cm, inner sep=1pt}, shift={(1.1,-0.98-0.3)}]
            \node (w1) at (-0.75,1.24) {};
            \node (w2) at (-0.25,1.37) {};
            \node (w3) at (0.25,1.37) {};
            \node (w4) at (0.75,1.24) {};
        \end{scope}
    
        \begin{scope}[every edge/.style={draw}]
            \foreach \i in {1,2,3,4} {
                \foreach \j in {1,2,3,4} {
                    \path [-] (u\i) edge (v\j);
                    \path [-] (u\i) edge (w\j);
                }
            }
        \end{scope}
    \end{tikzpicture}

    \caption{Constructing $G^{k}_{w,p}$}
    \label{fig: constructing Gkwp}
\end{figure}

\begin{theorem}\label{thm: construction proof for Gkwp}
     For all integers $w\geq0$, $k\geq1$ and $p\geq p_{w,k}$, $G^{k}_{w,p}$ contains every graph with at most $\gamma(w,p)$ vertices and treewidth at most $k$.
\end{theorem}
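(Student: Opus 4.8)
The plan is to follow the proof of \Cref{thm: construction proof for Gwp} almost verbatim, replacing ``number of edges'' by ``number of vertices'', the edge-count splitting of a tree by the vertex-count splitting of the $W$-components of $G$, and \Cref{lem: w subtrees induction} by \Cref{lem: w components induction}. I would induct on $p\geq p_{w,k}$ with $w$ and $k$ fixed. For the base case $p=p_{w,k}$, \Cref{fact: gamma < deltak} gives $\gamma(w,p_{w,k})<\delta_{k}(w)$, so every graph on at most $\gamma(w,p_{w,k})$ vertices has at most $\lceil\delta_{k}(w)\rceil$ vertices and hence is a subgraph of $G^{k}_{w,p_{w,k}}=K_{\lceil\delta_{k}(w)\rceil}$.

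For the induction step, take $p\geq p_{w,k}+1$, assume the result for $p-1$, and let $G$ be a graph with at most $\gamma(w,p)$ vertices and $\tw(G)\leq k$. Since $\gamma(w,p)\geq\alpha(w,p)+k+4$ by \Cref{fact: gamma >= alpha + k + 4}, I would first enlarge $G$ by repeatedly adding a new vertex joined to a single existing vertex (starting from one isolated vertex if $G$ has no vertices), an operation that preserves having treewidth at most $k$; this lets me assume $\alpha(w,p)+k+1<|V(G)|\leq\gamma(w,p)$, and any graph containing the enlarged graph contains $G$. As $p\geq p_{w,k}+1$ we have $\alpha(w,p)\geq\beta_{k}(w)=(2k+4)(\tfrac{3}{2})^{w}$, so \Cref{lem: w components induction} applied with $\alpha:=\alpha(w,p)$ produces a set $W\subseteq V(G)$ with $|W|\leq(k+1)(w+1)$ and a set $\mathcal{S}$ of $W$-components of $G$ with
\[
\alpha(w,p)<\sum_{C\in\mathcal{S}}|V(C)|\leq\Big(1+\Big(\tfrac{2}{3}\Big)^{w}\Big)\alpha(w,p)=\gamma(w,p-1),
\]
the last equality being \Cref{eq: gamma and alpha relations}. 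Set $G_{1}:=\bigcup_{C\in\mathcal{S}}C$ and let $G_{2}$ be the union of the $W$-components of $G$ not in $\mathcal{S}$; both are induced subgraphs of $G$ and hence have treewidth at most $k$, and $G$ has no edge between $V(G_{1})$ and $V(G_{2})$, since such an edge would join two distinct components of $G-W$. The displayed inequality gives $|V(G_{1})|\leq\gamma(w,p-1)$, and since $\{W\}\cup\{V(C):C\in\mathcal{S}\}\cup\{V(C):C\notin\mathcal{S}\}$ partitions $V(G)$,
\[
|V(G_{2})|=|V(G)|-|W|-|V(G_{1})|<\gamma(w,p)-\alpha(w,p)=\gamma(w,p-1),
\]
again by \Cref{eq: gamma and alpha relations}.

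By the induction hypothesis $G_{1}$ and $G_{2}$ are each contained in $G^{k}_{w,p-1}$; I would embed them into the two disjoint copies of $G^{k}_{w,p-1}$ in $G^{k}_{w,p}$ and embed $W$ injectively (possible since $|W|\leq(k+1)(w+1)$) into the $K_{(k+1)(w+1)}$. Every edge of $G$ lies inside $G_{1}$, inside $G_{2}$, inside $W$, or between $W$ and $V(G)\setminus W$; since $K_{(k+1)(w+1)}$ is complete and is joined completely to each copy of $G^{k}_{w,p-1}$, every such edge is present in the image, so $G$ is contained in $G^{k}_{w,p}$, completing the induction. The structural content is carried entirely by \Cref{lem: w components induction}, so the only places needing care are routine bookkeeping: that the enlargement preserves treewidth and handles the empty-graph case, that $G_{1}$ and $G_{2}$ are vertex-disjoint with no edges of $G$ between them (so they can safely be sent to different copies), and that every edge incident with $W$ is captured by the complete join. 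I do not expect any genuine obstacle beyond this.
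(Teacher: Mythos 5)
Your proof is correct and follows essentially the same approach as the paper: induction on $p$ with the same base case, the same enlargement to bring $|V(G)|$ into the range $(\alpha(w,p)+k+1,\gamma(w,p)]$, the same application of \Cref{lem: w components induction} to split $G-W$ into two parts each of at most $\gamma(w,p-1)$ vertices, and the same embedding into the two copies of $G^{k}_{w,p-1}$ plus the $K_{(k+1)(w+1)}$. The extra remarks you add (treewidth-preserving enlargement, vertex-disjointness of $G_{1}$ and $G_{2}$, and the case analysis of edges) are just more explicit bookkeeping of what the paper leaves implicit.
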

\begin{proof}
    Proceed by induction on $p\geq p_{w,k}$ with $w$ and $k$ fixed. If $p=p_{w,k}$, then \Cref{fact: gamma < deltak} says $\gamma(w,p_{w,k})<\delta_{k}(w)$. Every graph on at most $\gamma(w,p_{w,k})$ vertices is contained in $G^{k}_{w,p_{w,k}}=K_{\lceil \delta_{k}(w)\rceil}$, so the base case holds. Now assume $p\geq p_{w,k}+1$ and the claim holds for all lower values of $p$. Let $G$ be a graph on at most $\gamma(w,p)$ vertices with $\tw(G)\leq k$. \Cref{fact: gamma >= alpha + k + 4} says $\gamma(w,p)\geq\alpha(w,p)+k+4$, so we can add vertices and edges to $G$ if necessary (without increasing the treewidth) so that $\alpha(w,p)+k+1<|V(G)|\leq\gamma(w,p)$. By the definition of $p_{w,k}$, we have $\alpha(w,p)\geq\beta_{k}(w)=(2k+4)(\frac{3}{2})^{w}$. By \Cref{lem: w components induction}, there exists a set $W\subseteq V(G)$ with $|W|\leq(k+1)(w+1)$ such that some set $\mathcal{S}$ of $W$-components of $G$ satisfies:
    \begin{align*}
        \alpha(w,p)<\sum_{C\in\mathcal{S}}|V(C)|\leq\Big(1+\Big(\frac{2}{3}\Big)^{w}\Big)\alpha(w,p)=\gamma(w,p-1)\;,
    \end{align*}
    where the last equality follows from \labelcref{eq: gamma and alpha relations}. Let $H_{1}$ be the union of the $W$-components in $\mathcal{S}$, and let $H_{2}$ be the union of the $W$-components not in $\mathcal{S}$. Since the graphs in $\mathcal{S}$ are vertex-disjoint, the above inequality implies $\alpha(w,p)<|V(H_{1})|\leq\gamma(w,p-1)$. Furthermore, since $\{W,V(H_{1}),V(H_{2})\}$ is a partition of $V(G)$, 
    \begin{align*}
        |V(H_{2})|=|V(G)|-|W|-|V(H_{1})|\leq|V(G)|-|V(H_{1})|&\leq\gamma(w,p)-\alpha(w,p)\\
        &=\Big(2+\Big(\frac{2}{3}\Big)^{w}\Big)\alpha(w,p)-\alpha(w,p)\\
        &=\gamma(w,p-1)\;,
    \end{align*}
    where the last two equalities follow from \labelcref{eq: gamma and alpha relations}. Note that for each $i\in\{1,2\}$, $\tw(H_{i})\leq\tw(G)\leq k$. By the induction hypothesis, $H_{i}$ is contained in $G^{k}_{w,p-1}$. Map $H_{i}$ to the $i$-th copy of $G^{k}_{w,p-1}$ in $G^{k}_{w,p}$. Map $W$ to the $K_{(k+1)(w+1)}$ in $G^{k}_{w,p}$. Since the $K_{(k+1)(w+1)}$ is complete to each copy of $G^{k}_{w,p-1}$ in $G^{k}_{w,p}$, this shows that $G$ is contained in $G^{k}_{w,p}$. The claim holds by induction. 
\end{proof}


We now estimate $|V(G^{k}_{w,p})|$ and $|E(G^{k}_{w,p})|$. Let $m:=p-p_{w,k}$. 

\begin{lemma}\label{lem: general estimate for vertices in Gkwp}
    For every integer $k\geq1$, for every integer $w$ sufficiently large relative to $k$, if $p\geq p_{w,k}$, then $|V(G^{k}_{w,p})|\leq5\cdot2^{p}$.
\end{lemma}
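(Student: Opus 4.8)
The plan is to mirror the proof of \Cref{lem: general estimate for vertices in Gwp}, tracking the dependence on $k$. First I would unpack the recursive structure of $G^k_{w,p}$: iterating the construction $m=p-p_{w,k}$ times shows that $G^k_{w,p}$ is built from $2^m-1$ copies of $K_{(k+1)(w+1)}$ together with $2^m$ copies of the base graph $G^k_{w,p_{w,k}}=K_{\lceil\delta_k(w)\rceil}$ (exactly as in \Cref{fig: example of Gwp} for the tree case, with the cliques enlarged). Summing vertices over these pieces gives
\begin{align*}
    |V(G^k_{w,p})|\leq(2^m-1)(k+1)(w+1)+2^m\lceil\delta_k(w)\rceil<2^m(k+1)(w+1)+2^m\lceil\delta_k(w)\rceil\;.
\end{align*}

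Next I would bound each of the two terms. Since the statement allows $w$ to be large relative to $k$, for all such $w$ we have $(k+1)(w+1)\leq(\frac{3}{2})^w$ (the exponential eventually dominates the linear-in-$w$ factor, and the threshold depends only on $k$). By \Cref{fact: deltak upper bound}, $\lceil\delta_k(w)\rceil\leq(8k+16)(\frac{3}{2})^w$. Combining these,
\begin{align*}
    |V(G^k_{w,p})|\leq2^m\Big(\frac{3}{2}\Big)^w+2^m(8k+16)\Big(\frac{3}{2}\Big)^w=2^m(8k+17)\Big(\frac{3}{2}\Big)^w\;.
\end{align*}

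Finally I would convert the $2^m$ factor: by \Cref{fact: exponential of pwk}, $2^{-p_{w,k}}\leq\frac{1}{2k+4}(\frac{2}{3})^w$, so $2^m=2^{p-p_{w,k}}\leq2^p\cdot\frac{1}{2k+4}(\frac{2}{3})^w$. Substituting and cancelling the $(\frac{2}{3})^w(\frac{3}{2})^w$ gives $|V(G^k_{w,p})|\leq\frac{8k+17}{2k+4}\cdot2^p$, and since $8k+17\leq10k+20=5(2k+4)$ for all $k\geq1$, this is at most $5\cdot2^p$, as required. There is no real obstacle here; the only point needing care is recording precisely where "$w$ sufficiently large relative to $k$" is used, namely the inequality $(k+1)(w+1)\leq(\frac{3}{2})^w$, and checking that the final constant-ratio bound $\frac{8k+17}{2k+4}\leq5$ holds uniformly in $k$.
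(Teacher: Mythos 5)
Your argument mirrors the paper's proof step for step: counting $2^m-1$ copies of $K_{(k+1)(w+1)}$ and $2^m$ copies of $K_{\lceil\delta_k(w)\rceil}$, applying \Cref{fact: deltak upper bound} and \Cref{fact: exponential of pwk}, and using $(k+1)(w+1)\leq(\tfrac{3}{2})^w$ for $w$ large relative to $k$. The only addition is your explicit verification that $\frac{8k+17}{2k+4}\leq 5$ for all $k\geq 1$, which the paper asserts without showing; this is a welcome bit of extra care but does not change the argument.
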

\begin{proof}
    Since $G^{k}_{w,p}$ contains $2^{m}-1$ copies of $K_{(k+1)(w+1)}$ and $2^{m}$ copies of $G^{k}_{w,p_{w,k}}$ (see \Cref{fig: example of Gkwp}),
    \begin{align*}
        |V(G^{k}_{w,p})|\leq(2^{m}-1)\cdot|V(K_{(k+1)(w+1)})|+2^{m}\cdot|V(G^{k}_{w,p_{w,k}})|<2^{m}(k+1)(w+1)+2^{m}\lceil\delta_{k}(w)\rceil\;.
    \end{align*}
    \Cref{fact: deltak upper bound} says that $\lceil\delta_{k}(w)\rceil\leq(8k+16)(\frac{3}{2})^{w}$. Furthermore, when $w$ is sufficiently large relative to $k$, $(k+1)(w+1)\leq(\frac{3}{2})^{w}$. Therefore,
    \begin{align*}
        |V(G^{k}_{w,p})|\leq2^{m}\cdot\Big(\frac{3}{2}\Big)^{w}+2^{m}\cdot(8k+16)\Big(\frac{3}{2}\Big)^{w}=2^{m}\cdot(8k+17)\Big(\frac{3}{2}\Big)^{w}\;.
    \end{align*}
    By \Cref{fact: exponential of pwk}, $2^{m}\leq2^{p}\frac{1}{2k+4}(\frac{2}{3})^{w}$. Thus, $|V(G^{k}_{w,p})|\leq\frac{8k+17}{2k+4}\cdot2^{p}<5\cdot2^{p}$ as desired.
\end{proof}

\begin{figure}[!h]
    \centering
    \begin{tikzpicture}[/tikz/xscale=1.5, /tikz/yscale=1.5]
    
        \begin{scope}[every node/.style={fill=black, shape = circle, minimum size = 0.1cm, inner sep=1pt, draw},shift={(0,3)}]
            \node (u2) at (-0.87/4,-0.5/4) {};
            \node (u3) at (0.87/4,-0.5/4) {};
        \end{scope}
    
        \begin{scope}[every node/.style={fill=black, shape = circle, minimum size = 0.1cm, inner sep=1pt, draw},shift={(-1.5,2)}]
            \node (v2) at (-0.87/4,-0.5/4) {};
            \node (v3) at (0.87/4,-0.5/4) {};
        \end{scope}
    
        \begin{scope}[every node/.style={fill=black, shape = circle, minimum size = 0.1cm, inner sep=1pt, draw},shift={(1.5,2)}]
            \node (w2) at (-0.87/4,-0.5/4) {};
            \node (w3) at (0.87/4,-0.5/4) {};
        \end{scope}
    
        \begin{scope}[every node/.style={fill=black, shape = circle, minimum size = 0.1cm, inner sep=1pt, draw},shift={(0.7,0)}]
            \node (x2) at (-0.87/4,-0.5/4) {};
            \node (x3) at (0.87/4,-0.5/4) {};
            \node (x5) at (-0.87/4+1.6,-0.5/4) {};
            \node (x6) at (0.87/4+1.6,-0.5/4) {};
        \end{scope}
    
        \begin{scope}[every node/.style={fill=black, shape = circle, minimum size = 0.1cm, inner sep=1pt, draw},shift={(-0.7,0)}]
            \node (y2) at (-0.87/4,-0.5/4) {};
            \node (y3) at (0.87/4,-0.5/4) {};
            \node (y5) at (-0.87/4-1.6,-0.5/4) {};
            \node (y6) at (0.87/4-1.6,-0.5/4) {};
        \end{scope}
    
        \begin{scope}[every node/.style={ellipse, minimum width=1.2cm, minimum height=0.9cm,very thick,draw, lightgray},shift={(0,-0.1)}]
            \node (K1) at (0,3) {};
        \end{scope}
    
        \begin{scope}[shift={(0,-0.2)}]
            \node (nameK1) at (0,3.6) {$K_{(k+1)(w+1)}$};
        \end{scope}
    
        \begin{scope}[every node/.style={ellipse, minimum width=1.2cm, minimum height=0.9cm, very thick,draw, lightgray},shift={(0,-0.1)}]
            \node (K2) at (-1.5,2) {};
            \node (K3) at (1.5,2) {};
        \end{scope}
    
        \begin{scope}[shift={(0,-0.2)}]
            \node (nameK2) at (-1.9,2.6) {$K_{(k+1)(w+1)}$};
            \node (nameK3) at (1.9,2.6) {$K_{(k+1)(w+1)}$};
        \end{scope}
        
        \begin{scope}[every node/.style={regular polygon, regular polygon sides=3, minimum size=1.5cm, very thick,draw, lightgray}]
            \node (G1) at (-2.3,0) {};
            \node (G2) at (-0.7,0) {};
            \node (G3) at (0.7,0) {};
            \node (G4) at (2.3,0) {};
        \end{scope}
    
        \begin{scope}[shift={(0,0.1)}]
            \node (nameG1) at (-2.3,-0.6) {$G^{k}_{w,p_{w,k}}$};
            \node (nameG2) at (-0.7,-0.6) {$G^{k}_{w,p_{w,k}}$};
            \node (nameG3) at (0.7,-0.6) {$G^{k}_{w,p_{w,k}}$};
            \node (nameG4) at (2.3,-0.6) {$G^{k}_{w,p_{w,k}}$};
        \end{scope}

        \begin{scope}[every edge/.style={draw}]
            \foreach \i in {2,3} {
                \foreach \j in {2,3} {
                    \path [-] (u\i) edge (v\j);
                    \path [-] (u\i) edge (w\j);
                }
                \foreach \j in {2,3,5,6} {
                    \path [-] (u\i) edge (x\j);
                    \path [-] (v\i) edge (y\j);
                    \path [-] (u\i) edge (y\j);
                    \path [-] (w\i) edge (x\j);
                }
            }
        \end{scope}
    \end{tikzpicture}
    
    \caption{$G^{k}_{w,p}$ when $m=2$}
    \label{fig: example of Gkwp}
\end{figure}

For the remainder of this section, let $w:=\lceil\log_{3/2}p\rceil$ and consider $p$ large. Note that $p\approx(\frac{3}{2})^{w}$ and $p_{w,k}\leq w+\log_{3/2}(6k+12)$ by \Cref{fact: pwk is linear in w}, thus $p\geq p_{w,k}$ for all large enough $p$.

\begin{lemma}\label{lem: estimating edges in Gkwp}
    $|E(G^{k}_{w,p})|\leq5k2^{p}p\log_{3/2} p+\bigo(k2^{p}p)$.
\end{lemma}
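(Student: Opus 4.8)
The plan is to follow the proof of \Cref{lem: estimating edges in Gwp} almost verbatim, making the substitutions $K_{w+1}\to K_{(k+1)(w+1)}$, $G_{w,p}\to G^{k}_{w,p}$, $\delta(w)\to\delta_{k}(w)$ and $p_{w}\to p_{w,k}$, and invoking the $k$-indexed facts \Cref{fact: deltak upper bound,fact: exponential of pwk} and \Cref{lem: general estimate for vertices in Gkwp} in place of their unsubscripted analogues (recall $m:=p-p_{w,k}$ and $w:=\lceil\log_{3/2}p\rceil$, with $p$ large). First I would partition $E(G^{k}_{w,p})$ into three types, as depicted in \Cref{fig: example of Gkwp}: (a) the edges inside the $2^{m}-1$ copies of $K_{(k+1)(w+1)}$, of which there are $(2^{m}-1)\binom{(k+1)(w+1)}{2}$; (b) the edges inside the $2^{m}$ copies of the base graph $G^{k}_{w,p_{w,k}}=K_{\lceil\delta_{k}(w)\rceil}$, of which there are $2^{m}\binom{\lceil\delta_{k}(w)\rceil}{2}\leq 2^{m}(8k+16)^{2}(\frac{3}{2})^{2w}$ by \Cref{fact: deltak upper bound}; and (c) the ``downward'' edges from each copy of $K_{(k+1)(w+1)}$ to everything lying below it in the recursion, which over the whole recursion number $\sum_{i=1}^{m}2^{i}(k+1)(w+1)\,|V(G^{k}_{w,p-i})|$.

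The dominant contribution is (c). Here I would use \Cref{lem: general estimate for vertices in Gkwp} in the form $2^{i}|V(G^{k}_{w,p-i})|\leq 2^{i}\cdot 5\cdot 2^{p-i}=5\cdot 2^{p}$ (valid since $p-i\geq p_{w,k}$), so that (c) is at most $5\cdot 2^{p}(k+1)(w+1)m\leq 5\cdot 2^{p}(k+1)(w+1)p$ using $m\leq p$. For (a) and (b) I would apply \Cref{fact: exponential of pwk}, which gives $2^{m}=2^{p}\cdot 2^{-p_{w,k}}\leq\frac{1}{2k+4}\,2^{p}(\frac{2}{3})^{w}$; combined with $w=\lceil\log_{3/2}p\rceil$ (so $(\frac{2}{3})^{w}\leq\frac{1}{p}$ and $(\frac{3}{2})^{w}\leq\frac{3}{2}p$), a direct estimate shows $2^{m}\binom{(k+1)(w+1)}{2}=\bigo(k2^{p})$ — the factor $\frac{1}{2k+4}$ absorbs the $(k+1)^{2}$, and $(\frac{2}{3})^{w}(w+1)^{2}=o(1)$ handles the rest — and similarly $2^{m}(8k+16)^{2}(\frac{3}{2})^{2w}=\bigo(k2^{p}p)$. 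Thus (a) and (b) are both $\bigo(k2^{p}p)$, and collecting the three bounds and using $w\leq\log_{3/2}p+1$ yields $|E(G^{k}_{w,p})|\leq 5k2^{p}p\log_{3/2}p+\bigo(k2^{p}p)$, with $5k$ the (non-optimised) leading constant, exactly as in \Cref{lem: estimating edges in Gwp}.

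There is no conceptual obstacle here: the proof is a transcription of that of \Cref{lem: estimating edges in Gwp}. The only thing needing care is the bookkeeping — identifying the three edge types correctly (in particular setting up the geometric sum $\sum_{i=1}^{m}2^{i}(k+1)(w+1)|V(G^{k}_{w,p-i})|$ for the downward edges, and the observation that $2^{i}|V(G^{k}_{w,p-i})|$ is uniformly $\bigo(2^{p})$), and checking that the extra $k$-dependence introduced by $\binom{(k+1)(w+1)}{2}$ and $\binom{\lceil\delta_{k}(w)\rceil}{2}$ is swamped by the $2^{-p_{w,k}}$ factor from \Cref{fact: exponential of pwk}, so that these two terms genuinely remain of lower order than $5k2^{p}p\log_{3/2}p$.
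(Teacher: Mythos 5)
Your proposal reproduces the paper's own proof essentially verbatim: the same decomposition of the edge set into edges inside the $K_{(k+1)(w+1)}$ copies, edges inside the $K_{\lceil\delta_{k}(w)\rceil}$ base copies, and the downward edges summed geometrically, bounded via \Cref{lem: general estimate for vertices in Gkwp}, \Cref{fact: deltak upper bound}, and \Cref{fact: exponential of pwk} exactly as in the paper. The bookkeeping and the identification of $5k2^{p}pw$ as the dominant term match the paper's argument, so this is the same proof.
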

\begin{proof}
    Each of the $2^{m}-1$ copies of $K_{(k+1)(w+1)}$ in $G^{k}_{w,p}$ contains $\binom{(k+1)(w+1)}{2}$ edges. Each of the $2^{m}$ copies of $G^{k}_{w,p_{w,k}}$ in $G^{k}_{w,p}$ contains $\binom{\lceil\delta_{k}(w)\rceil}{2} \leq (8k+16)^{2}(\frac{3}{2})^{2w}$ edges by \Cref{fact: deltak upper bound}. The contribution of the `downward' edges from each $K_{(k+1)(w+1)}$ (see \Cref{fig: example of Gkwp}) is the final term in:
    \begin{align*}
        |E(G^{k}_{w,p})|&\leq(2^{m}-1) 
        \binom{(k+1)(w+1)}{2}+2^{m}\cdot(8k+16)^{2}\Big(\frac{3}{2}\Big)^{2w}\\
        &+\sum_{i=1}^{m}2^{i}(k+1)(w+1)|V(G^{k}_{w,p-i})|\;.
    \end{align*}
    By \Cref{lem: general estimate for vertices in Gkwp} it follows that, for each $i\in\{1,\ldots,m\}$, $2^{i}|V(G^{k}_{w,p-i})|\leq5\cdot2^{p}$. Therefore, 
    \begin{align*}
        |E(G^{k}_{w,p})|<2^{m} \binom{(k+1)(w+1)}{2} +2^{m}\cdot(8k+16)^{2}\Big(\frac{3}{2}\Big)^{2w}+5\cdot 2^{p}m(k+1)(w+1)\;.
    \end{align*}
    Since $m\leq p$ and $2^{m}\leq2^{p}\frac{1}{2k+4}(\frac{2}{3})^{w}$ by \Cref{fact: exponential of pwk},
    \begin{align*}
        |E(G^{k}_{w,p})|\leq\frac{2^{p}}{2k+4}\Big(\frac{2}{3}\Big)^{w} \binom{(k+1)(w+1)}{2} + 2^{p}\frac{(8k+16)^{2}}{2k+4}\Big(\frac{3}{2}\Big)^{w}+5\cdot 2^{p}p(k+1)(w+1)\;.
    \end{align*}
    Since $w=\lceil\log_{3/2}p\rceil$, $5k2^{p}pw$ is the dominating function in the above inequality. The lower order terms are $\bigo(k2^{p}p)$. It follows that
    \begin{align*}
        |E(G^{k}_{w,p})|\leq5k2^{p}pw+\bigo(k2^{p}p)\leq5k2^{p}p\log_{3/2} p+\bigo(k2^{p}p)
    \end{align*}
    as claimed.
\end{proof}

For the remainder of this section, let $p:=\lceil\log_{2}(2n)\rceil$ and consider $n$ large relative to $k$.

\begin{theorem}
The graph $G^{k}_{w,p}$ satisfies 
     $|V(G^{k}_{w,p})|\leq20n$ and  $|E(G^{k}_{w,p})|\leq 20kn(\log_{2} n)(\log_{3/2}\log_{2} n)+\bigo(kn\log n)$. Moreover, $G^{k}_{w,p}$ contains every graph with $n$ vertices and treewidth at most $k$.
\end{theorem}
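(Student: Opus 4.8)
The plan is to repeat the proof of \Cref{thm: estimating vertices and edges in Gwp in terms of n} almost verbatim, with the treewidth-$k$ ingredients in place of their tree counterparts. Throughout, $p:=\lceil\log_{2}(2n)\rceil$ and $w:=\lceil\log_{3/2}p\rceil$ are fixed as above; since $p_{w,k}\leq w+\log_{3/2}(6k+12)$ by \Cref{fact: pwk is linear in w} while $p=\Theta(\log n)$ and $w=\Theta(\log\log n)$, we have $p\geq p_{w,k}$ for all large $n$, so $G^{k}_{w,p}$ is well-defined and \Cref{thm: construction proof for Gkwp}, \Cref{lem: general estimate for vertices in Gkwp} and \Cref{lem: estimating edges in Gkwp} all apply.

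For the order, note $2^{p}=2^{\lceil\log_{2}(2n)\rceil}\leq 4n$, so \Cref{lem: general estimate for vertices in Gkwp} gives $|V(G^{k}_{w,p})|\leq 5\cdot 2^{p}\leq 20n$. For the size, \Cref{lem: estimating edges in Gkwp} gives $|E(G^{k}_{w,p})|\leq 5k2^{p}p\log_{3/2}p+\bigo(k2^{p}p)$; substituting $2^{p}\leq 4n$ and $p\leq\log_{2}n+2$ turns the leading term into $5k\cdot 4n\cdot(\log_{2}n+2)\log_{3/2}(\log_{2}n+2)=20kn(\log_{2}n)(\log_{3/2}\log_{2}n)+\bigo(kn\log n)$, and the $\bigo(k2^{p}p)$ term is absorbed into $\bigo(kn\log n)$.

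It remains to show $G^{k}_{w,p}$ contains every $n$-vertex graph of treewidth at most $k$; by \Cref{thm: construction proof for Gkwp} it suffices to verify $\gamma(w,p)\geq n$. This computation is identical to the one ending the proof of \Cref{thm: estimating vertices and edges in Gwp in terms of n} and does not involve $k$: from $2+(\tfrac{2}{3})^{w}>(2-(\tfrac{2}{3})^{w})(1+(\tfrac{2}{3})^{w})$ one gets $\tfrac{2+(2/3)^{w}}{1+(2/3)^{w}}\geq 2-(\tfrac{2}{3})^{w}$; Bernoulli's inequality together with $(\tfrac{3}{2})^{w}\geq p$ (as $w=\lceil\log_{3/2}p\rceil$) gives $(1-\tfrac{1}{2}(\tfrac{2}{3})^{w})^{p}\geq 1-\tfrac{p}{2}(\tfrac{2}{3})^{w}\geq\tfrac{1}{2}$; multiplying by $2^{p}$ yields $(2-(\tfrac{2}{3})^{w})^{p}\geq 2^{p-1}\geq n$, where the last inequality uses $2^{p}\geq 2n$. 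Hence $\gamma(w,p)=\bigl(\tfrac{2+(2/3)^{w}}{1+(2/3)^{w}}\bigr)^{p}\geq 2^{p-1}\geq n$, and \Cref{thm: construction proof for Gkwp} finishes the argument.

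I do not anticipate any genuine obstacle: each step transcribes the tree argument with $\beta,\delta,p_{w}$ replaced by $\beta_{k},\delta_{k},p_{w,k}$, and the only arithmetic change --- the constant $2k+4$ in place of $3$ --- has already been absorbed in \Cref{lem: general estimate for vertices in Gkwp} and \Cref{lem: estimating edges in Gkwp}, which is precisely why the constant $5$ and the $\bigo(\cdot)$ forms carry over unchanged. The single point needing a line of care is the well-definedness check $p\geq p_{w,k}$ noted above.
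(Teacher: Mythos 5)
Your proof is correct and follows the paper's argument exactly: apply \Cref{lem: general estimate for vertices in Gkwp} and \Cref{lem: estimating edges in Gkwp} with $p=\lceil\log_{2}(2n)\rceil$ and $w=\lceil\log_{3/2}p\rceil$ to get the vertex and edge bounds, then reuse the $\gamma(w,p)\geq n$ calculation from \Cref{thm: estimating vertices and edges in Gwp in terms of n} to invoke \Cref{thm: construction proof for Gkwp}. The only difference is that you spell out the arithmetic with $2^{p}\leq 4n$ and $p\leq\log_{2}n+2$ a bit more explicitly, which is fine.
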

\begin{proof}
    By \Cref{lem: general estimate for vertices in Gkwp}, $|V(G^{k}_{w,p})|\leq5\cdot 2^{\lceil\log_{2}(2n)\rceil}\leq20n$. By \Cref{lem: estimating edges in Gkwp},
    \begin{align*}
        |E(G^{k}_{w,p})|&\leq5k2^{\lceil\log_{2}(2n)\rceil}\cdot\lceil\log_{2}(2n)\rceil\cdot\log_{3/2}\lceil\log_{2}(2n)\rceil+\bigo(k2^{\lceil\log_{2}(2n)\rceil}\cdot\lceil\log_{2}(2n)\rceil)\\
        &\leq20kn(\log_{2} n)(\log_{3/2}\log_{2} n)+\bigo(kn\log n)\;.
    \end{align*}
    Furthermore, $\gamma(w,p)\geq n$ by the same calculation in \Cref{thm: estimating vertices and edges in Gwp in terms of n}. Therefore, \Cref{thm: construction proof for Gkwp} implies that $G^{k}_{w,p}$ contains every graph with $n$ vertices and treewidth at most $k$.
\end{proof}

\begin{corollary}
    $s_k(n)\leq\bigo(kn(\log n)(\log\log n))$.
\end{corollary}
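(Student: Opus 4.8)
The plan is simply to package the theorem proved immediately above. Recall that $s_k(n)$ is by definition the minimum number of edges in a graph that contains every $n$-vertex graph of treewidth at most $k$, so exhibiting any one such universal graph $U$ yields $s_k(n)\leq|E(U)|$. All the work of producing $U$ and counting its edges has already been done, so the proof should be a one-line deduction.

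First I would take $U:=G^{k}_{w,p}$ with the parameters fixed in this subsection, namely $p:=\lceil\log_{2}(2n)\rceil$ and $w:=\lceil\log_{3/2}p\rceil$. By the preceding theorem (which itself combines \Cref{thm: construction proof for Gkwp} for containment with \Cref{lem: estimating edges in Gkwp} after substituting $p=\lceil\log_{2}(2n)\rceil$), the graph $U$ contains every $n$-vertex graph of treewidth at most $k$ and satisfies
\[
  |E(U)|\;\leq\;20kn(\log_{2}n)(\log_{3/2}\log_{2}n)+\bigo(kn\log n).
\]
Hence $s_k(n)\leq|E(U)|$, and I would then simplify the right-hand side: changing the base of a logarithm only rescales by a constant factor, and the additive $\bigo(kn\log n)$ term is of lower order than the main term, so the whole expression is $\bigo(kn(\log n)(\log\log n))$. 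This is exactly the claimed bound, with the asymptotic notation read in the regime $n\gg k$ in which the construction is valid (the convention announced in the abstract).

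I do not expect any real obstacle here: this corollary stands to the preceding theorem exactly as the corollary $s(n)\leq\bigo(n(\log n)(\log\log n))$ in \Cref{TreeUpperBound} stands to \Cref{thm: estimating vertices and edges in Gwp in terms of n}. The only point deserving a moment's care is compatibility of hypotheses — the theorem is stated for $n$ large relative to $k$, which is precisely the regime in which the $\bigo(\cdot)$ of the corollary is to be understood — so no separate treatment of bounded $n$ is needed.
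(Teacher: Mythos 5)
Your proposal is exactly the intended (and only sensible) argument: the corollary is an immediate restatement of the preceding theorem, with the base changes of the logarithms and the lower-order $\bigo(kn\log n)$ term absorbed into the $\bigo(\cdot)$. The paper itself treats the corollary as following directly from the theorem with no further proof needed.
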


\renewcommand\bibsection{\subsection*{\refname}} 

\bibliographystyle{unsrtnat}
\bibliography{references} 

\end{document}